\newcommand{\stkout}[1]{\ifmmode\text{\sout{\ensuremath{#1}}}\else\sout{#1}\fi}
\newtheorem{lemma}{Lemma}[section]
\newtheorem{theorem}{Theorem}[section]
\theoremstyle{definition}
\newtheorem{definition}{Definition}[section]
\theoremstyle{remark}
\newtheorem{remark}{Remark}[section]
\numberwithin{theorem}{section}
\numberwithin{equation}{section}
\crefname{section}{Section}{Sections}
\crefname{subsection}{Section}{Sections}
\crefname{condition}{Condition}{Conditions}
\crefname{hypothesis}{Hypothesis}{Conditions}
\crefname{assumption}{Assumption}{Assumptions}
\crefname{lemma}{Lemma}{Lemmas}
\crefname{fact}{Fact}{Facts}
\Crefname{figure}{Figure}{Figures}
\newcommand{\vertiii}[1]{{\left\vert\kern-0.25ex\left\vert\kern-0.25ex\left\vert #1 
    \right\vert\kern-0.25ex\right\vert\kern-0.25ex\right\vert}}
\newcommand{\Uadm}{\mathfrak U}
\newcommand{\Act}{\mathbb{U}}
\newcommand{\Usm}{\mathfrak U_{\mathsf{sm}}}
\newcommand{\Um}{\mathfrak U_{\mathsf{m}}}
\newcommand{\pV}{\mathrm{V}} % Probability measures space
\newcommand{\pv}{\mathrm{v}} % Probability measures
\newcommand{\sB}{{\mathscr{B}}}  % Ball
\newcommand{\cC}{{\mathcal{C}}}   % Continuous functions
\newcommand{\sE}{{\mathscr{E}}} 
\newcommand{\sF}{{\mathfrak{F}}}   % sigma field
\newcommand{\cJ}{{\mathcal{J}}}  % discounted cost
\newcommand{\sL}{{\mathscr{L}}}  % 
\newcommand{\Lp}{{L}}            % Lp
\newcommand{\Lyap}{{\mathcal{V}}}  % Lyapunov
\newcommand{\RR}{\mathds{R}}
\newcommand{\NN}{\mathds{N}}
\newcommand{\Rd}{{\mathds{R}^{d}}}
\DeclareMathOperator{\Exp}{\mathbb{E}}
\newcommand{\D}{\mathrm{d}}
\newcommand{\Ind}{\mathds{1}}   % indicator function
\newcommand{\cD}{\mathcal{D}} %domain
\newcommand{\Sob}{{\mathscr W}}    % Sobolev Space
\newcommand{\Sobl}{{\mathscr W}_{\text{loc}}} % Sobolev Space(local)
\newcommand{\df}{:=}
\newcommand{\transp}{^{\mathsf{T}}}
\DeclareMathOperator*{\osc}{osc}
\DeclareMathOperator*{\trace}{Tr}
\DeclareMathOperator*{\diam}{diam}
\DeclareMathOperator*{\argmin}{arg\,min}
\newcommand{\sorder}{{\mathfrak{o}}}
\newcommand{\grad}{\nabla}
\newcommand{\uuptau}{{\Breve\uptau}}
\newcommand{\abs}[1]{\lvert#1\rvert}
\newcommand{\norm}[1]{\lVert#1\rVert}
\definecolor{dmagenta}{rgb}{.4,.1,.5}
\definecolor{dblue}{rgb}{.0,.0,.5}
\definecolor{mblue}{rgb}{.0,.0,.7}
\definecolor{ddblue}{rgb}{.0,.0,.4}
\definecolor{dred}{rgb}{.7,.0,.0}
\definecolor{dgreen}{rgb}{.0,.5,.0}
\definecolor{Eeom}{rgb}{.0,.0,.5}
\begin{document}
\title[Near optimality of quantized policies]
{Continuity of Cost in Borkar Control Topology and Implications on Discrete Space and Time Approximations for Controlled Diffusions under Several Criteria}

\author[Somnath Pradhan]{Somnath Pradhan$^\dag$}
\address{$^\dag$Department of Mathematics and Statistics,
Queen's University, Kingston, ON, Canada}
\email{sp165@queensu.ca}

\author[Serdar Y\"{u}ksel]{Serdar Y\"{u}ksel$^{\ddag}$}
\address{$^\ddag$Department of Mathematics and Statistics,
Queen's University, Kingston, ON, Canada}
\email{yuksel@queensu.ca}

%%%%%%%%%%%%%%%%%%%%%%%%%%%%%%%%%%%%%%%%%%%%%%%%%%%%%%%%%%%%%%%%%%%%%%%%%%%%%%%%
\begin{abstract}
We first show that the discounted cost, cost up to an exit time, and ergodic cost involving controlled non-degenerate diffusions are continuous on the space of stationary control policies when the policies are given a topology introduced by Borkar [V. S. Borkar, A topology for markov controls, Applied Mathematics and Optimization 20 (1989), 55–62]. The same applies for finite horizon problems when the control policies are Markov and the topology is revised to include time also as a parameter. We then establish that finite action/piecewise constant stationary policies are dense in the space of stationary Markov policies under this topology and the same holds for continuous policies. Using the above mentioned continuity and denseness results we establish that finite action/piecewise constant policies approximate optimal stationary policies with arbitrary precision. This gives rise to the applicability of many numerical methods such as policy iteration and stochastic learning methods for discounted cost, cost up to an exit time, and ergodic cost optimal control problems in continuous-time. For the finite-horizon setup, we establish additionally near optimality of time-discretized policies by an analogous argument. We thus present a unified and concise approach for approximations directly applicable under several commonly adopted cost criteria.
\end{abstract}
%%%%%%%%%%%%%%%%%%%%%%%%%%%%%%%%%%%%%%%%%%%%%%%%%%%%%%%%%%%%%%%%%%%%%%%%%%%%%%%%
\keywords{Controlled diffusions, Near optimality, Piecewise constant policy, Finite actions, Hamilton-Jacobi-Bellman equation}

\subjclass[2000]{Primary 93E20, 60J60, Secondary 35Q93}

%%%%%%%%%%%%%%%%%%%%%%%%%%%%%%%%%%%%%%%%%%%%%%%%%%%%%%%%%%%%%%%%%%%%%%%%%%%%%%%%
\maketitle
%\tableofcontents
%%%%%%%%%%%%%%%%%%%%%%%%%%%%%%%%%%%%%%%%%%%%%%%%%%%%%%%%%%%%%%%%%%%%%%%%%%%%%%%%

\section{Introduction}

In this paper, we study regularity properties of induced cost (under several criteria) on a controlled diffusion process with respect to a control topology defined by Borkar \cite{Bor89}, and implications of these properties on existence and, in particular, approximations for optimal controlled diffusions. We will arrive at very general approximation results for optimal control policies by quantized (finite action / piecewise constant) stationary control policies for a general class of controlled diffusions in the whole space $\Rd$\, as well as time-discretizations for the criteria with finite horizons. 

Such a problem is of significant practical consequence, and accordingly has been studied extensively in a variety of setups. Due to its wide range of applications in domains that spans from  mathematical ﬁnance, large deviations and robust control, vehicle and mobile robot control and several other ﬁelds, the stochastic optimal control problems for controlled diffusions have been studied extensively in literature see, e.g., \cite{Bor-book}, \cite{HP09-book} (finite horizon cost) \cite{BS86}, \cite{BB96} (discounted cost) \cite{Ari-12}, \cite{AA13}, \cite{BG90}, \cite{BG88I}, \cite{BG90b}, \cite{ABG-book} (ergodic cost) and references therein\,. Typically, there are two main approaches to deal with these problems. The first one is the Bellman's Dynamic Programming Principal (DPP). The DPP approach allows one to characterize the value function of the optimal control problem as the unique solution of the associated Hamilton-Jacobi-Bellman (HJB) equation \cite{Bor-book}, \cite{HP09-book}, \cite{ABG-book}, \cite{Lions-83A}, \cite{Lions-83B}. The second one is Pontryagin maximum principal (in the stochastic framework) \cite{PBGM62}\,.     

For numerical methods as well as learning theoretic methods, it is imperative to arrive at rigorous approximation results.

In the continuous-time literature, most of the approximation results are build on time-discretization and mainly focused on finite horizon or discounted cost criteria see, e.g., \cite{KD92}, \cite{KH77}, \cite{KH01}, \cite{KN98A}, \cite{KN2000A}, \cite{BR-02}, \cite{BJ-06}\,, though the ergodic control and control up to an exit time criteria have also been studied \cites{KD92,kushner2014partial}. 

For finite horizon criteria, a commonly adopted approach of approximating controlled diffusions by a sequence of discrete time Markov chain via weak convergence methods was studied by Kushner and Kushner and Dupuis, see \cite{KD92}, \cite{KH77}, \cite{KH01}\,. These works deal with numerical procedures to construct near optimal  control policies for controlled diffusion models by approximating the space of (open-loop adapted) relaxed control policies with those that are piece-wise constant, and by considering the weak convergence of approximating probability measures on the path space to the measure on the continuous-time limit. It is shown in \cite{KD92}, \cite{KH77}, \cite{KH01} that if the constructed controlled Markov chain satisfies a certain ``consistency" condition at the discrete-time sampling instants, then the state process and the corresponding value function asymptotically approximates the continuous time state process and the associated value function. This approach has been referred to as the {\it weak convergence} approach. 

In an alternative program, building on finite difference approximations for Bellman's equations utilizing their regularity properties, Krylov \cite{KN98A}, \cite{KN2000A} established the convergence rate of for such approximation techniques, where finite difference approximations are studied to arrive at stability results. In particular, some estimates for the error bound of the finite-difference approximation schemes in the problem of finding viscosity or probabilistic solutions to degenerate Bellman's equations are established. The proof technique is based on mean value theorems for stochastic integrals (as in \cite{KN2001AA}), obtained on the basis of elementary properties of the associated Bellman's equations\,. Also, for controlled non-degenerate diffusion processes, it is shown in \cite{KN99AA} that using policies which are constant on intervals of length $h^2$, one can approximate the value function with errors of order $h^{\frac{1}{3}}$\,. In \cite{BR-02}, \cite{BJ-06} Barles et. al. improved the error bounds obtained in \cite{KN98A}, \cite{KN2000A}, \cite{KN99AA}\,. 

Borkar \cite{Bor89}, \cite{Bor-book}, for the finite-horizon cost case pursued an alternative approach to show continuity (when only stationary state feedback policies are considered for finite horizon problems) in his newly introduced topology; he studied the dependence of the strategic measures (on the path space) on the control policy, via regularity properties of generator functions. Additionally, Borkar \cite{Bor89} did not study the implications in approximations.  

Instead of the approaches adopted in the aforementioned studies, in this paper, utilizing regularity results of the associated Poisson equations via PDE theory, we arrive at continuity results under relatively weaker set of assumptions on the diffusion coefficients (with the exception of Krylov's method, which is tailored for finite horizon problems). Our approach allows one to arrive at a unification of approximation methods for finite horizon criterion, infinite discounted criterion, control up to an exit time, and ergodic cost criterion problems. Accordingly, our primary approach is to utilize the regularity properties of the partial differential equations directly, first via uniqueness of solutions, and then via regularity properties of the solutions to establish consistency of optimality equations satisfied by the limits of solutions (as policies converge). We will see that one can obtain rather concise, direct, and general results. 

Additionally, our results can be used to present weaker conditions under which the weak convergence methods can be applicable or when discretized approximations can be shown to be near optimal: For example it will be a consequence of our analysis that for many of the criteria one can utilize piece-wise continuous or continuous control policies for near optimality, which implies \cite[Assumption A2.3, pp. 322]{KD92} used for approximations under ergodic cost criteria (where invariant measures under sampled chains can be shown to converge to the invariant measure of a continuous-time limit as discretization gets finer). Furthermore, we do not impose uniform boundedness conditions on the drift term or (uniform) Lipschitz continuity conditions, a common assumption in \cite{KD92}, \cite{KH77}, \cite{KH01}, \cite{KN98A}, and \cite{KN2000A}.

As noted above, the study of the finite action/piecewise constant approximation problem plays important role in computing near optimal policies and learning algorithms for controlled diffusions in $\Rd$\,. As it is pointed out in \cite{RF-15N}, \cite{JPR-19P}, piecewise constant policies are also useful in numerical methods for solving HJB equations. The computational advantage comes from the fact that over the intervals in which the policy is constant, we have to only solve the linear PDEs\,.  In the continuous time setup learning problems become much more involved due to the complex structure of the dynamics and the optimality equation. One common approach to overcome these difficulties is to construct simpler models by discretizing time, space and action spaces which approximates the original continuous time model\,. In a recent work \cite{BK-22R}, the authors studied an approximate $Q$-learning algorithm for controlled diffusion models by discretizing the time, space and action spaces. Under mild assumptions, they produced a learning algorithm which converges to some approximately optimal control policy for a discounted cost problem. They assumed that the discretization is uniform in time but the discretization in state and action can be non-uniform\,. Similar learning algorithm for controlled diffusions is proposed in \cite{RP-97B}, this result is based on the finite difference and finite element approximations (as in,\cite{KD92})\,. Thus, if one can establish that learning a control model with finitely many control actions is sufficient for the approximate optimality, then it will be easier to produce efficient learning algorithms for the original model\,.

In the literature of discrete time Markov decision processes (MDPs), various approximation techniques are available to address the approximation problems, e.g., approximate dynamic programming, approximate value or policy iteration, approximate linear programming, simulation based techniques, neuro-dynamic programming (or reinforcement learning), state aggregation, etc. (see \cite{Bertsekas1975A}, \cite{BVR-06}, \cite{DP-13SIAM}, \cite{SLS-18B} and the references therein)\,. For discrete time controlled models the near optimality of quantized policies studied extensively in the literature see, e.g., \cite{KY22A}, \cite{SYL17}, \cite{SSL-16}, \cite{SST-20A}, \cite{SLS-18B}   \,. In \cite{SYL17}, \cite{SSL-16}, authors studied the finite state, finite action approximation (respectively) of fully observed MDPs with Borel state and action spaces, for both discounted and average costs criteria\,. In the compact state space case explicit rate of convergence is also established in \cite{SYL17}\,. Later, these results are extended to partially observed Markov decision process setup in \cite{KY22A}, \cite{SST-20A}, also see the references therein\,. Recently, \cite[Section 4]{arapostathis2021optimality} established the denseness of the performance of deterministic policies with finite action spaces, among the performance values attained by the set of all randomized stationary policies.

\subsection*{Contributions and main results} In this manuscript our main goal is to study the following approximation problem: for a general class of controlled diffusions in $\Rd$ under what conditions one can approximate the optimal control policies for both finite/infinite horizon cost criteria by policies with finite actions/ piecewise constant/continuous policies? While the time discretization approximation results for finite horizon problems, studied extensively by Krylov \cite{KN98A}, \cite{KN2000A}, \cite{KN99AA} (for degenerate diffusions), we will discuss this (for the non-degenerate case) as an application our results. 

In order to address these questions, we first show that both finite horizon and infinite horizon (discounted/ergodic) costs are continuous as a function of control policies under Borkar topology \cite{Bor89}. We establish these results by exploiting the existence and uniqueness results of the associated Poisson equations (see,  Theorem~\ref{TContFHC} (finite horizon), Theorem~\ref{T1.1} (discounted), Theorem~\ref{T1.1Exit} (control up to an exit time), Theorem~\ref{ergodicnearmono1}, \ref{ergodicLyap1} (ergodic)). The  analysis of ergodic cost case is relatively more involved. One of the major issues in analyzing the ergodic cost criteria under the near-monotone hypothesis is the non-uniqueness/restricted uniqueness of the solution of the associated HJB/Poisson equation (see, \cite[Example~3.8.3]{ABG-book},\cite{AA13})\,. In \cite[Example~3.8.3]{ABG-book},\cite{AA13} it is shown that under near-monotone hypothesis the associated HJB/Poisson equation may admit uncountable many solutions\,. In this paper, we have shown that under near-monotone hypothesis the associated Poisson equation admits unique solution in the space of compatible solution pairs (see, \cite[Definition~1.1]{AA13})\,. Continuity results obtained in the paper will be also useful in establishing the existence of optimal policies of the corresponding optimal control problems. 

Next, utilizing the Lusin's theorem and Tietze's extension theorem we show that under Borkar topology, quantized (finite actions/ piecewise constant) stationary policies are dense in the space of stationary Markov policies (see, Section~\ref{DensePol})\,. Also, following the analogous proof technique, we establish the denseness of space continuous stationary polices in the space of stationary policies (see Theorem~\ref{TDContP})\,. 

Following and briefly modifying the proof technique of the denseness of stationary policies, including time also as a parameter we establish that piecewise constant Markov policies are dense in the space of Markov policies under Borkar topology (see, Theorem~\ref{TDPCMP}). 

Then, using our continuity and denseness results, we deduce that for both finite and infinite horizon cost criteria, the optimal control policies can be approximated by quantized (finite actions/ piecewise constant) policies with arbitrary precision (see, Theorem~\ref{TFiniteOptApprox1} (finite horizon), Theorem~\ref{T1.2ExitCost} (control upto an exit time), Theorem~\ref{ErgodNearmOPT1}, \ref{TErgoOptApprox1} (infinite horizon)).

%The key results of this paper can be roughly outline as follows: 

The remaining part of the paper is organized as follows. In Section~\ref{PD} we provide the problem formulation\,. The continuity of discounted cost/ cost up to an exit time as a function of control policy are proved in Section~\ref{CDiscCost}. Similar continuity result for ergodic cost is presented in Section~\ref{CErgoCost}, where we establish these results under two types of condition; stability or near-monotonicity. Section~\ref{DensePol} is devoted to establish the denseness of finite action/piecewise constant stationary policies under Borkar topology. Then using the denseness and continuity results we show the near optimality of finite models for cost up to an exit time and discounted/ ergodic cost criteria in Section~\ref{NOptiFinite}. Finally, in Section~\ref{TimeDMarkov}, we analyze the denseness of piecewise constant Markov policies under Borkar topology and then exploiting the denseness result we prove the near optimality of the piecewise constant Markov policies for finite horizon cost criterion.  
%%%%%%%%%%%%%%%%%%%%%%%%%%%%%%%%%%%%%%%%%%%%%%%%%%%%%%%%%%%%%%%%%%%%%%%%%%%%%%%%
\subsection*{Notation:}
\begin{itemize}
\item For any set $A\subset\RR^{d}$, by $\uptau(A)$ we denote \emph{first exit time} of the process $\{X_{t}\}$ from the set $A\subset\RR^{d}$, defined by
\begin{equation*}
\uptau(A) \,\df\, \inf\,\{t>0\,\colon X_{t}\not\in A\}\,.
\end{equation*}
\item $\sB_{r}$ denotes the open ball of radius $r$ in $\RR^{d}$, centered at the origin,
\item $\uptau_{r}$, $\uuptau_{r}$ denote the first exist time from $\sB_{r}$, $\sB_{r}^c$ respectively, i.e., $\uptau_{r}\df \uptau(\sB_{r})$, and $\uuptau_{r}\df \uptau(\sB^{c}_{r})$.
\item By $\trace S$ we denote the trace of a square matrix $S$.
\item For any domain $\cD\subset\RR^{d}$, the space $\cC^{k}(\cD)$ ($\cC^{\infty}(\cD)$), $k\ge 0$, denotes the class of all real-valued functions on $\cD$ whose partial derivatives up to and including order $k$ (of any order) exist and are continuous.
\item $\cC_{\mathrm{c}}^k(\cD)$ denotes the subset of $\cC^{k}(\cD)$, $0\le k\le \infty$, consisting of functions that have compact support. This denotes the space of test functions.
\item $\cC_{b}(\Rd)$ denotes the class of bounded continuous functions on $\Rd$\,.
\item $\cC^{k}_{0}(\cD)$, denotes the subspace of $\cC^{k}(\cD)$, $0\le k < \infty$, consisting of functions that vanish in $\cD^c$.
\item $\cC^{k,r}(\cD)$, denotes the class of functions whose partial derivatives up to order $k$ are H\"older continuous of order $r$.
\item $\Lp^{p}(\cD)$, $p\in[1,\infty)$, denotes the Banach space
of (equivalence classes of) measurable functions $f$ satisfying
$\int_{\cD} \abs{f(x)}^{p}\,\D{x}<\infty$.
\item $\Sob^{k,p}(\cD)$, $k\ge0$, $p\ge1$ denotes the standard Sobolev space of functions on $\cD$ whose weak derivatives up to order $k$ are in $\Lp^{p}(\cD)$, equipped with its natural norm (see, \cite{Adams})\,.
\item  If $\mathcal{X}(Q)$ is a space of real-valued functions on $Q$, $\mathcal{X}_{\mathrm{loc}}(Q)$ consists of all functions $f$ such that $f\varphi\in\mathcal{X}(Q)$ for every $\varphi\in\cC_{\mathrm{c}}^{\infty}(Q)$. In a similar fashion, we define $\Sobl^{k, p}(\cD)$.
\item  For $\mu > 0$, let $e_{\mu}(x) = e^{-\mu\sqrt{1+\abs{x}^2}}$\,, $x\in\Rd$\,. Then $f\in \Lp^{p,\mu}((0, T)\times \Rd)$ if $fe_{\mu} \in \Lp^{p}((0, T)\times \Rd)$\,. Similarly, $\Sob^{1,2,p,\mu}((0, T)\times \Rd) = \{f\in \Lp^{p,\mu}((0, T)\times \Rd) \mid f, \frac{\partial f}{\partial t}, \frac{\partial f}{\partial x_i}, \frac{\partial^2 f}{\partial x_i \partial x_j}\in \Lp^{p,\mu}((0, T)\times \Rd) \}$ with natural norm (see \cite{BL84-book})
\begin{align*}
\norm{f}_{\Sob^{1,2,p,\mu}} =& \norm{\frac{\partial f}{\partial t}}_{\Lp^{p,\mu}((0, T)\times \Rd)} + \norm{f}_{\Lp^{p,\mu}((0, T)\times \Rd)} \\
& + \sum_{i}\norm{\frac{\partial f}{\partial x_i}}_{\Lp^{p,\mu}((0, T)\times \Rd)} + \sum_{i,j}\norm{\frac{\partial^2 f}{\partial x_i \partial x_j}}_{\Lp^{p,\mu}((0, T)\times \Rd)}\,. 
\end{align*} Also, we use the following convention $\norm{f}_{\Sob^{1,2,p,\mu}} = \norm{f}_{1,2,p,\mu}$\,.
\end{itemize}
%%%%%%%%%%%%%%%%%%%%%%%%%%%%%%%%%%%%%%%%%%%%%%%%%%%%%%%%%%%%%%%%%%%%%%%%%%%%%%%%
\section{The Borkar Topology on Control Policies, Cost Criteria, and the Problem Statement}\label{PD} Let $\Act$ be a compact metric space and $\pV=\mathscr{P}(\Act)$ be the space of probability measures on  $\Act$ with topology of weak convergence. Let $$b : \Rd \times \Act \to  \Rd, $$ $$ \sigma : \Rd \to \RR^{d \times d},\, \sigma = [\sigma_{ij}(\cdot)]_{1\leq i,j\leq d},$$ be given functions. We consider a stochastic optimal control problem whose state is evolving according to a controlled diffusion process given by the solution of the following stochastic differential equation (SDE)
\begin{equation}\label{E1.1}
\D X_t \,=\, b(X_t,U_t) \D t + \upsigma(X_t) \D W_t\,,
\quad X_0=x\in\Rd.
\end{equation}
Where 
\begin{itemize}
\item
$W$ is a $d$-dimensional standard Wiener process, defined on a complete probability space $(\Omega, \sF, \mathbb{P})$.
\item 
 We extend the drift term $b : \Rd \times \pV \to  \Rd$ as follows:
\begin{equation*}
b (x,\mathrm{v}) = \int_{\Act} b(x,\zeta)\mathrm{v}(\D \zeta), 
\end{equation*}
for $\mathrm{v}\in\pV$.
\item
$U$ is a $\pV$ valued adapted process satisfying following non-anticipativity condition: for $s<t\,,$ $W_t - W_s$ is independent of
$$\sF_s := \,\,\mbox{the completion of}\,\,\, \sigma(X_0, U_r, W_r : r\leq s)\,\,\,\mbox{relative to} \,\, (\sF, \mathbb{P})\,.$$  
\end{itemize}
The process $U$ is called an \emph{admissible} control, and the set of all admissible controls is denoted by $\Uadm$ (see, \cite{BG90}). By a Markov control we mean an admissible control of the form $U_t = v(t,X_t)$ for some Borel measurable function $v:\RR_+\times\Rd\to\pV$. The space of all Markov controls is denoted by $\Um$\,. If the function $v$ is independent of $t$, i.e., $U_t = v(X_t)$ then $U$ or by an abuse of notation $v$ itself is called a stationary Markov control. The set of all stationary Markov controls is denoted by $\Usm$.

To ensure existence and uniqueness of strong solutions of \cref{E1.1}, we impose the following assumptions on the drift $b$ and the diffusion matrix $\upsigma$\,. 
\begin{itemize}
%\item[(A1)]
\item[\hypertarget{A1}{{(A1)}}]
\emph{Local Lipschitz continuity:\/}
The function
$\upsigma\,=\,\bigl[\upsigma^{ij}\bigr]\colon\RR^{d}\to\RR^{d\times d}$,
$b\colon\Rd\times\Act\to\Rd$ are locally Lipschitz continuous in $x$ (uniformly with respect to the other variables for $b$). In other words, for some constant $C_{R}>0$
depending on $R>0$, we have
\begin{equation*}
\abs{b(x,\zeta) - b(y, \zeta)}^2 + \norm{\upsigma(x) - \upsigma(y)}^2 \,\le\, C_{R}\,\abs{x-y}^2
\end{equation*}
for all $x,y\in \sB_R$ and $\zeta\in\Act$, where $\norm{\upsigma}\df\sqrt{\trace(\upsigma\upsigma\transp)}$\,. Also, we are assuming that $b$ is jointly continuous in $(x,\zeta)$.

\medskip
\item[\hypertarget{A2}{{(A2)}}]
\emph{Affine growth condition:\/}
$b$ and $\upsigma$ satisfy a global growth condition of the form
\begin{equation*}
\sup_{\zeta\in\Act}\, \langle b(x, \zeta),x\rangle^{+} + \norm{\upsigma(x)}^{2} \,\le\,C_0 \bigl(1 + \abs{x}^{2}\bigr) \qquad \forall\, x\in\RR^{d},
\end{equation*}
for some constant $C_0>0$.

\medskip
\item[\hypertarget{A3}{{(A3)}}]
\emph{Nondegeneracy:\/}
For each $R>0$, it holds that
\begin{equation*}
\sum_{i,j=1}^{d} a^{ij}(x)z_{i}z_{j}
\,\ge\,C^{-1}_{R} \abs{z}^{2} \qquad\forall\, x\in \sB_{R}\,,
\end{equation*}
and for all $z=(z_{1},\dotsc,z_{d})\transp\in\RR^{d}$,
where $a\df \frac{1}{2}\upsigma \upsigma\transp$.
\end{itemize}

\subsection{The Borkar Topology on Control Policies}\label{B-topo}
We now introduce the Borkar topology on stationary or Markov controls \cite{Bor89}
\begin{itemize}
\item[•]\emph{Topology of Stationary Policies:}
From \cite[Section~2.4]{ABG-book}, we have that the set $\Usm$ is metrizable with compact metric.
\begin{definition}[Borkar topology of stationary Markov policies]\label{DefBorkarTopology1A}
A sequence $v_n\to v$ in $\Usm$ if and only if
\begin{equation}\label{BorkarTopology}
\lim_{n\to\infty}\int_{\Rd}f(x)\int_{\Act}g(x,\zeta)v_{n}(x)(\D \zeta)\D x = \int_{\Rd}f(x)\int_{\Act}g(x,\zeta)v(x)(\D \zeta)\D x
\end{equation}
for all $f\in L^1(\Rd)\cap L^2(\Rd)$ and $g\in \cC_b(\Rd\times \Act)$ (for more details, see \cite[Lemma~2.4.1]{ABG-book}, \cite{Bor89})\,.
\end{definition}
\item[•]\emph{Topology of Markov Policies:} In the proof of \cite[Theorem~3.1, Lemma~3.1]{Bor89}, replacing $A_n$ by $\hat{A}_n = A_n\times [0,n]$ and following the arguments as in the proof of \cite[Theorem~3.1, Lemma~3.1]{Bor89}, we have the following topology on the space of Markov policies $\Um$\,. 
\begin{definition}[Borkar topology of Markov policies]\label{BKTP1}
A sequence $v_n\to v$ in $\Um$ if and only if
\begin{equation}\label{BorkarTopologyM}
\lim_{n\to\infty}\int_{0}^{\infty}\int_{\Rd}f(t,x)\int_{\Act}g(x,t,\zeta)v_{n}(t,x)(\D \zeta)\D x \D t = \int_{0}^{\infty}\int_{\Rd}f(t,x)\int_{\Act}g(x,t,\zeta)v(x,t)(\D \zeta)\D x \D t 
\end{equation}
for all $f\in L^1(\Rd\times [0, \infty))\cap L^2(\Rd\times [0, \infty))$ and $g\in \cC_b(\Rd\times [0, \infty)\times \Act)$\,.
\end{definition}
\end{itemize}
It is well known that under the hypotheses \hyperlink{A1}{{(A1)}}--\hyperlink{A3}{{(A3)}}, for any admissible control \cref{E1.1} has a unique weak solution \cite[Theorem~2.2.11]{ABG-book}, and under any stationary Markov strategy \cref{E1.1} has a unique strong solution which is a strong Feller (therefore strong Markov) process \cite[Theorem~2.2.12]{ABG-book}.

\subsection{Cost Criteria}
Let $c\colon\Rd\times\Act \to \RR_+$ be the \emph{running cost} function. We assume that $c$ is bounded, jointly continuous in $(x, \zeta)$ and locally Lipschitz continuous in its first argument uniformly with respect to $\zeta\in\Act$. We extend $c\colon\Rd\times\pV \to\RR_+$ as follows: for $\pv \in \pV$
\begin{equation*}
c(x,\pv) := \int_{\Act}c(x,\zeta)\pv(\D\zeta)\,.
\end{equation*}
In this article, we consider the problem of minimizing finite horizon cost, $\alpha$-discounted cost and ergodic cost, respectively:
\subsubsection{Finite Horizon Cost}
For $U\in \Uadm$, the associated \emph{finite horizon cost} is given by
\begin{equation}\label{FiniteCost1}
\cJ_{T}(x, U) = \Exp_x^{U}\left[\int_0^{T} c(X_s, U_s) \D{s} + H(X_T)\right]\,.
\end{equation} and the optimal value is defined as
\begin{equation}\label{FiniteCost1Opt}
\cJ_{T}^*(x) \,\df\, \inf_{U\in \Uadm}\cJ_{T}(x, U)\,.
\end{equation}
Then a policy $U^*\in \Uadm$ is said to be optimal if we have 
\begin{equation}\label{FiniteCost1Opt1}
\cJ_{T}(x, U^*) = \cJ_{T}^*(x)\,.
\end{equation}
\subsubsection{Discounted Cost Criterion} 
For $U \in\Uadm$, the associated \emph{$\alpha$-discounted cost} is given by
\begin{equation}\label{EDiscost}
\cJ_{\alpha}^{U}(x, c) \,\df\, \Exp_x^{U} \left[\int_0^{\infty} e^{-\alpha s} c(X_s, U_s) \D s\right],\quad x\in\Rd\,,
\end{equation} where $\alpha > 0$ is the discounted factor
and $X(\cdot)$ is the solution of \cref{E1.1} corresponding to $U\in\Uadm$ and $\Exp_x^{U}$ is the expectation with respect to the law of the process $X(\cdot)$ with initial condition $x$. The controller tries to minimize \cref{EDiscost} over his/her admissible policies $\Uadm$\,. Thus, a policy $U^{*}\in \Uadm$ is said to be optimal if for all $x\in \Rd$ 
\begin{equation}\label{OPDcost}
\cJ_{\alpha}^{U^*}(x, c) = \inf_{U\in \Uadm}\cJ_{\alpha}^{U}(x, c) \,\,\, (\,=:\, \,\, V_{\alpha}(x))\,,
\end{equation} where $V_{\alpha}(x)$ is called the optimal value.
\subsubsection{Ergodic Cost Criterion}
For $U\in \Uadm$, the associated \emph{ergodic cost} is given by
\begin{equation}\label{ErgCost1}
\sE_{x}(c, U) = \limsup_{T\to \infty}\frac{1}{T}\Exp_x^{U}\left[\int_0^{T} c(X_s, U_s) \D{s}\right]\,.
\end{equation} and the optimal value is defined as
\begin{equation}\label{ErgCost1Opt}
\sE^*(c) \,\df\, \inf_{x\in\Rd}\inf_{U\in \Uadm}\sE_{x}(c, U)\,.
\end{equation}
Then a policy $U^*\in \Uadm$ is said to be optimal if we have 
\begin{equation}\label{ErgCost1Opt1}
\sE_{x}(c, U^*) = \sE^*(c)\,.
\end{equation}
%%%%%%%%%%%%%%%%%%%%%%%%%%%%%%%%%%%%%%%%%%%%%%%%%%%%%%%%%%%%%%%%%%%%%%%%%%%%%%%%

\subsubsection{Control up to an Exit Time}\label{exitTimeSection}
For each $U\in\Uadm$ the associated cost is given as
\begin{equation*}
\hat{\cJ}_{e}^{U}(x) \,\df \, \Exp_x^{U} \left[\int_0^{\tau(O)} e^{-\int_{0}^{t}\delta(X_s, U_s) \D s} c(X_t, U_t) \D t + e^{-\int_{0}^{\tau(O)}\delta(X_s, U_s) \D s}h(X_{\tau(O)})\right],\quad x\in\Rd\,,
\end{equation*}
where $O\subset \Rd$ is a smooth bounded domain, $\tau(O) \,\df\,  \inf\{t \geq 0: X_t\notin O\}$, $\delta(\cdot, \cdot): \bar{O}\times\Act\to [0, \infty)$ is the discount function and $h:\bar{O}\to \RR_+$ is the terminal cost function. The optimal value is defined as \[\hat{\cJ}_{e}^{*}(x)=\inf_{U\in \Uadm}\hat{\cJ}_{e}^{U}(x),\]
We assume that $\delta\in \cC(\bar{O}\times \Act)$, $h\in\Sob^{2,p}(O)$. 

%As in \cite{RZ21}, \cite[p.229]{B05Survey} the analysis leads to the following HJB equation. 
%\begin{align*}
%\min_{\zeta \in\Act}\left[\sL_{\zeta}\phi(x) - \delta(x, \zeta) \phi(x) + c(x, \zeta)\right] =  0\,,\quad \text{for all\ }\,\, x\in O\,,\quad\text{with}\quad
% \phi = h\,\,\, \text{on}\,\,\, \partial{O}\,.
%\end{align*} By similar argument as in \cite[Theorem~3.5.3]{ABG-book}, \cite[Theorem~3.5.6]{ABG-book} we have that $\hat{\cJ}_{e}^{*}$, $\hat{\cJ}_{e,n}^{*}$ are unique solutions to their respective HJB equations. Existence follows by utilizing the Leray-Schauder fixed point theorem as in \cite[Theorem~3.5.3]{ABG-book} and uniqueness follows by It$\hat{o}$-Krylov formula as in \cite[Theorem~3.5.6]{ABG-book}

\subsection{Problems Studied}

The main purpose of this manuscript will be to address the following problems:
\begin{itemize}
\item[•]\textbf{Continuity of finite and infinite horizon costs.} Suppose $\{v_n\}_{n\in \NN}$ is a sequence of control policies which converge to another control policy $v$ in some sense (in particular, under Borkar topology, see Subsection~\ref{B-topo}). Does this imply that
\begin{itemize}
\item[•]\emph{for finite horizon cost:} $\cJ_{T}(x, v_n)\to \cJ_{T}(x, v)$\,?
\item[•]\emph{for discounted cost:} $\cJ_{\alpha}^{v_n}(x, c)\to \cJ_{\alpha}^{v}(x, c)$ \,?
\item[•]\emph{for ergodic cost:} $\sE_{x}(c, v_n)\to \sE_{x}(c, v)$ \,?
\item[•] \emph{for cost up to an exit time:} $\hat{\cJ}_{e}^{v_n}(x) \to \hat{\cJ}_{e}^{v}(x)$ \,?
\end{itemize}
\item[•]\textbf{Near optimality of quantized policies.} For any given $\epsilon > 0$, whether it is possible to construct a quantized (finite action/ piecewise constant) policy $v_{\epsilon}$ such that
\begin{itemize}
\item[•]\emph{for finite horizon cost:} $\cJ_{T}(x, v_{\epsilon})\leq \cJ_{T}^*(x) + \epsilon$\,?
\item[•]\emph{for discounted cost:} $\cJ_{\alpha}^{v_{\epsilon}}(x, c)\leq V_{\alpha}(x) + \epsilon$ \,?
\item[•]\emph{for ergodic cost:} $\sE_{x}(c, v_{\epsilon})\leq \sE^*(c) + \epsilon$ \,?
\item[•] \emph{for cost up to an exit time:} $\hat{\cJ}_{e}^{v_{\epsilon}}(x) \leq \hat{\cJ}_{e}^{*}(x) + \epsilon$ \,?
\end{itemize}
\end{itemize}
In this manuscript, we have shown that under a mild set of assumptions the answers to the above mentioned questions are affirmative. For the finite horizon case, we also study the time-discretization approximations as a further implication of our analysis.

Let us introduce a parametric family of elliptic operator, which will be useful in our analysis\,. With $\zeta\in \Act$ treated as a parameter, we define a family of operators $\sL_{\zeta}$ mapping
$\cC^2(\Rd)$ to $\cC(\Rd)$ by
\begin{equation}\label{E-cI}
\sL_{\zeta} f(x) \,\df\, \trace\bigl(a(x)\grad^2 f(x)\bigr) + \,b(x,\zeta)\cdot \grad f(x)\,, 
\end{equation}
where $f\in \cC^2(\Rd)\cap\cC_b(\Rd)$\, and for $\pv \in\pV$ we extend $\sL_{\zeta}$ as follows:
\begin{equation}\label{EExI}
\sL_\pv f(x) \,\df\, \int_{\Act} \sL_{\zeta} f(x)\pv(\D \zeta)\,.
\end{equation}Also, for each $v \in\Usm$, we define
\begin{equation}\label{Efixstra}
\sL_{v} f(x) \,\df\, \trace(a\grad^2 f(x)) + b(x,v(x))\cdot\grad f(x)\,.
\end{equation}

\section{Continuity of Expect Cost under Various Criteria in Control Policies under the Borkar Topology}

\subsection{Continuity for Discounted Cost/Cost upto an Exit Time}\label{CDiscCost}
Since the proof techniques are almost similar, in this section, we analyze the continuity of both discounted cost as well as the cost upto an exit time with respect to the policies in the space of stationary policies under Borkar topology (see Definition~\ref{DefBorkarTopology1A}), i.e., we show that the maps $v\to \cJ_{\alpha}^{v}$ and $v\to \hat{\cJ}_{e}^{v}$ are continuous on $\Usm$\,.  
\subsubsection{Continuity of Discounted Cost}
Now we prove the continuity of the discounted cost as a function of the control policies\,.
\begin{theorem}\label{T1.1}
Suppose Assumptions (A1)-(A3) hold. Then the map $v\mapsto \cJ_{\alpha}^{v}(x, c)$ from $\Usm$ to $\RR$ is continuous. 
\end{theorem}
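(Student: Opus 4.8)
The plan is to represent the discounted cost $\varphi_v \df \cJ_\alpha^v(\cdot, c)$ as the unique bounded solution of the associated Poisson (resolvent) equation and then pass to the limit in that equation along the Borkar topology. Under \textup{(A1)}--\textup{(A3)} it is standard that $\varphi_v \in \Sobl^{2,p}(\Rd)\cap\cC_b(\Rd)$ for every $p\in(1,\infty)$, that it solves
\[
\sL_v\varphi_v(x) - \alpha\,\varphi_v(x) \,=\, -\,c\bigl(x,v(x)\bigr)\,,\qquad x\in\Rd\,,
\]
and that it is the unique bounded solution of this equation — the identification with the stochastic cost, hence uniqueness, follows from the It\^o--Krylov formula applied to $e^{-\alpha t}\varphi(X_t)$ together with the non-explosion guaranteed by \textup{(A2)}. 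Since $c\ge0$ is bounded, one also has the uniform bound $0\le\varphi_v\le\alpha^{-1}\norm{c}_\infty$ for \emph{every} $v\in\Usm$. So I would fix $v_n\to v$ in $\Usm$, put $\varphi_n\df\varphi_{v_n}$, and study the sequence of equations $\sL_{v_n}\varphi_n - \alpha\varphi_n = -c(\cdot,v_n(\cdot))$.

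The next step is a uniform a priori estimate followed by a compactness extraction. On each ball $\sB_R$ the drift $x\mapsto b(x,v_n(x))$ is bounded by $\sup_{\sB_R\times\Act}\abs{b}<\infty$ (joint continuity of $b$ and compactness of $\Act$), uniformly in $n$; $a=\tfrac12\upsigma\upsigma\transp$ is continuous and uniformly elliptic on $\sB_R$ by \textup{(A3)}; and $\norm{c(\cdot,v_n(\cdot))}_{\Lp^\infty}\le\norm{c}_\infty$. The interior Calder\'on--Zygmund estimate for non-divergence form elliptic operators then gives, for every $R$ and $p\in(1,\infty)$, a bound on $\norm{\varphi_n}_{\Sob^{2,p}(\sB_R)}$ independent of $n$. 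Choosing $p>d$ and using that $\Sob^{2,p}(\sB_R)$ embeds compactly into $\cC^{1,\beta}(\overline{\sB}_R)$, a diagonal argument produces a subsequence (not relabelled) and a limit $\varphi_*$ with $\varphi_n\rightharpoonup\varphi_*$ weakly in $\Sobl^{2,p}(\Rd)$, with $\varphi_n\to\varphi_*$ and $\grad\varphi_n\to\grad\varphi_*$ locally uniformly, and with $0\le\varphi_*\le\alpha^{-1}\norm{c}_\infty$.

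Finally I would identify $\varphi_*$ as a solution of the limiting equation $\sL_v\varphi_*-\alpha\varphi_*=-c(\cdot,v(\cdot))$. Fixing $\psi\in\cC^\infty_{\mathrm c}(\Rd)$ supported in $\sB_R$ and multiplying the (a.e.) equation for $\varphi_n$ by $\psi$ and integrating gives
\[
\int_{\Rd}\trace(a\grad^2\varphi_n)\,\psi\,\D x + \int_{\Rd}b(x,v_n(x))\cdot\grad\varphi_n\,\psi\,\D x - \alpha\int_{\Rd}\varphi_n\psi\,\D x = -\int_{\Rd}c(x,v_n(x))\,\psi\,\D x\,.
\]
The first term converges since $\grad^2\varphi_n\rightharpoonup\grad^2\varphi_*$ in $\Lp^p_{\mathrm{loc}}$ while $a\psi$ is fixed in $\Lp^{p'}$; the third converges by the locally uniform convergence of $\varphi_n$; and the right-hand side converges by the very definition of the Borkar topology (Definition~\ref{DefBorkarTopology1A}), as $\psi\in\Lp^1\cap\Lp^2$ and $c\in\cC_b(\Rd\times\Act)$. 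The delicate term is the first-order one: $\{b^i(\cdot,v_n(\cdot))\}$ is bounded in $\Lp^2(\sB_R)$ and, by Definition~\ref{DefBorkarTopology1A} applied to the bounded continuous integrand $(x,\zeta)\mapsto\chi(x)b^i(x,\zeta)$ with $\chi\in\cC^\infty_{\mathrm c}(\Rd)$, $\chi\equiv1$ on $\sB_R$, it converges \emph{weakly} in $\Lp^2(\sB_R)$ to $b^i(\cdot,v(\cdot))$; since $\psi\,\grad\varphi_n\to\psi\,\grad\varphi_*$ converges \emph{strongly} in $\Lp^2(\sB_R)$ by the $\cC^{1,\beta}$-compactness, the weak--strong pairing passes to the limit and yields $\int_{\Rd}b(x,v(x))\cdot\grad\varphi_*\,\psi\,\D x$. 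Hence $\varphi_*$ is a bounded distributional, thus (by elliptic regularity, the coefficients being locally bounded and $a$ locally uniformly elliptic) a bounded $\Sobl^{2,p}(\Rd)$, solution of the limiting equation, so $\varphi_*=\varphi_v$ by uniqueness. As every subsequence of $\{\varphi_n\}$ has a further subsequence converging locally uniformly to the same $\varphi_v$, the full sequence converges, and in particular $\cJ_\alpha^{v_n}(x,c)\to\cJ_\alpha^v(x,c)$ for all $x\in\Rd$. I expect the main obstacle to be exactly this weak--strong argument for the first-order term, complicated by the fact that the drift is not globally bounded — which is what forces every limit passage to be localized on balls — with compactness of $\Act$ and the affine growth bound \textup{(A2)} ensuring the localization is uniform in $n$.
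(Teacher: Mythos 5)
Your proposal is correct and follows essentially the same route as the paper: write $\cJ_\alpha^{v_n}$ as the solution of the Poisson equation, obtain uniform $\Sob^{2,p}(\sB_R)$ bounds from the Calder\'on--Zygmund estimates and the bound $\cJ_\alpha^{v_n}\le\alpha^{-1}\norm{c}_\infty$, extract a subsequence converging weakly in $\Sobl^{2,p}$ and strongly in $\cC^{1,\beta}_{\mathrm{loc}}$, pass to the limit in the tested equation using the Borkar topology for the drift and cost terms (your weak--strong pairing in $\Lp^2(\sB_R)$ is just a repackaging of the paper's decomposition of the first-order term), and identify the limit with $\cJ_\alpha^{v}$ via the It\^o--Krylov representation. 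The only cosmetic addition is your explicit subsequence-of-subsequence remark to upgrade subsequential to full convergence, which the paper leaves implicit.
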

\begin{proof}
Let $\{v_n\}_n$ be a sequence in $\Usm$ such that $v_n\to v$ in $\Usm$\,. It known that $\cJ_{\alpha}^{v_n}(x, c)$ is a solution to the Poisson's equation (see, \cite[Lemma~A.3.7]{ABG-book})
\begin{equation}\label{ET1.1A}
 \sL_{v_n}\cJ_{\alpha}^{v_n}(x, c) - \alpha \cJ_{\alpha}^{v_n}(x, c) = - c(x, v_n(x))\,.
\end{equation}
Now by standard elliptic p.d.e. estimates as in \cite[Theorem~9.11]{GilTru}, for any $p\geq d+1$ and $R >0$, we deduce that
\begin{equation}\label{ET1.1B}
\norm{\cJ_{\alpha}^{v_n}(x, c)}_{\Sob^{2,p}(\sB_R)}
\,\le\, \kappa_1\bigl(\norm{\cJ_{\alpha}^{v_n}(x, c)}_{L^p(\sB_{2R})} + \norm{c(x, v_n(x))}_{L^p(\sB_{2R})}\bigr)\,,
\end{equation}
for some positive constant $\kappa_1$ which is independent of $n$\,. Since 
\begin{equation*}
\norm{c}_{\infty} \,\df\, \sup_{(x,u)\in\Rd\times\Act} c(x,u) \leq M \, <\,\infty \,, \quad \text{and}\quad \cJ_{\alpha}^{v_n}(x, c) \leq \frac{\norm{c}_{\infty}}{\alpha}\,,
\end{equation*} from \cref{ET1.1A} we obtain
\begin{equation}\label{ETC1.3B}
\norm{\cJ_{\alpha}^{v_n}(x, c)}_{\Sob^{2,p}(\sB_R)}
\,\le\, \kappa_1 M\bigl(\frac{|\sB_{2R}|^{\frac{1}{p}}}{\alpha} + |\sB_{2R}|^{\frac{1}{p}}\bigr)\,.
\end{equation}We know that for $1< p < \infty$, the space $\Sob^{2,p}(\sB_R)$ is reflexive and separable, hence, as a corollary of Banach Alaoglu theorem, we have that every bounded sequence in $\Sob^{2,p}(\sB_R)$ has a weakly convergent subsequence (see, \cite[Theorem~3.18.]{HB-book}). Also, we know that for $p\geq d+1$ the space $\Sob^{2,p}(\sB_R)$ is compactly embedded in $\cC^{1, \beta}(\bar{\sB}_R)$\,, where $\beta < 1 - \frac{d}{p}$ (see \cite[Theorem~A.2.15 (2b)]{ABG-book}), which implies that every weakly convergent sequence in $\Sob^{2,p}(\sB_R)$ will converge strongly in $\cC^{1, \beta}(\bar{\sB}_R)$\,. Thus, in view of estimate \cref{ETC1.3B}, by a standard diagonalization argument and Banach Alaoglu theorem, we can extract a subsequence $\{V_{\alpha}^{n_k}\}$ such that for some $V_{\alpha}^*\in \Sobl^{2,p}(\Rd)$
\begin{equation}\label{ET1.1C}
\begin{cases}
\cJ_{\alpha}^{v_{n_k}}(x, c)\to & V_{\alpha}^*\quad \text{in}\quad \Sobl^{2,p}(\Rd)\quad\text{(weakly)}\\
\cJ_{\alpha}^{v_{n_k}}(x, c)\to & V_{\alpha}^*\quad \text{in}\quad \cC^{1, \beta}_{loc}(\Rd) \quad\text{(strongly)}\,.
\end{cases}       
\end{equation} 
In the following we will show that $V_{\alpha}^* = \cJ_{\alpha}^{v}(x, c)$. Note that 
\begin{align*}
b(x,v_{n_k}(x))\cdot \grad \cJ_{\alpha}^{v_{n_k}}(x, c) - b(x,v(x))\cdot \grad V_{\alpha}^*(x) = & b(x,v_{n_k}(x))\cdot \grad \left(\cJ_{\alpha}^{v_{n_k}}(x, c) - V_{\alpha}^*\right)(x) \\
& + \left(b(x,v_{n_k}(x)) - b(x,v(x))\right)\cdot \grad V_{\alpha}^*(x)\,.
\end{align*}
Since $\cJ_{\alpha}^{v_{n_k}}(x, c)\to V_{\alpha}^*$ in $\cC^{1, \beta}_{loc}(\Rd)$ and $b$ is locally bounded, on any compact set $b(x,v_{n_k}(x))\cdot \grad \left(\cJ_{\alpha}^{v_{n_k}}(x, c) - V_{\alpha}^*\right)(x)\to 0$ strongly. Also, since $\grad V_{\alpha}^*\in \cC^{1, \beta}_{loc}(\Rd)$, in view of the topology of $\Usm$, for any $\phi\in\cC_c^{\infty}(\Rd)$ we have 
\begin{equation*}
\lim_{n\to\infty}\int_{\Rd}b(x,v_{n_k}(x))\cdot \grad V_{\alpha}^*(x)\phi(x)\D x = \int_{\Rd}b(x,v(x))\cdot \grad V_{\alpha}^*(x)\phi(x)\D x\,.
\end{equation*}
Hence, as $k\to \infty$, we obtain 
\begin{equation}\label{ET1.1D}
b(x,v_{n_k}(x))\cdot \grad \cJ_{\alpha}^{v_{n_k}}(x, c) + c(x, v_{n_k}(x)) \to b(x,v(x))\cdot \grad V_{\alpha}^*(x) + c(x, v(x))\quad\text{weakly}\,.
\end{equation}
Now, multiplying by a test function $\phi\in \cC_{c}^{\infty}(\Rd)$, from \cref{ET1.1A}, it follows that
\begin{align*}
\int_{\Rd}\trace\bigl(a(x)\grad^2 \cJ_{\alpha}^{v_{n_k}}(x, c)\bigr)\phi(x)\D x + \int_{\Rd}\{b(x,v_{n_k}(x))\cdot \grad \cJ_{\alpha}^{v_{n_k}}(x, c) + & c(x, v_{n_k}(x))\}\phi(x)\D x \\
&= \alpha\int_{\Rd} \cJ_{\alpha}^{v_{n_k}}(x, c)\phi(x)\D x\,.
\end{align*}
Hence, using \cref{ET1.1C}, \cref{ET1.1D}, and letting $k\to\infty$ (in the sense of distributions), we obtain 
\begin{equation}\label{ET1.1E}
\int_{\Rd}\trace\bigl(a(x)\grad^2 V_{\alpha}^*(x)\bigr)\phi(x)\D x + \int_{\Rd} \{b(x,v(x))\cdot \grad V_{\alpha}^*(x) + c(x, v(x))\}\phi(x)\D x = \alpha\int_{\Rd} V_{\alpha}^*(x)\phi(x)\D x\,.
\end{equation} Since $\phi\in \cC_{c}^{\infty}(\Rd)$ is arbitrary and $V_{\alpha}^*\in \Sobl^{2,p}(\Rd)$ from \cref{ET1.1E}, we deduce that
the function $V_{\alpha}^*\in \Sobl^{2,p}(\Rd)\cap \cC_{b}(\Rd)$ satisfies
\begin{equation}\label{ET1.1F}
\trace\bigl(a(x)\grad^2 V_{\alpha}^{*}(x)\bigr) + b(x,v(x))\cdot \grad V_{\alpha}^{*}(x) + c(x, v(x)) = \alpha V_{\alpha}^{*}(x)\,.
\end{equation}
Let $X$ be the solution of the SDE \cref{E1.1} corresponding to $v$. Now applying It$\hat{\rm o}$-Krylov formula, we obtain the following
\begin{align*}
& \Exp_x^{v}\left[ e^{-\alpha T}V_{\alpha}^{*}(X_{T})\right] - V_{\alpha}^{*}(x)\nonumber\\ 
& \,=\,\Exp_x^{v}\left[\int_0^{T} e^{-\alpha s}\{\trace\bigl(a(X_s)\grad^2 V_{\alpha}^{*}(X_s)\bigr) + b(X_s, v(X_s))\cdot \grad V_{\alpha}^{*}(X_s) - \alpha V_{\alpha}^{*}(X_s))\} \D{s}\right] \,.
\end{align*}
Hence, by \cref{ET1.1F}, we get
\begin{align}\label{ET1.1G}
\Exp_x^{v}\left[ e^{-\alpha T} V_{\alpha}^{*}(X_{T})\right] - V_{\alpha}^{*}(x) \,=\,- \Exp_x^{v}\left[\int_0^{T} e^{-\alpha s}c(X_s, v(X_s))\D{s}\right] \,.
\end{align}
Since $V_{\alpha}^{*}$ is bounded and
$$\Exp_x^{v}\left[ e^{-\alpha T} V_{\alpha}^{*}(X_{T})\right] = e^{-\alpha T}\Exp_x^{v}\left[ V_{\alpha}^{*}(X_{T})\right],$$
letting $T\to\infty$, it follows that
\begin{equation*}
\lim_{T\to\infty}\Exp_x^{v}\left[ e^{-\alpha T} V_{\alpha}^{*}(X_{T})\right] = 0\,.
\end{equation*}
Thus, letting $T \to \infty$ by monotone convergence theorem, from \cref{ET1.1G}, we obtain 
\begin{align}\label{ET1.1H}
 V_{\alpha}^{*}(x) \,=\, \Exp_x^{v}\left[\int_0^{\infty} e^{-\alpha s}c(X_s, v(X_s)) \D{s}\right] = \cJ_{\alpha}^{v}(x, c)\,.
\end{align}This completes the proof.
\end{proof}
\subsubsection{Continuity of Cost upto an Exit Time} Following the proof technique of Theorem~\ref{T1.1}, now we show that the cost upto an exit time (defined in Subsection~\ref{exitTimeSection}) is continuous as a function of the control policies\,.  
\begin{theorem}\label{T1.1Exit}
Suppose Assumptions (A1)-(A3) hold. Then the map $v\mapsto \hat{\cJ}_{e}^{v}(x)$ from $\Usm$ to $\RR$ is continuous\,. 
\end{theorem}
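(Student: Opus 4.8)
The plan is to run the argument of Theorem~\ref{T1.1} almost verbatim, with the Poisson equation on $\Rd$ replaced by a Dirichlet problem on $O$. Fix $x\in\Rd$ and a sequence $\{v_n\}\subset\Usm$ with $v_n\to v$ in $\Usm$; it suffices to show that every subsequence of $\{\hat{\cJ}_{e}^{v_n}(x)\}$ has a further subsequence converging to $\hat{\cJ}_{e}^{v}(x)$. It is standard (the Dirichlet analogue of \cite[Lemma~A.3.7]{ABG-book}) that, for $p\ge d+1$, $\psi_n\df\hat{\cJ}_{e}^{v_n}$ lies in $\Sob^{2,p}(O)\cap\cC(\bar O)$ and solves
\[
\sL_{v_n}\psi_n - \delta(x,v_n(x))\,\psi_n = -\,c(x,v_n(x))\quad\text{in }O,\qquad \psi_n = h \ \ \text{on }\partial O,
\]
and, conversely, by the It\^o--Krylov formula together with the maximum principle (valid since $\delta\ge0$ and $a$ is nondegenerate on the bounded set $O$), any $\Sob^{2,p}(O)$-solution of this problem equals $\hat{\cJ}_{e}^{v_n}$.

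To obtain a bound on $\psi_n$ uniform in $n$ I would first remove the boundary data by setting $\tilde\psi_n\df\psi_n-h$. Since $h\in\Sob^{2,p}(O)$, $\tilde\psi_n$ vanishes on $\partial O$ and satisfies $\sL_{v_n}\tilde\psi_n - \delta(x,v_n)\tilde\psi_n = -\tilde c_n$ with $\tilde c_n\df c(x,v_n)+\trace(a\grad^2 h)+b(x,v_n)\cdot\grad h-\delta(x,v_n)h$; because $c$, $\delta$, $b$ are bounded on $\bar O\times\Act$ and $a$ is bounded on $\bar O$, the family $\{\tilde c_n\}$ is bounded in $L^p(O)$ uniformly in $n$. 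The probabilistic representation gives $0\le\psi_n(x)\le\norm{c}_\infty\Exp_x^{v_n}[\tau(O)]+\norm{h}_\infty$ for $x\in O$, and the policy-independent bound $\sup_n\sup_{x\in O}\Exp_x^{v_n}[\tau(O)]<\infty$ (which holds by nondegeneracy of $a$ and boundedness of $b$ on $\bar O\times\Act$ over the bounded domain $O$) yields $\sup_n\norm{\psi_n}_{L^\infty(O)}<\infty$, hence $\sup_n\norm{\tilde\psi_n}_{L^p(O)}<\infty$. The global $\Sob^{2,p}$ estimate for the Dirichlet problem with zero boundary data (\cite[Theorem~9.15]{GilTru}) then gives $\sup_n\norm{\tilde\psi_n}_{\Sob^{2,p}(O)}<\infty$, and the compact embedding $\Sob^{2,p}(O)\hookrightarrow\cC^{1,\beta}(\bar O)$ for $\beta<1-\nicefrac{d}{p}$ lets me extract a subsequence with $\tilde\psi_{n_k}\rightharpoonup\tilde\psi^*$ weakly in $\Sob^{2,p}(O)$ and $\tilde\psi_{n_k}\to\tilde\psi^*$ strongly in $\cC^{1,\beta}(\bar O)$. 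Setting $\psi^*\df\tilde\psi^*+h$, we get $\hat{\cJ}_{e}^{v_{n_k}}\rightharpoonup\psi^*$ weakly in $\Sob^{2,p}(O)$ and $\hat{\cJ}_{e}^{v_{n_k}}\to\psi^*$ strongly in $\cC^{1,\beta}(\bar O)$.

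Next I would pass to the limit in the weak form of the equation tested against $\phi\in\cC_{\mathrm{c}}^{\infty}(O)$. Writing $b(x,v_{n_k})\cdot\grad\psi_{n_k} = b(x,v_{n_k})\cdot\grad(\psi_{n_k}-\psi^*) + b(x,v_{n_k})\cdot\grad\psi^*$ and $\delta(x,v_{n_k})\psi_{n_k} = \delta(x,v_{n_k})(\psi_{n_k}-\psi^*) + \delta(x,v_{n_k})\psi^*$, the two ``difference'' terms vanish in $L^p(O)$ by the strong $\cC^{1,\beta}(\bar O)$ convergence and the boundedness of $b,\delta$ on $\bar O\times\Act$; the terms $\int_O b(x,v_{n_k})\cdot\grad\psi^*\,\phi$, $\int_O\delta(x,v_{n_k})\psi^*\phi$, and $\int_O c(x,v_{n_k})\phi$ converge to the same integrals with $v$ by the definition of the Borkar topology, since $(\grad\psi^*)\phi$ and $\psi^*\phi$ (extended by zero) lie in $L^1(\Rd)\cap L^2(\Rd)$ while $b_i\chi$, $\delta\chi$, $c\chi$ lie in $\cC_b(\Rd\times\Act)$ for any $\chi\in\cC_{\mathrm{c}}^{\infty}(\Rd)$ with $\chi\equiv1$ on $\supp\phi$ and $\supp\chi\subset O$; the second-order term passes by the weak $\Sob^{2,p}$ convergence. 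Hence $\psi^*\in\Sob^{2,p}(O)\cap\cC(\bar O)$ solves $\sL_v\psi^* - \delta(x,v)\psi^* = -c(x,v)$ a.e.\ in $O$, and $\psi^*=h$ on $\partial O$ because $\hat{\cJ}_{e}^{v_{n_k}}=h$ there and the convergence is uniform on $\bar O$. Applying the It\^o--Krylov formula to $\psi^*$ along the solution $X$ of \cref{E1.1} under $v$, up to $\tau(O)\wedge T$, and letting $T\to\infty$ (using $\tau(O)<\infty$ a.s., boundedness of $\psi^*$, that $X_{\tau(O)}\in\partial O$ so $\psi^*(X_{\tau(O)})=h(X_{\tau(O)})$, and monotone convergence for the running-cost integral) identifies $\psi^*=\hat{\cJ}_{e}^{v}$. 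Since this limit is independent of the subsequence, $\hat{\cJ}_{e}^{v_n}(x)\to\hat{\cJ}_{e}^{v}(x)$, which is the claimed continuity.

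The only genuinely new point compared with Theorem~\ref{T1.1} is the control \emph{up to the boundary}: one needs the global (rather than interior) $\Sob^{2,p}(O)$ estimate---hence the smoothness of $O$---together with a policy-uniform bound on $\sup_{x}\Exp_x^{v_n}[\tau(O)]$ to get $\sup_n\norm{\psi_n}_{L^\infty(O)}<\infty$, after which the passage to the limit is exactly as before. A minor bookkeeping point is that the test functions in the Borkar topology must be globally defined and bounded on $\Rd\times\Act$, which is why the cutoff $\chi$ is inserted.
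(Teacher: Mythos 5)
Your proof is correct and follows essentially the same route as the paper's: characterize $\hat{\cJ}_{e}^{v_n}$ as the unique $\Sob^{2,p}(O)$ solution of the Dirichlet Poisson equation, obtain $n$-uniform $\Sob^{2,p}(O)$ bounds, extract a weakly (and, by compact embedding, strongly in $\cC^{1,\beta}(\bar O)$) convergent subsequence, pass to the limit in the equation using the Borkar topology, and identify the limit with $\hat{\cJ}_{e}^{v}$ via the It\^o--Krylov formula and a subsequence argument. The only difference is that you spell out details the paper leaves implicit in its appeal to the proof of Theorem~\ref{T1.1} (the policy-uniform bound on $\Exp_x^{v_n}[\tau(O)]$, the reduction to zero boundary data for the global $\Sob^{2,p}$ estimate, and the cutoff making the Borkar test functions globally defined), which strengthens rather than alters the argument.
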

\begin{proof}
Let $\{v_n\}_n$ be a sequence in $\Usm$ such that $v_n\to v$ in $\Usm$\,. From \cite[Theorem~9.15]{GilTru}, it follows that there exist a unique function $\psi_n(x)\in \Sob^{2,p}(O)$ satisfying the following Poisson's equation
\begin{equation}\label{T1.1ExitA}
 \sL_{v_n}\psi_{n}(x) - \delta(x, v_n(x)) \psi_{n}(x) + c(x, v_n(x)) = 0\quad \text{with}\quad \psi_n = h\,\,\, \text{on}\,\,\partial{O}\,.
\end{equation} Applying It$\hat{\rm o}$-Krylov formula, one can show that $\psi_n(x) = \hat{\cJ}_{e}^{v_n}(x)$ (this stochastic representation also ensures the uniqueness of the solution of \cref{T1.1ExitA} )\,.

Now following the argument as in Theorem~\ref{T1.1}, by standard elliptic p.d.e. estimates \cite[Theorem~9.11]{GilTru}, we deduce that there exists $\psi(x)\in \Sob^{2,p}(O)$ such that $\psi_n \to \psi$ weakly in $\Sob^{2,p}(O)$\,. Thus, closely following the proof of Theorem~\ref{T1.1}, letting $n\to \infty$\,, from \cref{T1.1ExitA} it follows that
\begin{equation}\label{T1.1ExitB}
 \sL_{v}\psi(x) - \delta(x, v(x)) \psi(x) + c(x, v(x)) = 0\quad \text{with}\quad \psi = h\,\,\, \text{on}\,\,\partial{O}\,.
\end{equation} Again, by It$\hat{\rm o}$-Krylov formula, using \cref{T1.1ExitB} we deduce that $\psi(x) = \hat{\cJ}_{e}^{v}(x)$\,. This completes the proof of the theorem\,.
\end{proof}

\subsection{Continuity for Ergodic Cost}\label{CErgoCost}
In this section we study the continuity of the ergodic costs with respect to policies under Borkar topology in the space of stationary Markov policies. We will study this problem under two sets of assumptions: the first is so called near-monotonicity assumption on the running cost function and other one is Lyapunov stability assumption on the system. Our proof strategies will be slightly different under these two setups: In the former we will build on regularity properties of invariant probability measures, in the latter we will build more directly on regularity properties of solutions to HJB equations\,.
\subsubsection{Under a near-monotonicity assumption}\label{NearMonotone}    
We assume that the running cost function $c$ is near-monotone with respect to $\sE^*(c)$, i.e.,
\begin{itemize}
\item[\hypertarget{A4}{{(A4)}}] It holds that
\begin{equation}\label{ENearmonot}
\liminf_{\norm{x}\to\infty}\inf_{\zeta\in \Act} c(x,\zeta) > \sE^*(c)\,. 
\end{equation}
\end{itemize}
This condition penalizes the escape of probability mass to infinity. Since our running cost $c$ is bounded it is easy to see that $\sE^*(c) \leq \norm{c}_{\infty}$\,. It is known that under \cref{ENearmonot}, optimal control exists in the space of stable stationary Markov controls (see, \cite[Theorem~3.4.5]{ABG-book}). 
 
First, we prove that for each stable stationary Markov policy $v\in \Usm$ the associated Poisson's equation admits a unique solution in a certain function space. This uniqueness result will be useful in establishing the continuity and near optimality of quantized policies. For the following supporting result, we closely follow \cite{ABG-book}\,.
\begin{theorem}\label{NearmonotPoisso}
Suppose that Assumptions (A1) - (A4) hold. Let $v\in\Usm$ be a stable control with unique invariant measure $\eta_{v}$, such that 
\begin{equation}\label{ENearmonotPoisso1}
\liminf_{\norm{x}\to\infty}\inf_{\zeta\in \Act} c(x,\zeta) > \inf_{x\in\Rd}\sE_x(c, v)\,. 
\end{equation} Then, there exists a unique pair $(V^v, \rho_v)\in \Sobl^{2,p}(\Rd)\times \RR$, \, $1< p < \infty$, with $V^v(0) = 0$, $\inf_{\Rd} V^v > -\infty$ and $\rho_v = \int_{\Rd}\int_{\Act} c(x, u)v(x)(\D{u})\eta_{v}(\D{x})$, satisfying
\begin{equation}\label{EErgonearPoisso1A}
\rho_v = \left[\sL_{v}V^v(x) + c(x, v(x))\right]
\end{equation}
Moreover, we have
\begin{itemize}
\item[(i)]$\rho_v = \inf_{\Rd}\sE_x(c, v)$\,.
\item[(ii)] for all $x\in \Rd$
\begin{equation}\label{EErgonearPoisso1B}
V^v(x) \,=\, \lim_{r\downarrow 0}\Exp_{x}^v\left[\int_{0}^{\uuptau_{r}} \left( c(X_t, v(X_t)) - \rho_v\right)\D t\right]\,.
\end{equation}
\end{itemize} 
\end{theorem}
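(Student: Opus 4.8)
The plan is to combine a vanishing-discount argument for existence with the hitting-time representation \eqref{EErgonearPoisso1B} for identifying the ergodic constant and for uniqueness. Since $v$ is stable, the diffusion under $v$ is positive recurrent with unique invariant probability $\eta_{v}$, so the ergodic theorem for such diffusions gives $\sE_{x}(c,v)=\int_{\Rd}c(y,v(y))\,\eta_{v}(\D y)=:\rho_{v}$ for every $x\in\Rd$; this already yields~(i), and it turns hypothesis \eqref{ENearmonotPoisso1} into the statement that $c(x,\zeta)-\rho_{v}\ge\epsilon_{0}$ for all $\abs{x}\ge R_{0}$ and all $\zeta\in\Act$, for suitable $R_{0},\epsilon_{0}>0$. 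For existence I would start from the $\alpha$-discounted cost $\cJ_{\alpha}^{v}(\cdot,c)$ under $v$, which (as recalled in the proof of Theorem~\ref{T1.1}) lies in $\Sobl^{2,p}(\Rd)\cap\cC_{b}(\Rd)$ and solves $\sL_{v}\cJ_{\alpha}^{v}(\cdot,c)-\alpha\,\cJ_{\alpha}^{v}(\cdot,c)=-c(\cdot,v(\cdot))$, and set $\bar V_{\alpha}:=\cJ_{\alpha}^{v}(\cdot,c)-\cJ_{\alpha}^{v}(0,c)$, so that
\[
\sL_{v}\bar V_{\alpha}(x)+c(x,v(x))-\alpha\,\cJ_{\alpha}^{v}(0,c)=\alpha\,\bar V_{\alpha}(x),
\]
where $\alpha\,\cJ_{\alpha}^{v}(0,c)\to\rho_{v}$ as $\alpha\downarrow0$ by an Abelian theorem together with $\tfrac1T\Exp_{0}^{v}\bigl[\int_{0}^{T}c(X_{t},v(X_{t}))\,\D t\bigr]\to\rho_{v}$.

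The crucial step is a bound on $\bar V_{\alpha}$ that is uniform in $\alpha\in(0,1]$: a global lower bound $\inf_{\Rd}\bar V_{\alpha}\ge-\kappa_{0}$, extracted from near-monotonicity (the integrand $c-\rho_{v}$ is bounded below everywhere and is $\ge\epsilon_{0}$ outside $\sB_{R_{0}}$) together with positive recurrence of the $v$-diffusion, and local upper bounds $\norm{\bar V_{\alpha}}_{L^{\infty}(\sB_{R})}\le\kappa(R)$ obtained from a Harnack-type estimate applied to the nonnegative functions $\bar V_{\alpha}-\inf_{\Rd}\bar V_{\alpha}$ (which solve an equation with coefficients bounded uniformly in $\alpha$), chained along balls using recurrence. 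Granting this, the interior $\Sob^{2,p}$-estimates of \cite[Theorem~9.11]{GilTru} bound $\{\bar V_{\alpha}\}$ uniformly in $\Sobl^{2,p}(\Rd)$, and then --- exactly as in the proof of Theorem~\ref{T1.1}, via Banach--Alaoglu, the compact embedding $\Sob^{2,p}(\sB_{R})\hookrightarrow\cC^{1,\beta}(\bar{\sB}_{R})$, a diagonalization, and passage to the limit in the PDE in the sense of distributions --- some sequence $\bar V_{\alpha_{k}}$ converges to a function $V^{v}\in\Sobl^{2,p}(\Rd)$ with $V^{v}(0)=0$ and $\inf_{\Rd}V^{v}>-\infty$ that solves $\rho_{v}=\sL_{v}V^{v}(x)+c(x,v(x))$.

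For uniqueness, let $(V,\rho)$ be any pair in the stated class. Integrating $\rho=\sL_{v}V+c(\cdot,v(\cdot))$ against $\eta_{v}$ and using the invariance identity $\int_{\Rd}\sL_{v}\varphi\,\D\eta_{v}=0$ gives $\rho=\int_{\Rd}c(y,v(y))\,\eta_{v}(\D y)=\rho_{v}$; since $V$ is only bounded below while $\sL_{v}V=\rho-c(\cdot,v(\cdot))$ is bounded, this identity is made rigorous by a cutoff/localization argument, controlling the remainders through $\inf_{\Rd}V>-\infty$. Next, fix $x$ with $\abs{x}>r$ and apply the It$\hat{\rm o}$-Krylov formula to $V$ along the $v$-process stopped at $\uuptau_{r}\wedge T$:
\[
\Exp_{x}^{v}\bigl[V(X_{\uuptau_{r}\wedge T})\bigr]-V(x)=\Exp_{x}^{v}\Bigl[\int_{0}^{\uuptau_{r}\wedge T}\bigl(\rho_{v}-c(X_{t},v(X_{t}))\bigr)\,\D t\Bigr].
\]
By positive recurrence $\uuptau_{r}<\infty$ almost surely and $\abs{X_{\uuptau_{r}}}=r$, so $V(X_{\uuptau_{r}})$ is bounded; letting $T\to\infty$ --- using $\inf_{\Rd}V>-\infty$ and near-monotonicity to dominate the negative part of the integrand, which lives in $\sB_{R_{0}}$ --- yields
\[
V(x)=\Exp_{x}^{v}\Bigl[\int_{0}^{\uuptau_{r}}\bigl(c(X_{t},v(X_{t}))-\rho_{v}\bigr)\,\D t\Bigr]+\Exp_{x}^{v}\bigl[V(X_{\uuptau_{r}})\bigr].
\]
Since $V$ is continuous with $V(0)=0$, $\bigl\lvert\Exp_{x}^{v}[V(X_{\uuptau_{r}})]\bigr\rvert\le\sup_{\abs{y}=r}\abs{V(y)}\to0$ as $r\downarrow0$, whence $V(x)=\lim_{r\downarrow0}\Exp_{x}^{v}\bigl[\int_{0}^{\uuptau_{r}}(c(X_{t},v(X_{t}))-\rho_{v})\,\D t\bigr]$. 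The right-hand side does not depend on the chosen solution, so $(V,\rho)$ is unique; and applied to the $V^{v}$ constructed above this is exactly formula \eqref{EErgonearPoisso1B}.

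The main obstacle is the uniform-in-$\alpha$ two-sided local bound on $\bar V_{\alpha}$: this is precisely where uniqueness for the elliptic equation alone is insufficient and one has to exploit the probabilistic/recurrence structure --- near-monotonicity relative to $\rho_{v}$ and positive recurrence keep $\cJ_{\alpha}^{v}(\cdot,c)-\cJ_{\alpha}^{v}(0,c)$ from running off to $-\infty$ as $\alpha\downarrow0$, and a Harnack estimate (suitably chained) upgrades this to a local $L^{\infty}$ bound. A secondary, recurring technicality is that $V^{v}$ is only bounded below --- neither bounded nor a priori $\eta_{v}$-integrable --- so every limit passage (the identity $\int\sL_{v}V\,\D\eta_{v}=0$ and the $T\to\infty$ limit in It$\hat{\rm o}$-Krylov) must be carried out through localization plus Fatou rather than dominated convergence.
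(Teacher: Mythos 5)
Your overall route --- vanishing discount for existence, identification of the constant through stability, and a hitting-time representation for uniqueness --- is the same as the paper's (which leans on \cite[Lemmas~3.6.1--3.6.9]{ABG-book}). The genuine gap is in your uniqueness step: for an arbitrary solution $V$ in the stated class (only bounded below, with no upper growth control and no $\eta_v$-integrability) you cannot obtain the \emph{equality} $V(x)=\Exp_x^v\bigl[\int_0^{\uuptau_r}(c(X_t,v(X_t))-\rho_v)\,\D t\bigr]+\Exp_x^v\bigl[V(X_{\uuptau_r})\bigr]$ by letting $T\to\infty$ in the It\^o--Krylov identity. Writing $\Exp_x^v[V(X_{\uuptau_r\wedge T})]=\Exp_x^v[V(X_{\uuptau_r})\Ind_{\{\uuptau_r\le T\}}]+\Exp_x^v[V(X_T)\Ind_{\{\uuptau_r>T\}}]$, the second term is bounded below by $(\inf_{\Rd}V)\,\Prob_x^v(\uuptau_r>T)\to0$, but there is no upper bound on $V(X_T)$ on the event $\{\uuptau_r>T\}$ (the same difficulty already appears when removing the spatial localization $\uptau_R$); near-monotonicity controls the integrand, not the boundary term. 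Fatou therefore yields only the inequality $V(x)\ge\limsup_{r\downarrow0}\Exp_x^v\bigl[\int_0^{\uuptau_r}(c(X_t,v(X_t))-\rho_v)\,\D t\bigr]$. Since your uniqueness conclusion (``the right-hand side does not depend on the chosen solution'') rests entirely on the equality, and since you also read off part (ii) for the constructed $V^v$ as a special case of the same claim, both uniqueness and (ii) are unproved as written. This is precisely the delicate point of the near-monotone setting (cf.\ the paper's remark on non-uniqueness and compatible pairs in \cite{AA13}): a solution bounded below need not lie in $\sorder(\Lyap)$, so no upper control along the process is available.

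The paper closes this gap differently: it establishes the equality \eqref{EErgonearPoisso1B} only for the specific vanishing-discount limit $V^v$ (by passing to the limit in the discounted representations, as in \cite[Lemma~3.6.9]{ABG-book}); for any other solution $\hat V$ in the class it derives just the inequality $\hat V(x)\ge\limsup_{r\downarrow0}\Exp_x^v\bigl[\int_0^{\uuptau_r}(c-\rho_v)\,\D t\bigr]$ (via $\uuptau_r\wedge\uptau_R$, the lower bound on $\hat V$, stability and Fatou), concludes $V^v-\hat V\le0$, and then, since $\sL_v(V^v-\hat V)=0$ and the difference vanishes at $x=0$, invokes the strong maximum principle to force $V^v\equiv\hat V$. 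Adding these two steps would repair your argument. Two smaller points: your preliminary step $\int_{\Rd}\sL_vV\,\D\eta_v=0$ is both unnecessary (the class already pins $\rho=\int c\,\D\eta_v$) and not justifiable for $V$ merely bounded below; and in the uniform-in-$\alpha$ bound you should make explicit the anchoring fact that $\inf_{\Rd}\cJ_\alpha^v=\inf_{\sB_{R_0}}\cJ_\alpha^v$ for a fixed ball $\sB_{R_0}$, which follows from $\alpha\int\cJ_\alpha^v\,\D\eta_v=\rho_v$ together with near-monotonicity, and is what allows the Harnack chain (and hence both the oscillation bound and the uniform lower bound on $\bar V_\alpha$) to start.
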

\begin{proof}
Since $c$ is bounded, we have $\left(\rho^{v} \,\df\,\right) \int_{\Rd}\int_{\Act} c(x, u)v(x)(\D{u})\eta_{v}(\D{x}) \leq \norm{c}_{\infty}$\,. In view of \cref{ENearmonotPoisso1}, by writing $\rho^v =\alpha \int J^v_{\alpha}(x, c)\eta_{v}(\D x)$ from \cite[Lemma~3.6.1]{ABG-book}, we have 
\begin{equation}\label{EErgonearPoisso1C}
\inf_{\kappa(\rho^v)}\cJ_{\alpha}^{v}(x, c) = \inf_{\Rd}\cJ_{\alpha}^{v}(x, c) \leq \frac{\rho^{v}}{\alpha}\,,
\end{equation} where $\kappa(\rho^v) \,\df\, \{x\in \Rd\mid \min_{\zeta\in\Act}c(x,\zeta) \leq \rho^v\}$ and $\cJ_{\alpha}^{v}(x, c)$ is the $\alpha$-discounted cost defined as in \cref{EDiscost}. As earlier, we have that $\cJ_{\alpha}^{v}(x, c)$ is a solution to the Poisson's equation (see, \cite[Lemma~A.3.7]{ABG-book})
\begin{equation}\label{EErgonearPoisso1D}
 \sL_{v}\cJ_{\alpha}^{v}(x, c) - \alpha \cJ_{\alpha}^{v}(x, c) = - c(x, v(x))\,.
\end{equation} Since $x\to\min_{\zeta\in\Act}c(x,\zeta)$ is continuous, we have $\kappa(\rho^v)$ is closed and \cref{ENearmonotPoisso1} implies that it is bounded. Therefore $\kappa(\rho^v)$ is compact and hence for some $R_0>0$, we have $\kappa(\rho^v)\subset \sB_{R_{0}}$\,. This gives us $\inf_{\sB_{R_{0}}} \cJ_{\alpha}^{v}(x, c) = \inf_{\Rd}\cJ_{\alpha}^{v}(x, c)$\,.

Thus, following the arguments as in \cite[Lemma~3.6.3]{ABG-book}, we deduce that for each $R> R_0$ there exist constants $\Tilde{C}_{2}(R), \Tilde{C}_{2}(R, p)$ depending only on $d, R_0$ such that
\begin{equation}\label{EErgonearPoisso1E}
\left(\osc_{\sB_{2R}} \cJ_{\alpha}^{v}(x, c) := \right)\sup_{\sB_{2R}} \cJ_{\alpha}^{v}(x, c) - \inf_{\sB_{2R}} \cJ_{\alpha}^{v}(x, c) \leq \Tilde{C}_{2}(R)\left(1 + \alpha\inf_{\sB_{R_0}}\cJ_{\alpha}^{v}(x, c) \right)\,,
\end{equation}
\begin{equation}\label{EErgonearPoisso1F}
\norm{\cJ_{\alpha}^{v}(\cdot, c) - \cJ_{\alpha}^{v}(0, c)}_{\Sob^{2,p}(\sB_R)}\leq \Tilde{C}_{2}(R, p) \left(1 + \alpha\inf_{\sB_{R_0}}\cJ_{\alpha}^{v}(x, c) \right)\,.
\end{equation}
Hence, by following the arguments as in \cite[Lemma~3.6.6]{ABG-book}, we conclude that there exists $(V^{v}, \Tilde{\rho}^v)\in \Sobl^{2,p}(\Rd)\times \RR$ such that along a subsequence (as $\alpha\to 0$), $\cJ_{\alpha}^{v}(\cdot, c) - \cJ_{\alpha}^{v}(0, c) \to V^{v}(\cdot)$ and $\alpha\cJ_{\alpha}^{v}(0, c) \to \Tilde{\rho}_{v}$ and the pair $(V^{v}, \Tilde{\rho}_v)$ satisfies
\begin{equation}\label{EErgonearPoisso1G}
 \sL_{v}V^{v}(x) + c(x, v(x)) = \Tilde{\rho}_{v}\,.
\end{equation} 
We will show that the subsequential limits are unique\,.

From \cref{EErgonearPoisso1C}, we get $\Tilde{\rho}_{v} \leq \rho^{v}$. Now, in view of estimates \cref{EErgonearPoisso1C} and \cref{EErgonearPoisso1F}, it is easy to see that
\begin{equation}\label{EErgonearPoisso1H}
\norm{V^{v}}_{\Sob^{2,p}(\sB_R)}\leq \Tilde{C}_{2}(R, p) \left(1 + M \right)\,.
\end{equation} Also, for each $x\in \Rd$, we have
\begin{align}\label{EErgonearPoisso1HLower}
V^{v}(x) &= \lim_{\alpha\to 0}\left(\cJ_{\alpha}^{v}(x, c) - \cJ_{\alpha}^{v}(0, c)\right) \geq \liminf_{\alpha\to 0} \left(\cJ_{\alpha}^{v}(x, c) - \inf_{\Rd}\cJ_{\alpha}^{v}(x, c) + \inf_{\Rd}\cJ_{\alpha}^{v}(x, c) - \cJ_{\alpha}^{v}(0, c)\right) \nonumber\\
&\geq - \limsup_{\alpha\to \infty} \left(\cJ_{\alpha}^{v}(0, c) - \inf_{\Rd}\cJ_{\alpha}^{v}(x, c)\right) + \liminf _{\alpha\to \infty} \left(\cJ_{\alpha}^{v}(x, c) - \inf_{\Rd}\cJ_{\alpha}^{v}(x, c)\right)\nonumber\\
&\geq - \limsup_{\alpha\to \infty} \left(\cJ_{\alpha}^{v}(0, c) - \inf_{\sB_{R_0}}\cJ_{\alpha}^{v}(x, c)\right) + \liminf _{\alpha\to \infty} \left(\cJ_{\alpha}^{v}(x, c) - \inf_{\Rd}\cJ_{\alpha}^{v}(x, c)\right)\nonumber\\
&\geq - \limsup_{\alpha\to \infty} \left(\osc_{\sB_{R_0}} \cJ_{\alpha}^{v}(x, c)\right);\quad \left(\text{since}\,\,\, \cJ_{\alpha}^{v}(x, c) - \inf_{\Rd}\cJ_{\alpha}^{v}(x, c) \geq 0 \right)\,,
\end{align} where in the third inequality we have used the fact that $\inf_{\sB_{R_0}} \cJ_{\alpha}^{v}(x, c) = \inf_{\Rd} \cJ_{\alpha}^{v}(x, c)$\,. Thus, from \cref{EErgonearPoisso1E}, we deduce that
\begin{equation}\label{EErgonearPoisso1I}
V^{v}\geq -\Tilde{C}_{2}(R_0) \left(1 + M \right)\,. 
\end{equation} This shows that $\inf_{\Rd} V^{v} > -\infty$\,.

Now, applying It$\hat{\rm o}$-Krylov formula and using \cref{EErgonearPoisso1G} we obtain
\begin{align*}
 \Exp_x^{v}\left[V^{v}\left(X_{T\wedge \uptau_{R}}\right)\right] - V^v(x)\,=\, \Exp_x^{v}\left[\int_0^{T\wedge \uptau_{R}} \left(\Tilde{\rho}_{v} - c(X_t, v(X_t))\right) \D{t}\right]\,.
\end{align*} This implies
\begin{align*}
 \inf_{y\in\Rd}V^{v}(y) - V^v(x)\,\leq\, \Exp_x^{v}\left[\int_0^{T\wedge \uptau_{R}} \left(\Tilde{\rho}_{v} - c(X_t, v(X_t))\right) \D{t}\right]\,.
\end{align*}Since $v$ is stable, letting $R\to \infty$, we get
\begin{align*}
 \inf_{y\in\Rd}V^{v}(y) - V^v(x)\,\leq\, \Exp_x^{v}\left[\int_0^{T} \left(\Tilde{\rho}_{v} - c(X_t, v(X_t))\right) \D{t}\right]\,.
\end{align*}Now dividing both sides of the above inequality by $T$ and letting $T\to \infty$, it follows that
\begin{align*}
 \limsup_{T\to \infty}\frac{1}{T}\Exp_x^{v}\left[\int_0^{T} c(X_t, v(X_t)) \D{t}\right] \,\leq\, \Tilde{\rho}_{v}\,.
\end{align*} Thus, $\rho^v \leq \Tilde{\rho}_{v}$. This indeed implies that $\rho^v = \Tilde{\rho}_{v}$\,. The representation \cref{EErgonearPoisso1B} of $V^v$ follows by closely mimicking the argument of \cite[Lemma~3.6.9]{ABG-book}. Therefore, we have a solution pair $(V^v, \rho_v)$ to \cref{EErgonearPoisso1A} satisfying (i) and (ii). 

Next we want to prove that the solution pair is unique. To this end, let $(\hat{V}^v, \hat{\rho}_v)\in \Sobl^{2,p}(\Rd)\times \RR$, \, $1< p < \infty$, with $\hat{V}^v(0) = 0$, $\inf_{\Rd} \hat{V}^v > -\infty$ and $\hat{\rho}_v = \int_{\Rd}\int_{\Act} c(x, u)v(x)(\D{u})\eta_{v}(\D{x})$, satisfying
\begin{equation}\label{EErgonearPoisso1J}
\hat{\rho}_v = \left[\sL_{v}\hat{V}^v(x) + c(x, v(x))\right]
\end{equation}   
Since $\hat{V}^v$ is bounded from below, applying It$\hat{\rm o}$-Krylov formula and using \cref{EErgonearPoisso1J} we get
\begin{align}\label{EErgonearPoisso1L}
\limsup_{T\to \infty}\frac{1}{T}\Exp_x^{v}\left[\int_0^{T} c(X_t, v(X_t)) \D{t}\right] \,\leq\, \hat{\rho}_{v}
\end{align} Hence, from \cref{EErgonearPoisso1L}, it follows that
\begin{align}\label{EErgonearPoisso1M}
\hat{\rho}_{v} = \int_{\Rd}\int_{\Act} c(x, u)v(x)(\D{u})\eta_{v}(\D{x}) \leq \limsup_{T\to \infty}\frac{1}{T}\Exp_x^{v}\left[\int_0^{T} c(X_t, v(X_t)) \D{t}\right] \,\leq\, \hat{\rho}_{v}
\end{align} This implies that $\hat{\rho}^{v} = \rho_{v}$\,. Now, applying It$\hat{\rm o}$-Krylov formula and using \cref{EErgonearPoisso1J}, we obtain
\begin{align}\label{EErgonearPoisso1N}
\hat{V}^v(x)\,=\, \Exp_x^{v}\left[\int_0^{\uuptau_{r}\wedge \uptau_{R}} \left(c(X_t, v(X_t)) - \hat{\rho}_{v}\right) \D{t} + \hat{V}^{v}\left(X_{\uuptau_{r}\wedge \uptau_{R}}\right)\right]\,.
\end{align} Since $v$ is stable and $\hat{V}^v$ is bounded from below, for all $x\in \Rd$ we obtain
\begin{equation*}
\liminf_{R\to\infty}\Exp_x^{v}\left[\hat{V}^{v}\left(X_{\uptau_{R}}\right)\Ind_{\{\uuptau_{r}\geq \uptau_{R}\}}\right]\geq \liminf_{R\to\infty}\left(\inf_{\Rd}\hat{V}^{v}\right)\mathbb{P}_{x}\left(\uuptau_{r}\geq \uptau_{R}\right) = 0\,.
\end{equation*} In the above we have used the fact that $\uptau_{R}\to\infty$ as $R\to \infty$ and $\Exp_x^{v}\left[\uuptau_{r}\right] < \infty$\,.

Again, since $v$ is stable we have $\Exp_x^{v}\left[\uuptau_{r}\right] < \infty$ (see \cite[Theorem~2.6.10]{ABG-book})\,. Hence, letting $R\to\infty$ by Fatou's lemma from \cref{EErgonearPoisso1N},  it follows that
\begin{align*}
\hat{V}^v(x)&\,\geq\, \Exp_x^{v}\left[\int_0^{\uuptau_{r}} \left(c(X_t, v(X_t)) - \hat{\rho}_{v}\right) \D{t} +\hat{V}^{v}\left(X_{\uuptau_{r}}\right)\right]\nonumber\\
&\,\geq\, \Exp_x^{v}\left[\int_0^{\uuptau_{r}} \left(c(X_t, v(X_t)) - \hat{\rho}_{v}\right) \D{t}\right] +\inf_{\sB_r}\hat{V}^{v}\,.
\end{align*}Since $\hat{V}^{v}(0) =0$, letting $r\to 0$, we deduce that
\begin{align}\label{EErgonearPoisso1o}
\hat{V}^v(x)\,\geq\, \limsup_{r\downarrow 0}\Exp_x^{v}\left[\int_0^{\uuptau_{r}} \left(c(X_t, v(X_t)) - \hat{\rho}_{v}\right) \D{t} \right]\,.
\end{align}
From \cref{EErgonearPoisso1B} and \cref{EErgonearPoisso1o}, it is easy to see that $V^v - \hat{V}^v \leq 0$ in $\Rd$. On the other hand by \cref{EErgonearPoisso1A} and \cref{EErgonearPoisso1J} one has $\sL_{v}\left(V^v - \hat{V}^v\right)(x)\geq 0$ in $\Rd$. Hence, applying the strong maximum principle \cite[Theorem~9.6]{GilTru}, one has $V^v = \hat{V}^v$. This proves uniqueness.
\end{proof}
Now we prove the continuity of ergodic cost under near-monotonicity assumption on the running cost function. 
\begin{theorem}\label{ergodicnearmono1}
Suppose that Assumptions (A1)-(A4) hold. Let $\{v_n\}_n$ be a sequence of stable policies such that $v_n \to v$ in $\Usm$\, and $\{\eta_{v_n}\}_n$ tight. If $$\sup_{n}\sE_x(c, v_n) < \liminf_{\norm{x}\to\infty}\inf_{\zeta\in \Act} c(x,\zeta),$$ then we have the following
\begin{equation}\label{EErgonearOpt1A}
\inf_{\Rd}\sE_x(c, v_n) \to \inf_{\Rd}\sE_x(c, v)\quad \text{as}\,\,\,  n\to\infty\,.
\end{equation} 
\end{theorem}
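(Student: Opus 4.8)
The plan is to reduce the claim, via \cref{NearmonotPoisso}(i), to the convergence of the ergodic constants, and to obtain that convergence from the convergence of the invariant measures (this is where, as announced, the near‑monotone case departs from the discounted case treated in \cref{T1.1}). Since $\inf_{\Rd}\sE_x(c,v_n)\le\sup_n\sE_x(c,v_n)<\liminf_{\norm{x}\to\infty}\inf_{\zeta\in\Act}c(x,\zeta)$, \cref{NearmonotPoisso} applies to each $v_n$ and gives $\inf_{\Rd}\sE_x(c,v_n)=\rho_{v_n}$, where for a stable $w$ we write $\rho_{w}\df\int_{\Rd}\int_{\Act}c(x,u)\,w(x)(\D u)\,\eta_{w}(\D x)$. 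Hence the theorem follows once we show (i) that $v$ is itself stable, so that \cref{NearmonotPoisso} applies to it and yields $\inf_{\Rd}\sE_x(c,v)=\rho_v$, and (ii) that $\rho_{v_n}\to\rho_v$; both will be consequences of $\eta_{v_n}\to\eta_v$, which we will upgrade to convergence of the invariant densities in $\Lp^1(\Rd)$. As a preliminary, by nondegeneracy each $\eta_{v_n}$ has a density $\varphi_{v_n}\in\Sobl^{1,p}(\Rd)$ solving the stationary (adjoint) equation $\int_{\Rd}\sL_{v_n}f\,\varphi_{v_n}\,\D x=0$ for every $f\in\cC_c^\infty(\Rd)$ (see \cite[Section~3.2]{ABG-book}); interior Harnack and Sobolev estimates for this equation give $\sup_{\sB_R}\varphi_{v_n}\le\kappa(R)\,\eta_{v_n}(\sB_{2R})\le\kappa(R)$ and $\norm{\varphi_{v_n}}_{\Sob^{1,p}(\sB_R)}\le\kappa'(R,p)$ with constants independent of $n$.

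By a diagonal argument over $R$ together with the compact embedding $\Sob^{1,p}(\sB_R)\hookrightarrow\cC^{0,\beta}(\bar{\sB}_R)$, along a subsequence $\varphi_{v_n}\to\varphi^*$ in $\cC^{0,\beta}_{\mathrm{loc}}(\Rd)$, hence in $\Lp^1_{\mathrm{loc}}(\Rd)$ and, along a further subsequence, a.e.\ in $\Rd$. I would then pass to the limit in $\int_{\Rd}\sL_{v_n}f\,\varphi_{v_n}\,\D x=0$: the second‑order term $\int_{\Rd}\trace(a\grad^2 f)\,\varphi_{v_n}\,\D x$ converges because $\trace(a\grad^2 f)$ is bounded with compact support, while for the drift term $\sum_i\int_{\Rd}\bigl(\int_{\Act}b^i(x,u)v_n(x)(\D u)\bigr)\partial_i f(x)\,\varphi_{v_n}(x)\,\D x$ I split $\varphi_{v_n}=(\varphi_{v_n}-\varphi^*)+\varphi^*$; the first piece vanishes by local boundedness of $b$ and $\Lp^1_{\mathrm{loc}}$ convergence, and the second piece is handled by the defining property of the Borkar topology with test function $\varphi^*\partial_i f\in\Lp^1(\Rd)\cap\Lp^2(\Rd)$ (note $\varphi^*$ is locally bounded) and $g=b^i\chi\in\cC_b(\Rd\times\Act)$, where $\chi\in\cC_c^\infty(\Rd)$ equals $1$ on $\supp f$. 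This gives $\int_{\Rd}\sL_v f\,\varphi^*\,\D x=0$ for all $f\in\cC_c^\infty(\Rd)$, i.e.\ $\varphi^*\,\D x$ is an invariant measure for the diffusion under $v$. The tightness hypothesis rules out escape of mass: given $\epsilon>0$, choosing $R$ with $\eta_{v_n}(\sB_R^c)<\epsilon$ for all $n$ and passing to the limit in $\eta_{v_n}(\sB_R)\ge1-\epsilon$ gives $\int_{\sB_R}\varphi^*\ge1-\epsilon$, while $\int_{\Rd}\varphi^*\le1$ by Fatou; hence $\int_{\Rd}\varphi^*=1$ and $\varphi^*\,\D x$ is an invariant probability measure. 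By nondegeneracy such a measure is unique, so $v$ is stable and $\varphi^*=\varphi_v$; since the limit does not depend on the subsequence, $\varphi_{v_n}\to\varphi_v$ in $\Lp^1_{\mathrm{loc}}(\Rd)$ and then, by Scheff\'e's lemma (nonnegativity, unit masses, a.e.\ convergence along subsequences), in $\Lp^1(\Rd)$.

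Finally I would estimate $\rho_{v_n}-\rho_v$. Writing $\rho_{v_n}=\int_{\Rd}\int_{\Act}c(x,u)v_n(x)(\D u)\,\varphi_{v_n}(x)\,\D x$ and using $\abs{c}\le M$, replacing $\varphi_{v_n}$ by $\varphi_v$ costs at most $M\norm{\varphi_{v_n}-\varphi_v}_{\Lp^1(\Rd)}\to0$, and the remaining term converges, $\int_{\Rd}\varphi_v(x)\int_{\Act}c(x,u)v_n(x)(\D u)\,\D x\to\int_{\Rd}\varphi_v(x)\int_{\Act}c(x,u)v(x)(\D u)\,\D x$, by splitting $\Rd=\sB_R\cup\sB_R^c$ (the tail bounded by $2M\,\eta_v(\sB_R^c)$, small for large $R$) and invoking the Borkar topology on $\sB_R$ with $f=\varphi_v\Ind_{\sB_R}\in\Lp^1(\Rd)\cap\Lp^2(\Rd)$ and $g=c\in\cC_b(\Rd\times\Act)$. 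Thus $\rho_{v_n}\to\rho_v$; since $\rho_v=\lim_n\rho_{v_n}\le\sup_n\sE_x(c,v_n)<\liminf_{\norm{x}\to\infty}\inf_{\zeta}c(x,\zeta)$ and $\inf_{\Rd}\sE_x(c,v)\le\rho_v$ (the ergodic cost of a stable control coincides with its stationary average), \cref{NearmonotPoisso} indeed applies to $v$ and gives $\inf_{\Rd}\sE_x(c,v)=\rho_v$, which closes the reduction and hence proves the theorem. I expect the main obstacle to be precisely the passage to the limit in the stationary Fokker--Planck equation: one must combine uniform interior regularity of the densities $\varphi_{v_n}$ (to ``freeze'' the density), the Borkar topology (to absorb the $n$‑dependence of the drift $b(\cdot,v_n(\cdot))$), and the tightness hypothesis (to prevent loss of mass at infinity); once $\eta_{v_n}\to\eta_v$ is in hand, the remaining steps are routine approximation arguments.
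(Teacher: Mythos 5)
Your proposal is correct, and it follows the paper's overall strategy: reduce the claim via \cref{NearmonotPoisso}(i) to the convergence of the stationary averages $\rho_{v_n}\to\rho_v$, obtain $L^1$-convergence of the invariant densities, and finish with exactly the same splitting argument (replace $\varphi_{v_n}$ by $\varphi_v$ at cost $M\norm{\varphi_{v_n}-\varphi_v}_{L^1}$, then use the Borkar pairing for the remaining term). The difference is in how the key middle step is obtained: the paper simply cites \cite[Lemma~4.4]{AB-10Uni} to get $\eta_{v_n}\to\eta_v$ in total variation under the tightness hypothesis, together with \cite[Lemma~3.2.5]{ABG-book} for the $L^1$-convergence of densities, whereas you re-derive this convergence from scratch — uniform interior Harnack/Sobolev estimates for the stationary Fokker--Planck densities, a diagonal compactness argument, passage to the limit in the adjoint equation where the Borkar topology absorbs the $n$-dependence of the drift, tightness to prevent mass escape, uniqueness of the invariant probability measure under nondegeneracy, and Scheff\'e's lemma. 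Your route is longer but self-contained, and it has two small advantages over the paper's write-up: it makes explicit that the limit policy $v$ is stable and satisfies the near-monotonicity condition \cref{ENearmonotPoisso1} (so that \cref{NearmonotPoisso} legitimately applies to $v$, a point the paper leaves implicit), and your cutoff $f=\varphi_v\Ind_{\sB_R}$ in the final Borkar-topology step is more careful than the paper's use of $\varphi$ itself, since membership of the density in $L^2(\Rd)$ is not obvious while local boundedness on $\sB_R$ is. The paper's approach buys brevity by outsourcing the continuity of $v\mapsto\eta_v$ on tight families to the cited lemma; yours reproves that continuity in the special case needed here.
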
 
\begin{proof}
From Theorem~\ref{NearmonotPoisso}, we know that for each $n\in\NN$ there exists $(V^{v_n}, \rho_{v_n})\in \Sobl^{2,p}(\Rd)\times \RR$, \, $1< p < \infty$, with $V^{v_n}(0) = 0$ and $\inf_{\Rd} V^{v_n} > -\infty$, satisfying
\begin{equation}\label{EErgoContnuity1A}
\rho_{v_n} = \sL_{v_n}V^{v_n}(x) + c(x, v_n(x))\,,
\end{equation} where $\rho_{v_n} = \int_{\Rd}\int_{\Act} c(x, u)v_n(x)(\D{u})\eta_{v_n}(\D{x}) = \inf_{\Rd}\sE_x(c, v_n)$\,. Now from \cite[Lemma~4.4]{AB-10Uni}, since we impose tightness apriori, we deduce that $\eta_{v_n} \to \eta_{v}$ in total variation topology. Hence the associated densities $\varphi_{v_n}\to \varphi$ in $\Lp^1(\Rd)$ (see the proof of \cite[Lemma~3.2.5]{ABG-book}). Note that
\begin{align}
&\int_{\Rd}\int_{\Act} c(x,\zeta)v_{n}(x)(\D \zeta)\eta_{v_{n}}(\D x)\, -\int_{\Rd}\int_{\Act} c(x,\zeta) v(x)(\D \zeta)\eta_{v}(\D x) \nonumber \\
& =  \bigg(\int_{\Rd}\int_{\Act} c(x,\zeta)v_{n}(x)(\D \zeta)\varphi_{v_n}(x)\D x - \int_{\Rd}\int_{\Act} c(x,\zeta)v_{n}(x)(\D \zeta)\varphi(x)\D x \bigg) \nonumber \\
&\quad + \bigg(\int_{\Rd}\int_{\Act} c(x,\zeta)v_{n}(x)(\D \zeta)\varphi(x)\D x -\int_{\Rd}\int_{\Act} c(x,\zeta)v(x)(\D \zeta)\varphi(x)\D x \bigg)
\end{align}
Since $c$ is bounded, the first term of the right hand side converges to zero since $\varphi_{v_n}\to \varphi$ in $\Lp^1(\Rd)$ and the second term converges to zero by the convergence of $v_n\to v$ (see Definition~\ref{DefBorkarTopology1A})\,. Hence, it follows that $\int_{\Rd}\int_{\Act} c(x, u)v_n(x)(\D{u})\eta_{v_n}(\D{x}) \to \int_{\Rd}\int_{\Act} c(x, u)v(x)(\D{u})\eta_{v}(\D{x})$\,. Thus, in view of Theorem~\ref{NearmonotPoisso}, we obtain $\inf_{\Rd}\sE_x(c, v_n) \to \inf_{\Rd}\sE_x(c, v)$ as $n\to \infty$\,. This completes the proof.
\end{proof}
\begin{remark}
The tightness assumption is not superfluous. In view of \cite{AA13}, we know that the map $v\mapsto \inf_{\Rd}\sE_x(c, v)$ in general may not be continuous on $\Usm$ under near-monotone cost criterion (of the form, \cref{EErgonearOpt1A})\,. The reason is the following: for each $n\in\NN$ let $(V^{v_n}, \rho_{v_n})$ be the unique compatible solution pair (see, \cite[Definition ~1.1]{AA13}) of the equation \cref{EErgoContnuity1A}, if $(V^{v_n}, \rho_{v_n})$ converge to a solution pair $(\bar{V}, \bar{\rho})$ of the limiting equation of \cref{EErgoContnuity1A} as $n\to \infty$, the solution pair $(\bar{V}, \bar{\rho})$ may not necessarily be compatible (see, \cite{AA13}). One sufficient condition which ensure this continuity is the tightness of the space of corresponding invariant measures $\{\eta_{v_n}: n\in\NN\}$\,.  
\end{remark}
\subsubsection{Under Lyapunov stability}\label{Lyapunov stability}
In this section we study the continuity of ergodic cost criterion under Lyapunov stability assumption. We assume the following Lyapunov stability condition on the dynamics.
\begin{itemize}
\item[\hypertarget{A5}{{(A5)}}]
There exists a positive constant $\widehat{C}_0$, and a pair of inf-compact  functions $(\Lyap, h)\in \cC^{2}(\Rd)\times\cC(\Rd\times\Act)$ (i.e., the sub-level sets $\{\Lyap\leq k\} \,,\{h\leq k\}$ are compact or empty sets in $\Rd$\,, $\Rd\times\Act$ respectively for each $k\in\RR$) such that
\begin{equation}\label{Lyap1}
\sL_{\zeta}\Lyap(x) \leq \widehat{C}_{0} - h(x,\zeta)\quad \text{for all}\,\,\, (x,\zeta)\in \Rd\times \Act\,,
\end{equation} where  $h$ ($>0$) is locally Lipschitz continuous in its first argument uniformly with respect to the second and $\Lyap > 1$.
\end{itemize} 
A function $f\in\mathcal{O}(\Lyap)$ if $f\leq \widehat{C}_1 \Lyap$ for some positive constant $\widehat{C}_1$\, and $f \in \sorder(\Lyap)$ if $\displaystyle{\limsup_{|x|\to \infty} \frac{|f|}{\Lyap} = 0}$\,.
%As a consequence of the above assumption we have the following estimate (see, \cite[Lemma~2.5.5]{ABG-book})
%\begin{lemma}
%Assumption (A5) implies that
%\begin{equation*}
%\Exp_{x}^{v}\left[\Lyap(X_t)\right] \leq \frac{\widehat{C}_0}{\widehat{C}_1} + \Lyap(x)e^{-\widehat{C}_1 t} \quad\text{for all}\,\,\, v\in\Usm\,. 
%\end{equation*}
%\end{lemma}
Now following \cite[Lemma~3.7.8]{ABG-book}, we want to prove that a certain equation admits a unique solution in some suitable function space. This uniqueness result is crucial to obtain continuity of the map $v\to \sE_x(c, v)$ on $\Usm$\,. 
\begin{theorem}\label{TErgoExis1}
Suppose that Assumptions (A1)-(A3) and (A5) hold. Then for each $v\in \Usm$ there exist a unique solution pair $(\widehat{V}^v, \widehat{\rho}^{v})\in \Sobl^{2,p}(\Rd)\cap \sorder(\Lyap)\times \RR$ for any $p >1$ satisfying
\begin{equation}\label{EErgoOpt1A}
\widehat{\rho}^{v} = \sL_{v}\widehat{V}^v(x) + c(x, v(x))\quad\text{with}\quad \widehat{V}^v(0) = 0\,.
\end{equation}
Furthermore, we have
\begin{itemize}
\item[(i)]$\widehat{\rho}^{v} = \sE_x(c, v)$
\item[(ii)] for all $x\in\Rd$, we have
\begin{equation}\label{EErgoOpt1C}
\widehat{V}^v(x) \,=\, \lim_{r\downarrow 0}\Exp_{x}^{v}\left[\int_{0}^{\uuptau_{r}} \left( c(X_t, v(X_t)) - \sE_x(c, v)\right)\D t\right]\,.
\end{equation}
\end{itemize} 
\end{theorem}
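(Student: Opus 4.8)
The plan is to run the vanishing-discount procedure of \cite[Lemma~3.7.8]{ABG-book} for the fixed stationary control $v$. Fix $v\in\Usm$. The Lyapunov inequality \cref{Lyap1}, applied through the It\^o--Krylov formula to $\Lyap$, shows that the diffusion controlled by $v$ is positive recurrent with a unique invariant probability measure $\eta_v$, and that $h(\cdot,v(\cdot))$ and hence the bounded $c(\cdot,v(\cdot))$ are $\eta_v$-integrable. For $\alpha>0$ let $\cJ_{\alpha}^{v}(\cdot,c)$ be the $\alpha$-discounted cost, which by \cite[Lemma~A.3.7]{ABG-book} solves $\sL_{v}\cJ_{\alpha}^{v}(x,c)-\alpha\cJ_{\alpha}^{v}(x,c)=-c(x,v(x))$, and set $\bar V_{\alpha}(x)\df\cJ_{\alpha}^{v}(x,c)-\cJ_{\alpha}^{v}(0,c)$; since $c$ is bounded, $\alpha\cJ_{\alpha}^{v}(0,c)\le\norm{c}_{\infty}$, so this family of reals is bounded as $\alpha\downarrow0$.

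The first real step is a uniform-in-$\alpha$ growth bound $\abs{\bar V_{\alpha}(x)}\le\widehat C(1+\Lyap(x))$ and, more sharply, $\bar V_{\alpha}\in\sorder(\Lyap)$ uniformly in $\alpha$. This follows by combining the stochastic representation of $\cJ_{\alpha}^{v}$, the boundedness of $c$, and the Lyapunov estimate: Dynkin's formula applied to $\Lyap$ with \cref{Lyap1} bounds $\Exp_x^v[\uuptau_r]$ and $\Exp_x^v[\int_0^{\uuptau_r}h(X_t,v(X_t))\D t]$ by a multiple of $1+\Lyap(x)$, and since $c$ is bounded while $h$ is inf-compact (so $c\in\sorder(h)$) one upgrades the resulting $\order(\Lyap)$ bound to $\sorder(\Lyap)$. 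Then the interior elliptic estimate \cite[Theorem~9.11]{GilTru} applied to the discounted Poisson equation makes $\{\bar V_{\alpha}\}$ bounded in $\Sob^{2,p}(\sB_R)$ for every $p>1$ and $R>0$, uniformly for small $\alpha$. A diagonalization/Banach--Alaoglu argument (as in the proof of Theorem~\ref{T1.1}) extracts $\alpha_k\downarrow0$ with $\bar V_{\alpha_k}\to\widehat V^v$ weakly in $\Sobl^{2,p}(\Rd)$ and strongly in $\cC^{1,\beta}_{\mathrm{loc}}(\Rd)$, and $\alpha_k\cJ_{\alpha_k}^{v}(0,c)\to\widehat\rho^{v}$; passing to the limit in the equation against test functions, exactly as in Theorem~\ref{T1.1}, gives $\sL_{v}\widehat V^v(x)+c(x,v(x))=\widehat\rho^{v}$ with $\widehat V^v(0)=0$ and $\widehat V^v\in\Sobl^{2,p}(\Rd)\cap\sorder(\Lyap)$ for every $p>1$.

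For (i) and (ii) I would argue as in Theorem~\ref{NearmonotPoisso}. Applying It\^o--Krylov to $\widehat V^v$ on $[0,T\wedge\uptau_R]$ and using the equation yields $\Exp_x^v[\widehat V^v(X_{T\wedge\uptau_R})]-\widehat V^v(x)=\Exp_x^v[\int_0^{T\wedge\uptau_R}(\widehat\rho^{v}-c(X_t,v(X_t)))\D t]$; letting $R\to\infty$ (the boundary term vanishes because $\widehat V^v\in\sorder(\Lyap)$ while $\Lyap$ is a supermartingale outside a compact set, by \cref{Lyap1} and inf-compactness of $h$), dividing by $T$ and letting $T\to\infty$, one gets $\widehat\rho^{v}=\limsup_{T\to\infty}\tfrac1T\Exp_x^v[\int_0^{T}c(X_t,v(X_t))\D t]=\sE_x(c,v)$ for every $x$, so this common value equals $\int c\,\D\eta_v$. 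The representation \cref{EErgoOpt1C} then follows by applying It\^o--Krylov on $[0,\uuptau_r\wedge\uptau_R]$, letting $R\to\infty$ with the same Lyapunov control and Fatou's lemma, and then $r\downarrow0$ using $\widehat V^v(0)=0$ and continuity of $\widehat V^v$, just as in \cite[Lemma~3.6.9]{ABG-book}.

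For uniqueness, let $(\widehat V_1,\widehat\rho_1)$ and $(\widehat V_2,\widehat\rho_2)$ be two solution pairs with the stated properties. The previous paragraph gives $\widehat\rho_1=\sE_x(c,v)=\widehat\rho_2$. Then $w\df\widehat V_1-\widehat V_2\in\Sobl^{2,p}(\Rd)\cap\sorder(\Lyap)$ satisfies $\sL_{v}w=0$ and $w(0)=0$, so $w(X_{t\wedge\uuptau_r\wedge\uptau_R})$ is a martingale and $w(x)=\Exp_x^v[w(X_{\uuptau_r\wedge\uptau_R})]$; because $w\in\sorder(\Lyap)$ and the Lyapunov property of $\Lyap$ forces $\Exp_x^v[\abs{w(X_{\uptau_R})}\Ind_{\{\uptau_R<\uuptau_r\}}]\to0$ as $R\to\infty$, we obtain $w(x)=\Exp_x^v[w(X_{\uuptau_r})]$, and letting $r\downarrow0$ with $w$ continuous and $w(0)=0$ gives $w\equiv0$. (Alternatively, since $\sL_v(w-\varepsilon\Lyap)\ge\varepsilon(h-\widehat C_0)>0$ outside a compact set and $w-\varepsilon\Lyap\to-\infty$ at infinity, the maximum principle \cite[Theorem~9.6]{GilTru} yields $w\le\varepsilon\Lyap$ for all $\varepsilon>0$, and symmetrically $w\ge-\varepsilon\Lyap$.) I expect the main obstacle throughout to be the bookkeeping of these Lyapunov estimates: producing the uniform-in-$\alpha$ $\sorder(\Lyap)$ bound on $\bar V_\alpha$ that survives into the limit, and symmetrically killing the $\uptau_R$-boundary terms in the representation and uniqueness steps — both rely on playing the $\sorder(\Lyap)$ growth of the value function against the inf-compactness of $h$ in \cref{Lyap1}.
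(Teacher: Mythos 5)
Your proposal is correct in substance and follows the paper's overall strategy for most steps: existence via the vanishing-discount limit (which the paper simply outsources to \cite[Lemma~3.7.8]{ABG-book} rather than re-deriving the uniform $\sorder(\Lyap)$ and $\Sob^{2,p}$ bounds as you do), and the identification $\widehat{\rho}^{v}=\sE_x(c,v)$ by It\^o--Krylov on $[0,T\wedge\uptau_R]$, killing the $\uptau_R$ boundary term through $\sorder(\Lyap)$ together with the Lyapunov bound on $\Exp_x^v[\Lyap(X_{T\wedge\uptau_R})]$ (this is exactly the paper's use of \cite[Lemma~3.7.2~(ii)]{ABG-book}), then dividing by $T$. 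Where you genuinely diverge is the uniqueness of the function: the paper takes a second solution $\bar V^v$, derives via Fatou only the one-sided bound $\bar V^v(x)\ge\limsup_{r\downarrow0}\Exp_x^v\bigl[\int_0^{\uuptau_r}(c-\bar\rho^v)\,\D t\bigr]$, compares it with the stochastic representation (ii) of $\widehat V^v$ to get $\widehat V^v-\bar V^v\le0$, and finishes with the strong maximum principle; you instead set $w=\widehat V_1-\widehat V_2$, note $\sL_v w=0$ with $w\in\sorder(\Lyap)$, obtain $w(x)=\Exp_x^v[w(X_{\uuptau_r\wedge\uptau_R})]$, kill the $\uptau_R$ term with the same estimate $\Exp_x^v[\Lyap(X_{\uptau_R})\Ind_{\{\uptau_R\le\uuptau_r\}}]\le\widehat C_0\Exp_x^v[\uuptau_r]+\Lyap(x)$, and let $r\downarrow0$ using continuity and $w(0)=0$. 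Your martingale route is valid (stability of every $v\in\Usm$ under (A5) gives $\Exp_x^v[\uuptau_r]<\infty$, and $w(X_{\uuptau_r})$ is bounded, so the limits go through) and is arguably cleaner, since it needs neither the representation formula nor the maximum principle. One caveat: your parenthetical alternative is stated too loosely --- from $\sL_v(w-\varepsilon\Lyap)\ge0$ only outside the compact set $\{h\le\widehat C_0\}$ and decay at infinity, the maximum principle gives $\sup_{\Rd}(w-\varepsilon\Lyap)=\max_{K}(w-\varepsilon\Lyap)$, not $w\le\varepsilon\Lyap$; one would still need a strong-maximum-principle step (the global maximum of $w$ is attained in $K$, hence $w$ is constant, hence $w\equiv0$) to close it. Since this is only an aside and your primary argument stands, the proof as a whole is sound.
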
 
\begin{proof}
Existence of a solution pair $(\widehat{V}^v, \widehat{\rho}^{v})\in \Sobl^{2,p}(\Rd)\cap \sorder(\Lyap)\times \RR$ for any $p >1$ satisfying (i) and (ii) follows from \cite[Lemma~3.7.8]{ABG-book}\,. Also, it is known that along a subsequence $\alpha\cJ_{\alpha}^{v}(0, c)\to \widehat{\rho}^{v}$ and $\cJ_{\alpha}^{v}(x, c) - \cJ_{\alpha}^{v}(0, c)\to \widehat{V}^v$ uniformly over compact subsets of $\Rd$ (see \cite[Lemma~3.7.8 (i)]{ABG-book})\,.

Next we show that the sub-sequential limits are unique\,. This indeed imply the uniqueness of the solutions. Let $(\bar{V}^v, \bar{\rho}^{v})\in \Sobl^{2,p}(\Rd)\cap \sorder(\Lyap)\times \RR$ for any $p >1$ be any other solution pair of \cref{EErgoOpt1A} with $\bar{V}^v(0) = 0$. Thus, by It$\hat{\rm o}$-Krylov formula, for $R>0$ we obtain
\begin{align}\label{ETErgoExis1A}
\Exp_{x}^{v}\left[\bar{V}^v(X_{T\wedge\uptau_{R}})\right] - \bar{V}^v(x) &= \Exp_{x}^{v}\left[\int_{0}^{T\wedge\uptau_{R}} \sL_{v} \bar{V}^v(X_s) \D s\right]\nonumber\\
& = \Exp_{x}^{v}\left[\int_{0}^{T\wedge\uptau_{R}} \left(\bar{\rho}^{v} - c(X_s, v(X_s))\right)\D s \right]\,.  
\end{align}
Note that 
\begin{equation*}
\int_{0}^{T\wedge\uptau_{R}} \left(\bar{\rho}^{v} - c(X_s, v(X_s))\right)\D s = \int_{0}^{T\wedge\uptau_{R}} \bar{\rho}^{v} - \int_{0}^{T\wedge\uptau_{R}}c(X_s, v(X_s))\D s
\end{equation*} Thus, letting $R\to \infty$ by monotone convergence theorem, we get
\begin{equation*}
\lim_{R\to\infty}\Exp_{x}^{v}\left[\int_{0}^{T\wedge\uptau_{R}} \left(\bar{\rho}^{v} - c(X_s, v(X_s))\right)\D s \right] = \Exp_{x}^{v}\left[\int_{0}^{T} \left(\bar{\rho}^{v} - c(X_s, v(X_s))\right)\D s \right]\,.
\end{equation*} Since $\bar{V}^v \in \sorder{(\Lyap)}$, in view of \cite[Lemma~3.7.2 (ii)]{ABG-book}, letting $R\to\infty$, we deduce that
\begin{align}\label{ETErgoExis1BA}
\Exp_{x}^{v}\left[\bar{V}^v(X_{T})\right] - \bar{V}^v(x) = \Exp_{x}^{v}\left[\int_{0}^{T} \left(\bar{\rho}^{v} - c(X_s, v(X_s))\right)\D s \right]\,.  
\end{align} Also, from \cite[Lemma~3.7.2 (ii)]{ABG-book}, we have
\begin{equation*}
\lim_{T\to\infty}\frac{\Exp_{x}^{v}\left[\bar{V}^v(X_{T})\right]}{T} = 0\,. 
\end{equation*}
Hence, dividing both sides of \cref{ETErgoExis1BA} by $T$ and letting $T\to\infty$, we obtain
\begin{align*}
\bar{\rho}^{v} = \limsup_{T\to \infty}\frac{1}{T}\Exp_{x}^{v}\left[\int_{0}^{T} \left(c(X_s, v(X_s))\right)\D s \right]\,.
\end{align*}This implies that $\bar{\rho}^{v} = \widehat{\rho}^{v}$\,. Again, applying It$\hat{\rm o}$-Krylov formula and using \cref{EErgoOpt1A}, we have
\begin{align}\label{ETErgoExis1B}
\bar{V}^v(x)\,=\, \Exp_x^{v}\left[\int_0^{\uuptau_{r}\wedge \uptau_{R}} \left(c(X_t, v(X_t)) - \bar{\rho}^{v}\right) \D{t} + \bar{V}^{v}\left(X_{\uuptau_{r}\wedge \uptau_{R}}\right)\right]\,.
\end{align} Also, from \cref{Lyap1}, by It$\hat{\rm o}$-Krylov formula it follows that
\begin{align*}
\Exp_x^{v}\left[\Lyap\left(X_{\uuptau_{r}\wedge \uptau_{R}}\right)\right] - \Lyap(x)\,=\, \Exp_x^{v}\left[\int_0^{\uuptau_{r}\wedge \uptau_{R}} \sL_{v}\Lyap(X_t) \D{t}\right] \leq \Exp_x^{v}\left[\int_0^{\uuptau_{r}\wedge \uptau_{R}} \left(\widehat{C}_0 - h(X_t, v(X_t))\right) \D{t}\right]\,.
\end{align*}
This gives us the following (since $h(x,\zeta) > 0$)
\begin{equation*}
\Exp_x^{v}\left[\Lyap\left(X_{\uptau_{R}}\right)\Ind_{\{\uuptau_{r}\geq \uptau_{R}\}}\right]\leq \widehat{C}_0 \Exp_x^{v}\left[\uuptau_{r}\right] + \Lyap(x)\quad \text{for all} \,\,\, r <|x|<R\,.
\end{equation*} Now, it is easy to see that
\begin{equation*}
-\sup_{\partial{\sB_R}}\frac{|\hat{V}^{v}|}{\Lyap} \left(\widehat{C}_0 \Exp_x^{v}\left[\uuptau_{r}\right] + \Lyap(x)\right)\leq \Exp_x^{v}\left[\hat{V}^{v}\left(X_{\uptau_{R}}\right)\Ind_{\{\uuptau_{r}\geq \uptau_{R}\}}\right] \leq \sup_{\partial{\sB_R}}\frac{|\hat{V}^{v}|}{\Lyap} \left(\widehat{C}_0 \Exp_x^{v}\left[\uuptau_{r}\right] + \Lyap(x)\right)\,.
\end{equation*}
Since $\bar{V}^v \in \sorder(\Lyap)$, from the above estimate, we get 
\begin{equation*}
\liminf_{R\to\infty}\Exp_x^{v}\left[\hat{V}^{v}\left(X_{\uptau_{R}}\right)\Ind_{\{\uuptau_{r}\geq \uptau_{R}\}}\right] = 0\,.
\end{equation*}Thus, letting $R\to\infty$ by Fatou's lemma from \cref{ETErgoExis1B}, it follows that
\begin{align*}
\bar{V}^v(x)&\,\geq\, \Exp_x^{v}\left[\int_0^{\uuptau_{r}} \left(c(X_t, v(X_t)) - \bar{\rho}^{v}\right) \D{t} +\bar{V}^{v}\left(X_{\uuptau_{r}}\right)\right]\nonumber\\
&\,\geq\, \Exp_x^{v}\left[\int_0^{\uuptau_{r}} \left(c(X_t, v(X_t)) - \bar{\rho}^{v}\right) \D{t}\right] +\inf_{\sB_r}\bar{V}^{v}\,.
\end{align*}Since $\bar{V}^{v}(0) =0$, letting $r\to 0$, we deduce that
\begin{align}\label{ETErgoExis1C}
\bar{V}^v(x)\,\geq\, \limsup_{r\downarrow 0}\Exp_x^{v}\left[\int_0^{\uuptau_{r}} \left(c(X_t, v(X_t)) - \bar{\rho}^{v}\right) \D{t} \right]\,.
\end{align}
Since $\widehat{\rho}^v = \bar{\rho}^v$, from \cref{EErgoOpt1C} and \cref{ETErgoExis1C}, it follows that $\widehat{V}^v - \bar{V}^v \leq 0$ in $\Rd$. Also, since $(\widehat{V}^v, \widehat{\rho}^v)$ and $(\bar{V}^v, \bar{\rho}^v)$ are two solution pairs of \cref{EErgoOpt1A}, we have $\sL_{v}\left(\widehat{V}^v - \bar{V}^v\right)(x) = 0$ in $\Rd$. Hence, by strong maximum principle \cite[Theorem~9.6]{GilTru}, one has $\widehat{V}^v = \bar{V}^v$. This proves uniqueness
\end{proof}
Next we prove that the map $v\to \inf_{\Rd}\sE_x(c, v)$ is continuous on $\Usm$ under the Borkar topology\,.
\begin{theorem}\label{ergodicLyap1}
Suppose that Assumptions (A1)-(A3) and (A5) hold. Let $\{v_n\}_n$ be a sequence of policies in $\Usm$ such that $v_n \to v$ in $\Usm$\,. Then we have 
\begin{equation}\label{EErgoLyap1A}
\inf_{\Rd}\sE_x(c, v_n) \to \inf_{\Rd}\sE_x(c, v)\quad \text{as}\,\,\, n\to\infty\,.
\end{equation} 
\end{theorem}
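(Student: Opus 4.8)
The plan is to run the same compactness-and-identification scheme used in the proof of \Cref{T1.1}, now on the ergodic Poisson equations and exploiting both the control-uniform a priori estimates afforded by the Foster--Lyapunov condition (A5) and the uniqueness statement of \Cref{TErgoExis1}. By \Cref{TErgoExis1}, for every $n$ there is a unique pair $(\widehat V^{v_n},\widehat\rho^{v_n})\in(\Sobl^{2,p}(\Rd)\cap\sorder(\Lyap))\times\RR$ with $\widehat V^{v_n}(0)=0$ solving $\widehat\rho^{v_n}=\sL_{v_n}\widehat V^{v_n}(x)+c(x,v_n(x))$, and moreover $\widehat\rho^{v_n}=\sE_x(c,v_n)=\inf_{\Rd}\sE_x(c,v_n)$ (the ergodic cost under a stable policy being independent of $x$). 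Since $0\le c\le M:=\norm{c}_\infty$, we have $0\le\widehat\rho^{v_n}\le M$, so along a subsequence $\widehat\rho^{v_{n_k}}\to\bar\rho\in[0,M]$. The key observation is that the a priori estimates used in \cite[Lemma~3.7.8]{ABG-book} to produce $\widehat V^{v}$ as a subsequential limit of $\cJ^v_\alpha(\cdot,c)-\cJ^v_\alpha(0,c)$ as $\alpha\downarrow0$ depend only on $M$ and on the control-independent data $\widehat C_0,\Lyap,h$ of (A5); hence they hold uniformly over $v\in\Usm$, and in particular there is a fixed $g\in\sorder(\Lyap)$, independent of $n$, with $\abs{\widehat V^{v_n}(x)}\le g(x)$ for all $x$, $n$. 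Thus $\{\widehat V^{v_n}\}$ is bounded in $L^p(\sB_R)$ for every $R$, uniformly in $n$.

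Viewing $\sL_{v_n}\widehat V^{v_n}=\widehat\rho^{v_n}-c(\cdot,v_n(\cdot))$ as an elliptic equation whose right-hand side is bounded by $2M$, \cite[Theorem~9.11]{GilTru} gives, for $p\ge d+1$ and every $R>0$,
\[
\norm{\widehat V^{v_n}}_{\Sob^{2,p}(\sB_R)}\le\kappa(R)\bigl(\norm{\widehat V^{v_n}}_{L^p(\sB_{2R})}+2M\abs{\sB_{2R}}^{1/p}\bigr),
\]
which is bounded uniformly in $n$ by the previous paragraph. Exactly as in the proof of \Cref{T1.1} — using reflexivity of $\Sob^{2,p}$, the compact embedding $\Sob^{2,p}(\sB_R)\hookrightarrow\cC^{1,\beta}(\bar{\sB}_R)$ for $\beta<1-\nicefrac{d}{p}$, and a diagonalization over $R$ — we extract a further subsequence (not relabelled) with $\widehat V^{v_{n_k}}\to\bar V$ weakly in $\Sobl^{2,p}(\Rd)$ and strongly in $\cC^{1,\beta}_{\mathrm{loc}}(\Rd)$. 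Then $\bar V(0)=0$, $\bar V\in\Sobl^{2,p}(\Rd)$, and passing to the pointwise limit in $\abs{\widehat V^{v_{n_k}}}\le g$ yields $\abs{\bar V}\le g$, so $\bar V\in\sorder(\Lyap)$.

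Next, test the equation against $\phi\in\cC_{\mathrm{c}}^{\infty}(\Rd)$ and pass to the limit along $n_k$ precisely as in \Cref{T1.1}: $\trace(a\grad^2\widehat V^{v_{n_k}})\rightharpoonup\trace(a\grad^2\bar V)$ by weak $\Sobl^{2,p}$-convergence; writing $b(x,v_{n_k}(x))\cdot\grad\widehat V^{v_{n_k}}=b(x,v_{n_k}(x))\cdot\grad(\widehat V^{v_{n_k}}-\bar V)+b(x,v_{n_k}(x))\cdot\grad\bar V$, the first summand $\to0$ locally uniformly ($b$ locally bounded, $\grad\widehat V^{v_{n_k}}\to\grad\bar V$ in $\cC^0_{\mathrm{loc}}$) and the second converges weakly to $b(x,v(x))\cdot\grad\bar V(x)$ by \Cref{DefBorkarTopology1A}, since $\grad\bar V$ is locally Hölder; and $c(\cdot,v_{n_k}(\cdot))\rightharpoonup c(\cdot,v(\cdot))$ by \Cref{DefBorkarTopology1A}. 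Together with $\widehat\rho^{v_{n_k}}\to\bar\rho$ this yields, in the sense of distributions and hence a.e., $\bar\rho=\sL_{v}\bar V(x)+c(x,v(x))$. Thus $(\bar V,\bar\rho)\in(\Sobl^{2,p}(\Rd)\cap\sorder(\Lyap))\times\RR$ is a solution pair of the ergodic Poisson equation of \Cref{TErgoExis1} with $\bar V(0)=0$, so by the uniqueness part of \Cref{TErgoExis1} we get $(\bar V,\bar\rho)=(\widehat V^{v},\widehat\rho^{v})$, and in particular $\bar\rho=\widehat\rho^{v}=\sE_x(c,v)=\inf_{\Rd}\sE_x(c,v)$. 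Since every subsequence of the bounded real sequence $\{\inf_{\Rd}\sE_x(c,v_n)\}=\{\widehat\rho^{v_n}\}$ thus has a further subsequence converging to $\inf_{\Rd}\sE_x(c,v)$, the whole sequence converges, which is \cref{EErgoLyap1A}.

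The one genuinely new point, relative to the discounted case, is the uniformity in $n$ of the estimates in the first paragraph: one has to check that the construction of $\widehat V^{v}$ in \cite[Lemma~3.7.8]{ABG-book} supplies a bound of the form $\abs{\cJ^v_\alpha(x,c)-\cJ^v_\alpha(0,c)}\le\widehat C_1\widetilde\Lyap(x)$ with $\widetilde\Lyap\in\sorder(\Lyap)$, together with $\osc_{\sB_R}\cJ^v_\alpha(\cdot,c)\le C(R)$, holding uniformly over $v\in\Usm$ and $\alpha\in(0,1]$ — this is what makes $\{\widehat V^{v_n}\}$ precompact in $\cC^{1,\beta}_{\mathrm{loc}}(\Rd)$ and, crucially, keeps the limit $\bar V$ inside the uniqueness class $\sorder(\Lyap)$. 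Everything else is a transcription of the argument for \Cref{T1.1}.
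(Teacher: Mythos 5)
Your overall strategy is exactly the paper's: uniform local estimates on the pairs $(\widehat V^{v_n},\widehat\rho^{v_n})$, extraction of a subsequential limit $(\bar V,\bar\rho)$ weakly in $\Sobl^{2,p}$ and strongly in $\cC^{1,\beta}_{\mathrm{loc}}$, identification of the limiting Poisson equation through the Borkar topology as in \Cref{T1.1}, verification that $\bar V(0)=0$ and $\bar V\in\sorder(\Lyap)$ so that the uniqueness part of \Cref{TErgoExis1} forces $(\bar V,\bar\rho)=(\widehat V^{v},\widehat\rho^{v})$, and a subsequence-of-subsequences argument to conclude. All of that matches the paper's proof.

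The one place where your write-up has a genuine gap is precisely the step you yourself flag at the end: the uniformity in $n$. You assert that there is a single $g\in\sorder(\Lyap)$ with $\abs{\widehat V^{v_n}}\le g$ for all $n$ ``because the estimates in \cite[Lemma~3.7.8]{ABG-book} depend only on $M$ and the control-independent data of (A5)'', but for a \emph{fixed} $v$ those estimates are not stated in control-independent form: the relevant bound (\cite[Theorem~3.7.4]{ABG-book}, which the paper invokes) controls $\norm{\cJ^{v}_\alpha(\cdot,c)-\cJ^{v}_\alpha(0,c)}_{\Sob^{2,p}(\sB_R)}$ by a constant involving $\eta_{v}(\sB_{2R})^{-1}$, so uniformity over the sequence requires the additional fact that (A5) yields $\inf_{v\in\Usm}\eta_{v}(\sB_{2R})>0$ (via \cite[Lemmas~3.2.4(b) and 3.3.4]{ABG-book}); this is how the paper obtains $\sup_n\norm{\widehat V^{v_n}}_{\Sob^{2,p}(\sB_R)}<\infty$, and it is also what gives $\sup_n\sup_{\sB_r}\abs{\widehat V^{v_n}}<\infty$ through the compact embedding $\Sob^{2,p}(\sB_R)\hookrightarrow\cC^{1,\beta}(\bar\sB_R)$. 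Moreover, the paper does not dominate all $\widehat V^{v_n}$ by a fixed $g$ a priori and then pass to the limit, as you do; it first passes to the limit using only the local Sobolev bounds, and then shows $\widehat V^*\in\sorder(\Lyap)$ by combining the representation inequality $\widehat V^{v_n}(x)\le\Exp_x^{v_n}\bigl[\int_0^{\uuptau_r}(c-\widehat\rho^{v_n})\,\D t+\widehat V^{v_n}(X_{\uuptau_r})\bigr]$ with the uniform bound on $\sup_{\sB_r}\abs{\widehat V^{v_n}}$ and the control-uniform estimate of \cite[Lemma~3.7.2(i)]{ABG-book}. Your envelope $g$ does exist, but its construction needs exactly these ingredients (invariant-measure lower bound, compact embedding, Lemma~3.7.2(i)); saying ``one has to check'' leaves the crux of the proof unproved. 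Filling this in as above makes your argument coincide with the paper's.
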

\begin{proof}
From Theorem~\ref{TErgoExis1}, we know that for each $n\in \NN$ there exists unique solution pair $(\widehat{V}^{v_n}, \widehat{\rho}^{v_n})\in \Sobl^{2,p}(\Rd)\cap \sorder(\Lyap)\times \RR$ for any $p >1$ satisfying
\begin{equation}\label{EErgoLyap1B}
\widehat{\rho}^{v_n} = \sL_{v_n}\widehat{V}^{v_n}(x) + c(x, v_n(x))\quad\text{with}\quad \widehat{V}^{v_n}(0) = 0\,,
\end{equation}
where
\begin{itemize}
\item[(i)]$\widehat{\rho}^{v_n} = \sE_x(c, v_n)$
\item[(ii)] for all $x\in\Rd$, we have
\begin{equation*}
\widehat{V}^{v_n}(x) \,=\, \lim_{r\downarrow 0}\Exp_{x}^{v_n}\left[\int_{0}^{\uuptau_{r}} \left( c(X_t, v_n(X_t)) - \sE_x(c, v_n)\right)\D t\right]\,.
\end{equation*}
\end{itemize} 
In view of \cref{Lyap1}, it is easy to see that, each $v\in\Usm$ is stable and $\inf_{v\in\Usm}\eta_v(\sB_R) > 0$ for any $R>0$ (see, \cite[Lemma~3.3.4]{ABG-book} and \cite[Lemma~3.2.4(b)]{ABG-book}). Thus, from \cite[Theorem~3.7.4]{ABG-book}, it follows that
\begin{equation}\label{EErgoLyap1C}
\norm{\cJ_{\alpha}^{v_n}(\cdot, c) - \cJ_{\alpha}^{v_n}(0, c)}_{\Sob^{2,p}(\sB_R)}\leq \frac{\widehat{C}_{2}(R, p)}{\eta_{v_n}(\sB_{2R})} \left(\frac{\widehat{\rho}^{v_n}}{\eta_{v_n}(\sB_{2R})} + \sup_{\sB_{4R}\times \Act}c(x,\zeta) \right)\,,
\end{equation} where the positive constant $\widehat{C}_{2}(R, p)$ depends only on $R$ and $p$\,. Since the running cost is bounded we have $\|c\|_{\infty} \leq M$ for some positive constant $M$. Thus, we have $\widehat{\rho}^{v_n} \leq M$. Hence from \cref{EErgoLyap1C}, we deduce that
\begin{equation*}
\norm{\cJ_{\alpha}^{v_n}(\cdot, c) - \cJ_{\alpha}^{v_n}(0, c)}_{\Sob^{2,p}(\sB_R)}\leq \frac{M\widehat{C}_{2}(R, p)}{\inf_{n}\eta_{v_n}(\sB_{2R})} \left(\frac{1}{\inf_{n}\eta_{v_n}(\sB_{2R})} + 1 \right)\,.
\end{equation*} This implies that 
\begin{equation}\label{EErgoLyap1D}
\norm{\widehat{V}^{v_n}}_{\Sob^{2,p}(\sB_R)} \leq \widehat{C}_{3}(R, p)\,,
\end{equation}where $\widehat{C}_{3}(R, p)$ is a positive constant which depends only on $R$ and $p$\,. Hence, by a standard diagonalization argument and Banach Alaoglu theorem (see, \cref{ET1.1C}), one can extract a subsequence $\{\widehat{V}^{v_{n_k}}\}$ such that for some $\widehat{V}^*\in \Sobl^{2,p}(\Rd)$ we have
\begin{equation}\label{EErgoLyap1E}
\begin{cases}
\widehat{V}^{v_{n_k}}\to & \widehat{V}^*\quad \text{in}\quad \Sobl^{2,p}(\Rd)\quad\text{(weakly)}\\
\widehat{V}^{v_{n_k}}\to & \widehat{V}^*\quad \text{in}\quad \cC^{1, \beta}_{loc}(\Rd) \quad\text{(strongly)}\,.
\end{cases}       
\end{equation}Also, since $\widehat{\rho}^{v_n} \leq M$, along a further subsequence $\widehat{\rho}^{v_{n_k}}\to \widehat{\rho}^{*}$ (without loss of generality denoting by the same sequence). Now, by similar argument as in Theorem~\ref{T1.1}, multiplying by test function on the both sides of \cref{EErgoLyap1B} and letting $k\to\infty$, we deduce that $(\widehat{V}^*, \widehat{\rho}^{*})\in \Sobl^{2,p}(\Rd)\times \RR$ satisfies
\begin{equation}\label{EErgoLyap1F}
\widehat{\rho}^{*} = \sL_{v}\widehat{V}^{*}(x) + c(x, v(x))\,.
\end{equation}Since $\widehat{V}^{v_n}(0) = 0$ for each $n$, we get $\widehat{V}^*(0) = 0$
  
Next we want to show that $\widehat{V}^{*}\in \sorder{(\Lyap)}$. Following the proof of \cite[Lemma~3.7.8]{ABG-book} (see, eq.(3.7.47) or eq.(3.7.50)), it is easy to see that 
\begin{equation*}
\widehat{V}^{v_n}(x) \,\leq\, \Exp_{x}^{v_n}\left[\int_{0}^{\uuptau_{r}} \left( c(X_t, v_n(X_t)) - \sE_x(c, v_n)\right)\D t + \widehat{V}^{v_n}(X_{\uuptau_{r}})\right]\,.
\end{equation*}
This gives us the following estimate
\begin{equation}\label{EErgoLyap1G}
|\widehat{V}^{v_n}(x)| \,\leq\, M\sup_{n}\Exp_{x}^{v_n}\left[\int_{0}^{\uuptau_{r}} \left( c(X_t, v_n(X_t)) + 1\right)\D t + \sup_{\sB_r}|\widehat{V}^{v_n}|\right]\,.
\end{equation} We know that, for $d < p < \infty$, the space $\Sob^{2,p}(\sB_R)$ is compactly embedded in $\cC^{1, \beta}(\bar{\sB}_R)$\,, where $\beta < 1 - \frac{d}{p}$ (see \cite[Theorem~A.2.15 (2b)]{ABG-book}). Thus, from \cref{EErgoLyap1D}, we obtain $\displaystyle{\sup_{n}\sup_{\sB_r}|\widehat{V}^{v_n}| < \widehat{M}}$ for some positive constant $\widehat{M}$\,. Therefore, in view of \cite[Lemma~3.7.2 (i)]{ABG-book}, form \cref{EErgoLyap1G}, we deduce that $\widehat{V}^{*}\in \sorder{(\Lyap)}$\,. Since the pair $(\widehat{V}^*, \widehat{\rho}^{*})\in \Sobl^{2,p}(\Rd)\cap \sorder{(\Lyap)} \times \RR$ satisfies \cref{EErgoLyap1F}, by uniqueness of solution of \cref{EErgoLyap1F} (see, Theorem~\ref{TErgoExis1}) it follows that $(\widehat{V}^*, \widehat{\rho}^{*})\equiv (\widehat{V}^v, \widehat{\rho}^{v})$\,. This completes the proof of the theorem\,.
\end{proof}
%%%%%%%%%%%%%%%%%%%%%%%%%%%%%%%%%%%%%%%%%%%%%%%%%%%%%%%%%%%%%%%%%%%%%%%%%%%%%%%%%%%%%%%%%%%%%%%%%%%%%%%%%%%%%%%%%%%%%%%%%%%%%%%%%%%%%%%%%%%%%%%%%%%%%%%%%%%%%%%

\section{Denseness of Finite Action/Piecewise Constant Stationary Policies}\label{DensePol}
\subsection{Denseness of Policies with Finite Actions}
Let $d_{\Act}$ be the metric on the action space $\Act$\,. Since $\Act$ is compact, we have $\Act$ is totally bounded. Thus, one can find a sequence of finite grids $\{\{\zeta_{n,i}\}_{i=1}^{k_n}\}_{n\geq 1}$ such that $$\min_{i= 1,2,\dots , k_n}d(\zeta, \zeta_{n,i}) < \frac{1}{n}\quad\text{for all}\,\,\, \zeta\in \Act\,.$$

Let $\Lambda_{n} \df \{\zeta_{n,1}, \zeta_{n,2}, \dots ,\zeta_{n,k_n}\}$ and define a function $Q_n: \Act\to \Lambda_{n}$ by 
\begin{equation*}
Q_n(\zeta) = \argmin_{\zeta_{n,i}\in \Lambda_n} d(\zeta, \zeta_{n,i})\,,
\end{equation*}
where ties are broken so that $Q_n$ is measurable. The function $Q_n$ is often known as nearest neighborhood quantizer (see, \cite{SYL17}). 

For each $n$ the function $Q_n$ induces a partition $\{\Act_{n,i}\}_{i=1}^{k_n}$ of the action space $\Act$ given by
\begin{equation*}
\Act_{n,i} = \{\zeta\in \Act : Q_n(\zeta) = \zeta_{n,i}\}\,.
\end{equation*} By triangle inequality, it follows that $\text{diam}(\Act_{n,i})\df \sup_{\zeta_1, \zeta_2 \in \Act_{n,i}} d_{\Act}(\zeta_1, \zeta_2) < \frac{2}{n}$\,. Now, for each $v\in\Usm$ define a sequence of policies with finite actions as follows: 
\begin{equation}\label{DenseStra1}
v_{n}(\zeta_{n,i}|x) = Q_n v(\zeta_{n,i}|x) = v(\Act_{n,i}|x)\,.
\end{equation}
In the next lemma we prove that the space of stationary policies with finite actions are dense in $\Usm$ with respect to the \emph{Borkar topology} (see, Definition~\ref{DefBorkarTopology1A}) \,.
\begin{lemma}\label{DenseBorkarTopo}
For each $v\in\Usm$ there exists a sequence of policies $\{v_n\}_n$ (defined as in \cref{DenseStra1}) with finite actions, satisfying
\begin{equation}\label{DenseStra2}
\lim_{n\to\infty}\int_{\Rd}f(x)\int_{\Act}g(x,\zeta)v_{n}(x)(\D \zeta)\D x = \int_{\Rd}f(x)\int_{\Act}g(x,\zeta)v(x)(\D \zeta)\D x
\end{equation}
for all $f\in L^1(\Rd)\cap L^2(\Rd)$ and $g\in \cC_b(\Rd\times \Act)$
\end{lemma}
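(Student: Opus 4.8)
The plan is to reduce the assertion to a pointwise-in-$x$ convergence of the inner integrals and then invoke the dominated convergence theorem; no regularity of the drift or diffusion is needed here, only continuity of $g$ and compactness of $\Act$. Fix $f\in L^1(\Rd)\cap L^2(\Rd)$ and $g\in\cC_b(\Rd\times\Act)$ (in fact only $f\in L^1(\Rd)$ will be used). For each $x\in\Rd$ the measure $v_n(x)$ is, by \cref{DenseStra1}, the discrete probability measure on $\Lambda_n$ assigning mass $v(\Act_{n,i}\mid x)$ to $\zeta_{n,i}$, and since $\{\Act_{n,i}\}_{i=1}^{k_n}$ partitions $\Act$,
\[
\int_{\Act}g(x,\zeta)\,v_n(x)(\D\zeta)=\sum_{i=1}^{k_n}g(x,\zeta_{n,i})\,v(\Act_{n,i}\mid x)=\sum_{i=1}^{k_n}\int_{\Act_{n,i}}g(x,\zeta_{n,i})\,v(x)(\D\zeta),
\]
so that
\[
\int_{\Act}g(x,\zeta)\,v_n(x)(\D\zeta)-\int_{\Act}g(x,\zeta)\,v(x)(\D\zeta)=\sum_{i=1}^{k_n}\int_{\Act_{n,i}}\bigl(g(x,\zeta_{n,i})-g(x,\zeta)\bigr)\,v(x)(\D\zeta).
\]

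Next I would estimate the right-hand side using uniform continuity of $g$ in the action variable. For fixed $x$, the map $\zeta\mapsto g(x,\zeta)$ is continuous on the compact metric space $(\Act,d_{\Act})$, hence uniformly continuous; let $\omega_{g,x}$ denote its modulus of continuity. Since $\diam(\Act_{n,i})<2/n$ and both $\zeta$ and $\zeta_{n,i}$ lie in $\Act_{n,i}$, we have $d_{\Act}(\zeta,\zeta_{n,i})<2/n$ for every $\zeta\in\Act_{n,i}$, and therefore
\[
\Babs{\int_{\Act}g(x,\zeta)\,v_n(x)(\D\zeta)-\int_{\Act}g(x,\zeta)\,v(x)(\D\zeta)}\le\omega_{g,x}(2/n)\sum_{i=1}^{k_n}v(\Act_{n,i}\mid x)=\omega_{g,x}(2/n)\xrightarrow[n\to\infty]{}0 .
\]
Thus the inner integrals converge pointwise in $x$.

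Finally I would multiply by $f(x)$ and pass to the limit under the outer integral. The function $x\mapsto f(x)\int_{\Act}g(x,\zeta)\,v_n(x)(\D\zeta)$ is Borel measurable, because $v$ is a measurable stochastic kernel so each $x\mapsto v(\Act_{n,i}\mid x)$ is measurable and the inner integral is a finite sum of products of measurable functions; moreover it is dominated, uniformly in $n$, by $\norm{g}_\infty\,\abs{f}\in L^1(\Rd)$. The dominated convergence theorem then yields \cref{DenseStra2}. I do not expect a substantive obstacle: the only points demanding attention are (a) verifying that $x\mapsto v_n(x)$ is a bona fide element of $\Usm$, i.e.\ Borel measurable into $\mathscr{P}(\Act)$, which again follows from measurability of the kernel $v$, and (b) the fact that uniform continuity of $g$ is available only slice-by-slice in $x$ rather than on all of $\Rd\times\Act$ --- which is precisely why the argument is organized as pointwise convergence in $x$ followed by integration, rather than a single global estimate.
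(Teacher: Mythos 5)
Your proposal is correct and follows essentially the same route as the paper's proof: the same cell-by-cell decomposition of the difference of inner integrals over the quantizer partition, pointwise convergence in $x$ from continuity of $g(x,\cdot)$ together with $\diam(\Act_{n,i})<2/n$, and then dominated convergence with the dominating function $\norm{g}_\infty\abs{f}\in L^1(\Rd)$. Your additional remarks on the modulus of continuity and on measurability of $x\mapsto v_n(x)$ merely make explicit details the paper leaves implicit.
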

\begin{proof} Let $f\in L^1(\Rd)\cap L^2(\Rd)$ and $g\in \cC_b(\Rd\times \Act)$. Then from the construction of the sequence $\{v_n\}_n$, it is easy to see that
\begin{align*}
|\int_{\Rd}f(x)\int_{\Act}g(x,u)v_{n}(x)(\D u)\D x  & - \int_{\Rd}f(x)\int_{\Act}g(x,\zeta)v(x)(\D \zeta)\D x |\\
&\leq  \int_{\Rd}|f(x)|\sum_{i=1}^{k_n}\int_{\Act_{n,i}}|g(x,\zeta_{n,i}) - g(x,\zeta)|v(x)(\D \zeta)\D x\,. 
\end{align*}
Since $g\in\cC_b(\Rd\times \Act)$ and $\text{diam}(\Act_{n,i}) < \frac{2}{n}$, it follows that
\begin{equation*}
|f(x)|\sum_{i=1}^{k_n}\int_{\Act_{n,i}}|g(x,\zeta_{n,i}) - g(x,\zeta)|v(x)(\D \zeta)\rightarrow 0\quad\text{for all}\,\,\, x\in\Rd\,.
\end{equation*} As we know that $g$ is bounded, for some positive constant $M_1$ we have $|g| \leq M_1$. Thus, we deduce that
\begin{equation*}
|f(x)|\sum_{i=1}^{k_n}\int_{\Act_{n,i}}|g(x,\zeta_{n,i}) - g(x,\zeta)|v(x)(\D \zeta)\leq 2M_1 |f(x)| \quad\text{for all}\,\,\, x\in\Rd\,.
\end{equation*}Since $f\in L^1(\Rd)\cap L^2(\Rd)$, by dominated convergence theorem, we obtain
\begin{equation*}
\lim_{n\to\infty}\int_{\Rd}f(x)\int_{\Act}g(x,u)v_{n}(x)(\D u)\D x = \int_{\Rd}f(x)\int_{\Act}g(x,\zeta)v(x)(\D \zeta)\D x\,.
\end{equation*}This completes the proof of the lemma.
\end{proof}
\subsection{Denseness of Piecewise Constant Policies}
Let $d_{\mathscr{P}}$ be the Prokhorov metric on $\pV$\,. Since $(\Act, d_{\Act})$ is separable (being a compact metric space) thus convergence in $(\pV, d_{\mathscr{P}})$ is equivalent to weak convergence of probability measures.
\begin{theorem}\label{TDPCP}
For each $v\in \Usm$ there exists a sequence of piecewise constant policies $\{v_m\}_{m}$ in $\Usm$ such that
\begin{equation}\label{BorkarTopology2}
\lim_{m\to\infty}\int_{\Rd}f(x)\int_{\Act}g(x,\zeta)v_{m}(x)(\D \zeta)\D x = \int_{\Rd}f(x)\int_{\Act}g(x,\zeta)v(x)(\D \zeta)\D x
\end{equation}
for all $f\in L^1(\Rd)\cap L^2(\Rd)$ and $g\in \cC_b(\Rd\times \Act)$ 
\end{theorem}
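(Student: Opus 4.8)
The plan is to approximate a given stationary policy $v\in\Usm$ by policies that are piecewise constant in the state variable, and then show that such an approximation can be taken to converge in the Borkar topology. First I would recall that the Borkar topology on $\Usm$ is metrizable and that, by Definition~\ref{DefBorkarTopology1A}, $v_m\to v$ means \cref{BorkarTopology2} holds for all $f\in L^1(\Rd)\cap L^2(\Rd)$ and $g\in\cC_b(\Rd\times\Act)$; so it suffices to produce, for each $v$, a sequence of piecewise constant policies for which this convergence holds. The natural construction is to fix an exhausting sequence of bounded domains (say balls $\sB_m$), partition each $\sB_m$ into finitely many small measurable cells, and on each cell replace $v(\cdot)$ by a single fixed measure in $\pV$ — for instance an average of $v$ over the cell, or the value $v(x_0)$ at a representative point $x_0$ of the cell — while setting the policy equal to some fixed reference measure outside $\sB_m$. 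As the mesh of the partition shrinks and $m\to\infty$, one expects the induced measures on $\Rd\times\Act$ to converge.

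The cleanest route to the convergence \cref{BorkarTopology2} is via Lusin's theorem together with the separability/continuity structure of $\pV$. Since $(\Act,d_\Act)$ is a compact metric space, $\pV=\mathscr P(\Act)$ with the Prokhorov metric $d_{\mathscr P}$ is itself a compact metric space, and in particular separable; so a Borel map $v\colon\Rd\to\pV$ is, by Lusin's theorem, uniformly continuous off a set of arbitrarily small Lebesgue measure on each ball $\sB_m$. On the set where $v$ is (uniformly) continuous, a sufficiently fine partition into cells makes $v$ nearly constant on each cell in the $d_{\mathscr P}$ metric, hence the piecewise-constant surrogate $v_m$ satisfies $d_{\mathscr P}(v_m(x),v(x))$ small there; on the small exceptional set and outside $\sB_m$ one controls the error crudely using $\|g\|_\infty$, $\|f\|_{L^1}$, and absolute continuity of the integral of $|f|$. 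Concretely, I would split
\[
\Babs{\int_{\Rd}f(x)\!\int_{\Act}\!g(x,\zeta)\bigl(v_m(x)-v(x)\bigr)(\D\zeta)\D x}
\]
into a contribution from the "good" set (small because $g(x,\cdot)$ is uniformly continuous on the compact $\Act$ and $v_m\approx v$ there in Prokhorov distance, using that $g\in\cC_b$ implies $\zeta\mapsto g(x,\zeta)$ is a fixed bounded Lipschitz-testable family — more carefully, approximate $g$ uniformly by functions Lipschitz in $\zeta$), a contribution from the Lusin exceptional set (bounded by $2\|g\|_\infty\int_{E_m}|f|$, small by choosing the exceptional set to have small measure and invoking absolute continuity), and a contribution from $\sB_m^c$ (bounded by $2\|g\|_\infty\int_{\sB_m^c}|f|\to 0$ since $f\in L^1$).

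The main obstacle, and the step that requires genuine care rather than bookkeeping, is handling the $\zeta$-dependence of $g$ uniformly in $x$: $g\in\cC_b(\Rd\times\Act)$ gives joint continuity and a global bound, but not a modulus of continuity in $\zeta$ uniform over all of $\Rd$. The fix is to work locally: on each fixed ball $\sB_m$, $g$ restricted to $\overline{\sB_m}\times\Act$ is uniformly continuous (continuous on a compact set), so there is a modulus $\omega_m$ with $|g(x,\zeta)-g(x,\zeta')|\le\omega_m(d_\Act(\zeta,\zeta'))$ for all $x\in\sB_m$; combined with the Lusin-based near-constancy of $v$ on cells and a Prokhorov-to-test-function estimate, this yields the good-set bound. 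One then lets the mesh $\to 0$ first (for fixed $m$), then $m\to\infty$, and a standard diagonal extraction produces the single sequence $\{v_m\}$ claimed in the statement. I expect the remaining estimates — the absolute-continuity argument for $\int_{E_m}|f|$ and the tail bound $\int_{\sB_m^c}|f|$ — to be routine given $f\in L^1(\Rd)\cap L^2(\Rd)$.
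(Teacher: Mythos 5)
Your proposal is correct, but it builds the approximating policies differently from the paper. You quantize the \emph{domain}: partition each ball $\sB_m$ into small cells, freeze $v$ at a representative point (or cell average) on each cell, and control the error on the Lusin ``good'' set through a modulus of continuity of $v$ restricted to the Lusin compact and the local uniform continuity of $g$ on $\overline{\sB}_m\times\Act$ (after a Lipschitz-in-$\zeta$ approximation of $g$). The paper instead quantizes the \emph{range}: on each annulus $D_n=\sB_n\setminus\sB_{n-1}$ it first upgrades the Lusin restriction to a globally continuous map $\tilde v_n^{\epsilon_n}\colon D_n\to\pV$ via Tietze's extension theorem, then applies a nearest-neighbour quantizer of the compact space $(\pV,d_{\mathscr P})$ with a finite set $\widehat\Lambda_m$ of measures, the pieces of the policy being the preimages $(\tilde v_n^{\epsilon_n})^{-1}(\widehat U_{m,i})$; this yields policies taking values in one fixed finite subset of $\pV$, which dovetails with the finite-action denseness result, whereas your construction is more geometric and dispenses with Tietze. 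The second difference is how arbitrary $g\in\cC_b(\Rd\times\Act)$ and the quantifier ``one sequence for all $(f,g)$'' are handled: the paper does this explicitly in its Step~3 by Stone--Weierstrass approximation of $g$ by sums $\sum_i r_i(x)p_i(\zeta)$ on $\overline\sB_{N_1}\times\Act$ together with countable dense families in $L^1(\Rd)$ and $\cC(\Act)$, while you appeal to local moduli of continuity plus ``a standard diagonal extraction.'' That last step is where you should be more explicit: a diagonal argument requires first reducing to a countable determining family of test pairs (equivalently, invoking the metrizability of the Borkar topology on $\Usm$, which the paper cites from \cite[Section~2.4]{ABG-book}); since your construction of $v_m$ does not depend on $(f,g)$ and your error bounds are stable under $L^1$-approximation of $f$ and uniform approximation of $g$ on compacts, this reduction goes through, but it is the same bookkeeping the paper carries out in Step~3 and should not be waved away. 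With that made precise, your argument is a complete and somewhat more elementary proof of the same statement.
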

\begin{proof}
Let $\sB_{0} = \emptyset$ and define $D_{n} = \sB_{n}\setminus \sB_{n-1}$ for $n\in \NN$\,. Thus it is easy to see that $\Rd = \cup_{n=1}^{\infty} D_{n}$. Since  each $v\in \Usm$ is a measurable map $v: \Rd \to \pV$, it follows that $\hat{v}_{n}\,\df\, v\arrowvert_{D_n} : D_n \to \pV$ is a measurable map. Hence, by Lusin's theorem (see \cite[Theorem~7.5.2]{D02-book}), for any $\epsilon_n > 0$ there exists a compact set $K_{n}^{\epsilon_n}\subset D_n$ and a continuous function $\hat{v}_{n}^{\epsilon_n} : K_{n}^{\epsilon_n}\to \pV$ such that (the Lebesgue measure of the set $D_n\setminus K_{n}^{\epsilon_n}$) $\arrowvert(D_n\setminus K_{n}^{\epsilon_n}) \arrowvert < \epsilon_n$ and $\hat{v}_{n} \equiv \hat{v}_{n}^{\epsilon_n}$ on $K_{n}^{\epsilon_n}$\,. Again, Tietze's extension theorem (see \cite[Theorem~4.1]{DG51}) there exists a continuous function $\tilde{v}_{n}^{\epsilon_n}: D_n \to \pV$ such that $ \tilde{v}_{n}^{\epsilon_n}\equiv \hat{v}_{n}^{\epsilon_n}$ on $K_{n}^{\epsilon_n}$\,. 
\begin{itemize}
\item[\textbf{Step1}]
Therefore for any $\hat{f}\in \Lp^1(\Rd)\cap \Lp^2(\Rd)$ and $\hat{g}\in \cC(\Act)$, we have
\begin{align}\label{EBT1}
&\arrowvert \int_{D_n}\hat{f}(x) \int_{\Act} \hat{g}(\zeta)\hat{v}_{n}(x)(\D \zeta)\D x - \int_{D_n}\hat{f}(x)\int_{\Act} \hat{g}(\zeta)\tilde{v}_{n}^{\epsilon_n}(x)(\D \zeta)\D x \arrowvert \nonumber\\
&\leq \arrowvert \int_{D_n\setminus K_{n}^{\epsilon_n}} \hat{f}(x) \int_{\Act} \hat{g}(\zeta)\hat{v}_{n}(x)(\D \zeta)\D x - \int_{D_n\setminus K_{n}^{\epsilon_n}} \hat{f}(x) \int_{\Act} \hat{g}(\zeta)\tilde{v}_{n}^{\epsilon_n}(x)(\D \zeta)\D x \arrowvert \nonumber\\
&\leq \arrowvert \int_{D_n\setminus K_{n}^{\epsilon_n}} \hat{f}(x) \int_{\Act} \hat{g}(\zeta)\hat{v}_{n}(x)(\D \zeta)\D x \arrowvert + \arrowvert\int_{D_n\setminus K_{n}^{\epsilon}}\hat{f}(x)\int_{\Act} \hat{g}(\zeta)\tilde{v}_{n}^{\epsilon_n}(x)(\D \zeta)\D x \arrowvert \nonumber\\
&\leq \|\hat{g}\|_{\infty}\int_{D_n\setminus K_{n}^{\epsilon_n}}|\hat{f}(x)|\D x  + \|\hat{g}\|_{\infty}\int_{D_n\setminus K_{n}^{\epsilon_n}}|\hat{f}(x)|\D x \nonumber\\
&\leq 2\|\hat{g}\|_{\infty}\|\hat{f}\|_{\Lp^2(\Rd)} \sqrt{|(D_n\setminus K_{n}^{\epsilon_n})|} \leq 2\sqrt{\epsilon_n} \|\hat{g}\|_{\infty}\|\hat{f}\|_{\Lp^2(\Rd)}\quad\text{(by H\"older's inequality)}\,.
\end{align}
Now, since $(\pV, d_{\mathscr{P}})$ is compact, for each $m\in \NN$ there exists a finite set $\widehat{\Lambda}_{m} = \{\mu_{m,1}, \mu_{m,2}, \dots , \mu_{m, k_m}\}$ such that $$\inf_{\mu_{m, i}\in \widehat{\Lambda}_{m}}d_{\mathscr{P}}(\mu, \mu_{m, i}) < \frac{1}{m}\quad \text{for any}\quad \mu\in \pV\,.$$ Let $\widehat{Q}_{m}: \pV \to \widehat{\Lambda}_{m}$ be defined as 
\begin{equation*}
\widehat{Q}_{m} (\mu) = \argmin_{\mu_{m,i}\in\widehat{\Lambda}_{m}} d_{\mathscr{P}}(\mu, \mu_{m,i})\,.
\end{equation*} Ties are broken so that $\widehat{Q}_{m}$ is a measurable map. Hence, it induces a partition $\{\widehat{U}_{m,i}\}_{i=1}^{k_m}$ of the space $\pV$ which is given by
\begin{equation*}
\widehat{U}_{m,i} = \{\mu\in\pV: \widehat{Q}_{m}(\mu) = \mu_{m,i}\}\,.
\end{equation*}By triangle inequality it is easy to see that 
\begin{equation*}
\diam(\widehat{U}_{m,i}) \df \sup_{\mu_1, \mu_2}d_{\mathscr{P}}(\mu_1, \mu_2) < \frac{2}{m}\,. 
\end{equation*} Now, for $v\in\Usm$ define $D_{n,i}^m = (\tilde{v}_{n}^{\epsilon_n})^{-1}(\widehat{U}_{m,i})$. This implies that $D_{n} = \cup_{i=1}^{k_m} D_{n,i}^m$\,. Define
\begin{equation*}
\hat{v}_{n,m}^{\epsilon_n}(x) \df \sum_{i=1}^{k_m} \mu_{m,i}\Ind_{\{D_{n,i}^m\}}(x)\quad\text{for all}\quad x\in D_n\,\,\,\text{and}\,\,\,m\in \NN\,.
\end{equation*} Therefore, we deduce that
\begin{align}\label{EBT2}
&\arrowvert \int_{D_n} \hat{f}(x)\int_{\Act} \hat{g}(\zeta)\tilde{v}_{n}^{\epsilon_n}(x)(\D \zeta)\D x - \int_{D_n} \hat{f}(x) \int_{\Act} \hat{g}(\zeta)\hat{v}_{n,m}^{\epsilon_n}(x)(\D \zeta)\D x \arrowvert \nonumber\\
&\leq \sum_{i=1}^{k_m} \arrowvert \int_{D_{n,i}^{m}} \hat{f}(x) \int_{\Act} \hat{g}(\zeta)\tilde{v}_{n}^{\epsilon_n}(x)(\D \zeta)\D x - \int_{D_{n,i}^{m}} \hat{f}(x) \int_{\Act} \hat{g}(\zeta)\mu_{m,i}(\D \zeta)\D x \arrowvert \nonumber\\
&\leq \sum_{i=1}^{k_m} \int_{D_{n,i}^{m}} |\hat{f}(x)| \arrowvert \int_{\Act} \hat{g}(\zeta)\tilde{v}_{n}^{\epsilon_n}(x)(\D \zeta) - \int_{\Act} \hat{g}(\zeta)\mu_{m,i}(\D \zeta)\arrowvert \D x  \nonumber\\
&\leq \|\hat{f}\|_{\Lp^1(\Rd)}\epsilon_n \quad\text{(for large enough $m$)}\,.
\end{align} If we choose $\epsilon_n = \min\{\frac{\epsilon_n\|\hat{f}\|_{\Lp^2(\Rd)}\|\hat{g}\|_{\infty}}{4}, \frac{\epsilon_n\|\hat{f}\|_{\Lp^1(\Rd)}}{2}\}$, combining \cref{EBT1}, \cref{EBT2}, there exists $\bar{M}_0 >0$ (depending on $\hat{f}, \hat{g}$ and $\epsilon_n$) such that
\begin{align}\label{EBT3}
\arrowvert \int_{D_n}\hat{f}(x) \int_{\Act} \hat{g}(\zeta)\hat{v}_{n}(x)(\D \zeta)\D x - \int_{D_n}\hat{f}(x)\int_{\Act} \hat{g}(\zeta)\hat{v}_{n,m}^{\epsilon_n}(x)(\D \zeta)\D x \arrowvert \leq \epsilon_n\,,
\end{align} for all $m\geq \bar{M}_0$\,.
\item[\textbf{Step2}]
Let $\epsilon > 0$ be a small number. Now define 
\begin{equation}\label{EBT4}
\bar{v}_{m}^{\epsilon} \df \sum_{n=1}^{\infty} \hat{v}_{n,m}^{\epsilon_n} \quad \text{for}\,\,\, m\in \NN\,. 
\end{equation} Since $\hat{f}\in\Lp^1(\Rd)$ there exists $N_0 \in \NN$ such that $\int_{\sB_{N_0}^c}|\hat{f}(x)|\D x < \frac{\epsilon}{4\|\hat{g}\|_{\infty}}$
\begin{align*}
&\arrowvert \int_{\Rd}\hat{f}(x)\int_{\Act} \hat{g}(\zeta)v(x)(\D \zeta)\D x - \int_{\Rd}\hat{f}(x)\int_{\Act} \hat{g}(\zeta)\bar{v}_{m}^{\epsilon}(x)(\D \zeta)\D x \arrowvert \nonumber\\
&\leq \arrowvert \int_{\sB_{N_0}^c} \hat{f}(x)\int_{\Act} \hat{g}(\zeta)(v(x) - \bar{v}_{m}^{\epsilon}(x))(\D \zeta)\D x \arrowvert + \arrowvert \int_{\sB_{N_0}} \hat{f}(x)\int_{\Act} \hat{g}(\zeta)(v(x) - \bar{v}_{m}^{\epsilon}(x))(\D \zeta)\D x \arrowvert \nonumber\\
&\leq \frac{\epsilon}{2} + \arrowvert \int_{\sB_{N_0}} \hat{f}(x)\int_{\Act} \hat{g}(\zeta)(v(x) - \bar{v}_{m}^{\epsilon}(x))(\D \zeta)\D x \arrowvert
\end{align*}
Now, choose $\epsilon_{i} > 0$ for $i= 1,\dots , N_0$ such that $\sum_{i = 1}^{N_0} \epsilon_i < \frac{\epsilon}{2}$. Thus,  in view of \cref{EBT3} there exists $M_i >0$ such that for each $i = 1, \dots , N_0$
\begin{equation*}
\arrowvert \int_{D_i}\hat{f}(x) \int_{\Act} \hat{g}(\zeta)\hat{v}_{i}(x)(\D \zeta)\D x - \int_{D_i}\hat{f}(x)\int_{\Act} \hat{g}(\zeta)\hat{v}_{i,m}^{\epsilon_i}(x)(\D \zeta)\D x \arrowvert \leq \epsilon_i\,,
\end{equation*} for all $m\geq M_i$\,. Hence, for $m\geq \max\{M_i,\, i = 1,\dots ,N_0\}$, we get    
\begin{align}\label{EBT5}
\arrowvert \int_{\sB_{N_0}} \hat{f}(x)\int_{\Act} \hat{g}(\zeta)(v(x) & - \bar{v}_{m}^{\epsilon}(x))(\D \zeta)\D x \arrowvert \nonumber \\
& \leq \sum_{i=1}^{N_0} |\int_{D_{i}}\hat{f}(x)\int_{\Act} \hat{g}(\zeta)(\hat{v}_{i}(x) - \hat{v}_{i,m}^{\epsilon_i}(x))(\D \zeta)\D x \arrowvert \leq \sum_{i = 1}^{N_0} \epsilon_i < \frac{\epsilon}{2}\,.
\end{align}
Therefore, for each $\epsilon >0$ we deduce that there exists a positive constant $\hat{M}_0$ (= $\max\{M_i,\, i = 1,\dots ,N_0\}$) such that for $m\geq \hat{M}_0$ (where $\hat{M}_0$ depends on $\hat{f}, \hat{g}, \epsilon$)
\begin{equation}\label{EBT6}
\arrowvert \int_{\Rd}\hat{f}(x)\int_{\Act} \hat{g}(\zeta)v(x)(\D \zeta)\D x - \int_{\Rd}\hat{f}(x)\int_{\Act} \hat{g}(\zeta)\bar{v}_{m}^{\epsilon}(x)(\D \zeta)\D x \arrowvert \leq \epsilon\,.
\end{equation}

\item[\textbf{Step3}]
Let $\{\hat{f}_k\}_{k\in\NN}$ and $\{h_{j}\}_{j\in\NN}$ be countable dense set in $\Lp^1(\Rd)$ and $\cC(\Act)$ respectively\,. Thus \cref{EBT6} holds true for each $\hat{f}_k$ and $h_j$\,.
 
Let $f\in\Lp^{1}(\Rd)\cap \Lp^{2}(\Rd)$ and $g\in\cC_{b}(\Rd\times \Act)$\,. Since $f\in \Lp^{1}(\Rd)$ for $\epsilon > 0$ there exists $N_{1}\in \NN$ such that $\int_{\sB_{N_1}^c} |f(x)|\D x \leq \frac{\epsilon}{4\|g\|_{\infty}}$\,. This implies
\begin{align}\label{EBT7}
&\arrowvert \int_{\Rd}f(x)\int_{\Act} g(x,\zeta)v(x)(\D \zeta)\D x - \int_{\Rd}f(x)\int_{\Act} g(x,\zeta)\bar{v}_{m}^{\epsilon}(x)(\D \zeta)\D x \arrowvert \nonumber\\
&\leq \arrowvert \int_{\sB_{N_1}^c} f(x)\int_{\Act} g(x,\zeta)(v(x) - \bar{v}_{m}^{\epsilon}(x))(\D \zeta)\D x \arrowvert + \arrowvert \int_{\sB_{N_1}} f(x)\int_{\Act} g(x,\zeta)(v(x) - \bar{v}_{m}^{\epsilon}(x))(\D \zeta)\D x \arrowvert \nonumber\\
&\leq \frac{\epsilon}{2} + \arrowvert \int_{\sB_{N_1}} f(x)\int_{\Act} g(x, \zeta)(v(x) - \bar{v}_{m}^{\epsilon}(x))(\D \zeta)\D x \arrowvert\,.
\end{align}
It is well known that in $\cC_b(\bar{\sB}_{N_1}\times \Act)$ the functions of the form   $\{\sum_{i}^{m} r_{i}(x)p_i(\zeta)\}_{m\in\NN}$ forms an algebra which contains constants, where $r_i\in \cC(\bar{\sB}_{N_1})$ and $p_i \in \cC(\Act)$\,. Thus by Stone-Weierstrass theorem there exists $\hat{m}$ (large enough) such that 
\begin{equation}\label{EBT8}
\sup_{\sB_{N_1}\times \Act} |g(x,\zeta) - \sum_{i}^{\hat{m}} r_{i}(x)p_i(\zeta)| \leq \frac{\epsilon}{24\|f\|_{\Lp^1(\Rd)}}\,.
\end{equation}
Since $p_i \in \cC(\Act)$ we can find $h_{j(i)} \in \cC(\Act)$ such that
\begin{equation}\label{EBT9}
\sup_{\zeta\in \Act} |p_i(\zeta) - h_{j(i)}(\zeta)| \leq \frac{\epsilon}{24\|f\|_{\Lp^1(\Rd)}\|r_{i}\|_{\infty}}\,.
\end{equation}
Also, since $fr_i\in \Lp^1(\Rd)$ there exists $\hat{f}_{k(i)}$ such that
\begin{equation}\label{EBT9A}
\int_{\sB_{N_1}} |f(x)r_i(x) - \hat{f}_{k(i)}(x)|\D x \leq \frac{\epsilon}{24\|f\|_{\Lp^1(\Rd)}\|h_{i}\|_{\infty}}\,.
\end{equation} Now, using \cref{EBT8}, \cref{EBT9}, \cref{EBT9A} we have the following
\begin{align}\label{EBT10}
&\arrowvert \int_{\sB_{N_1}} f(x)\int_{\Act} g(x, \zeta)v(x) - \int_{\sB_{N_1}} f(x)\int_{\Act} g(x, \zeta)\bar{v}_{m}^{\epsilon}(x))(\D \zeta)\D x \arrowvert\nonumber\\
\leq & \arrowvert \int_{\sB_{N_1}} f(x)\int_{\Act} g(x, \zeta)v(x) (\D \zeta)\D x - \int_{\sB_{N_1}} f(x)\int_{\Act} \sum_{i}^{\hat{m}} r_{i}(x)p_i(\zeta) v(x)(\D \zeta)\D x \arrowvert\nonumber\\
& + \sum_{i=1}^{\hat{m}}\arrowvert \int_{\sB_{N_1}} f(x)\int_{\Act} r_{i}(x)p_i(\zeta) v(x)(\D \zeta)\D x - \int_{\sB_{N_1}} f(x)\int_{\Act} r_{i}(x)h_{j(i)}(\zeta)v(x) (\D \zeta)\D x\arrowvert\nonumber\\
& + \sum_{i=1}^{\hat{m}}\arrowvert \int_{\sB_{N_1}} f(x)\int_{\Act} r_{i}(x)h_{j(i)}(\zeta)v(x) (\D \zeta)\D x - \int_{\sB_{N_1}} \hat{f}_{k(i)}(x)\int_{\Act} h_{j(i)}(\zeta)v(x) (\D \zeta)\D x\arrowvert\nonumber\\
& + \sum_{i=1}^{\hat{m}}\arrowvert  \int_{\sB_{N_1}} \hat{f}_{k(i)}(x)\int_{\Act} h_{j(i)}(\zeta)v(x) (\D \zeta)\D x - \int_{\sB_{N_1}} \hat{f}_{k(i)}(x)\int_{\Act} h_{j(i)}(\zeta)\bar{v}_{m}^{\epsilon}(x) (\D \zeta)\D x\arrowvert \nonumber \\
& + \sum_{i=1}^{\hat{m}}\arrowvert \int_{\sB_{N_1}} f(x)\int_{\Act} r_{i}(x)h_{j(i)}(\zeta)\bar{v}_{m}^{\epsilon}(x) (\D \zeta)\D x - \int_{\sB_{N_1}} \hat{f}_{k(i)}(x)\int_{\Act} h_{j(i)}(\zeta)\bar{v}_{m}^{\epsilon}(x) (\D \zeta)\D x\arrowvert\nonumber\\
& + \sum_{i=1}^{\hat{m}}\arrowvert \int_{\sB_{N_1}} f(x)\int_{\Act} r_{i}(x)p_i(\zeta) \bar{v}_{m}^{\epsilon}(x)(\D \zeta)\D x - \int_{\sB_{N_1}} f(x)\int_{\Act} r_{i}(x)h_{j(i)}(\zeta)\bar{v}_{m}^{\epsilon}(x) (\D \zeta)\D x\arrowvert\nonumber\\
& + \arrowvert \int_{\sB_{N_1}} f(x)\int_{\Act} g(x, \zeta)\bar{v}_{m}^{\epsilon}(x) (\D \zeta)\D x - \int_{\sB_{N_1}} f(x)\int_{\Act} \sum_{i}^{\hat{m}} r_{i}(x)p_i(\zeta) \bar{v}_{m}^{\epsilon}(x)(\D \zeta)\D x \arrowvert\nonumber\\
& \leq \frac{\epsilon}{4} + \sum_{l=1}^{N_1}\sum_{i=1}^{\hat{m}}\arrowvert  \int_{D_{l}} \hat{f}_{k(i)}(x)\int_{\Act} h_{j(i)}(\zeta)v(x) (\D \zeta)\D x - \int_{D_{l}} \hat{f}_{k(i)}(x)\int_{\Act} h_{j(i)}(\zeta)\bar{v}_{l,m}^{\epsilon_l}(x) (\D \zeta)\D x\arrowvert
\end{align} Now, choose $\epsilon_{l,i}$ for $l = 1,\dots , N_1$ and $i=1,\dots ,\hat{m}$ in such a way that $\sum_{l=1}^{N_1}\sum_{i=1}^{\hat{m}}\epsilon_{l,i} \leq \frac{\epsilon}{4}$\,. Thus, in view of \cref{EBT3} there exists $\hat{M}_{2} := \max\{M_{k(i),j(i)}^{l}: i=1,\dots , \hat{m}; l = 1,\dots , N_1\}$ (where $M_{k(i),j(i)}^{l}\in \NN$ is the constant obtained as in \cref{EBT3} for $i=1,\dots , \hat{m}; l = 1,\dots , N_1$). Therefore, from \cref{EBT7} and \cref{EBT10}, we conclude that
\begin{align}\label{EBT11}
\arrowvert \int_{\Rd}f(x)\int_{\Act} g(x,\zeta)v(x)(\D \zeta)\D x - \int_{\Rd}f(x)\int_{\Act} g(x,\zeta)\bar{v}_{m}^{\epsilon}(x)(\D \zeta)\D x \arrowvert 
\leq \epsilon\,,
\end{align} for all $m\geq \hat{M}_2$\,. This completes the proof of the theorem\,.
\end{itemize}
\end{proof}

\subsection{Denseness of Continuous Policies}
Following the discussions above, one can show that the space of continuous stationary policies are also dense in the space of stationary policies under Borkar topology. This is a useful result as continuity allows for many approximation results to be invoked with little effort (see e.g.  \cite[Assumption A2.3, pp. 322]{KD92} where convergence properties of invariant measures corresponding to time-discretizations are facilitated). 
\begin{theorem}\label{TDContP}
For each $v\in \Usm$ there exists a sequence of continuous policies $\{v_m\}_{m}$ in $\Usm$ such that
\begin{equation}\label{BorkarTopology2Cont}
\lim_{m\to\infty}\int_{\Rd}f(x)\int_{\Act}g(x,\zeta)v_{m}(x)(\D \zeta)\D x = \int_{\Rd}f(x)\int_{\Act}g(x,\zeta)v(x)(\D \zeta)\D x
\end{equation}
for all $f\in L^1(\Rd)\cap L^2(\Rd)$ and $g\in \cC_b(\Rd\times \Act)$ 
\end{theorem}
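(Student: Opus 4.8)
The plan is to reuse the skeleton of the proof of Theorem~\ref{TDPCP}, but to replace the finite‑quantization step by a genuine continuous extension, and to exploit the convexity of $\pV$ in order to patch the locally defined continuous selections into one map that is continuous on all of $\Rd$.

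Fix $v\in\Usm$ and $\epsilon>0$. First I would choose a locally finite, overlapping open cover $\{O_n\}_{n\ge1}$ of $\Rd$ by enlarged annuli (for instance $O_1=\sB_2$ and $O_n=\sB_{n+1}\setminus\bar{\sB}_{n-2}$ for $n\ge2$), together with a subordinate partition of unity $\{\chi_n\}_n$ of continuous functions satisfying $\sum_n\chi_n\equiv1$, with each $\chi_n$ supported in $O_n$. Since each $O_n$ is bounded, Lusin's theorem \cite[Theorem~7.5.2]{D02-book} applied to the Borel map $v|_{O_n}\colon O_n\to\pV$ yields a compact $K_n\subset O_n$ with $\abs{O_n\setminus K_n}<\epsilon\,2^{-n}$ on which $v$ coincides with a continuous map. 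By Tietze's extension theorem \cite[Theorem~4.1]{DG51} --- applicable because $\pV$ is a convex subset of the locally convex space of finite signed measures on $\Act$, and the extension produced there takes values in $\conv(\pV)=\pV$ --- there is a continuous $\tilde v_n\colon\Rd\to\pV$ with $\tilde v_n\equiv v$ on $K_n$.

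Next I would set $v^{\epsilon}\df\sum_n\chi_n\,\tilde v_n$. Since each $\tilde v_n(x)\in\pV$ and $\pV$ is convex, $v^{\epsilon}(x)\in\pV$ for every $x$; the sum is locally finite, so $v^{\epsilon}\in\Usm$ and is continuous. If $x\notin E\df\bigcup_n(O_n\setminus K_n)$, then every $n$ with $\chi_n(x)>0$ satisfies $x\in K_n$, hence $\tilde v_n(x)=v(x)$, and therefore $v^{\epsilon}(x)=\bigl(\sum_n\chi_n(x)\bigr)v(x)=v(x)$; moreover $\abs{E}\le\sum_n\epsilon\,2^{-n}=\epsilon$. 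Consequently, for $f\in L^1(\Rd)\cap L^2(\Rd)$ and $g\in\cC_b(\Rd\times\Act)$,
\begin{equation*}
\Babs{\int_{\Rd}f(x)\int_{\Act}g(x,\zeta)\bigl(v^{\epsilon}(x)-v(x)\bigr)(\D\zeta)\,\D x}\;\le\;2\norm{g}_{\infty}\int_{E}\abs{f(x)}\,\D x,
\end{equation*}
and the right-hand side tends to $0$ as $\epsilon\downarrow0$ by absolute continuity of the integral of $f\in L^1(\Rd)$. Taking $\epsilon=\epsilon_m\downarrow0$ and $v_m\df v^{\epsilon_m}$ then yields \cref{BorkarTopology2Cont}.

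I expect the only real obstacle to be this patching step. A literal imitation of the proof of Theorem~\ref{TDPCP}, which decomposes $\Rd$ into the disjoint shells $D_n$, produces selections continuous on each shell but in general discontinuous across the spheres $\partial\sB_n$; passing to an overlapping open cover with a subordinate partition of unity, and using that convex combinations of probability measures are again probability measures, is precisely what repairs this without altering any of the estimates. Everything else is of the same nature as Steps~1--3 in the proof of Theorem~\ref{TDPCP}. (Alternatively, invoking Lusin's theorem directly on $\Rd$ with Lebesgue measure to obtain a single closed set $F$ with $\abs{\Rd\setminus F}$ arbitrarily small and $v|_F$ continuous, one application of \cite[Theorem~4.1]{DG51} produces the desired continuous policy and the partition of unity becomes unnecessary.)
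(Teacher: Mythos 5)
Your proof is correct, but it takes a genuinely different route from the paper's. The paper never uses Lebesgue measure globally and therefore has no patching problem: it fixes a countable dense family $\{f_i\}\subset L^1(\Rd)$, introduces the finite measures $\nu_i(A)=\int_A|f_i(x)|\,\D x$, applies Lusin's theorem with respect to $\nu_i$ on all of $\Rd$ (legitimate precisely because $\nu_i$ is finite) and then one Dugundji--Tietze extension, obtaining for each $i$ a single continuous policy $v^i$ with $v^i\equiv v$ off a set of small $\nu_i$-measure; the claim for general $f\in L^1(\Rd)\cap L^2(\Rd)$ then follows from $L^1$-density of $\{f_i\}$, the error being controlled by $2\norm{g}_\infty\bigl(\epsilon_i+\norm{f-f_i}_{L^1}\bigr)$. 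You instead localize to bounded sets where Lebesgue measure is finite and repair the discontinuity-across-shells issue (which you correctly identify in the construction of Theorem~\ref{TDPCP}) by an overlapping cover, a continuous partition of unity, and the convexity of $\pV$; this yields one continuous policy agreeing with $v$ outside a set of arbitrarily small Lebesgue measure, and convergence for every fixed $f$ and $g$ follows from absolute continuity of $A\mapsto\int_A|f|$, with no dense family or diagonal choice needed. Your version buys a test-function-free approximation statement and an explicit final sequence; the paper's use of the measures $\nu_i$ buys a shorter argument (one global Lusin set, no partition of unity) but leaves the concluding ``choose $\epsilon_i$ appropriately'' step implicit. One small caveat: your parenthetical shortcut (a single closed $F\subset\Rd$ with $|\Rd\setminus F|$ small and $v|_F$ continuous) is not an immediate consequence of the finite-measure form of Lusin's theorem, since Lebesgue measure on $\Rd$ is infinite; it is true, but requires either a separated-annuli construction or a weighted finite measure --- which is exactly the role the $\nu_i$ play in the paper. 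Your main argument does not rely on this aside, so nothing is lost.
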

\begin{proof}
As earlier we have $\{f_i\}_{i\in\NN}$ is a countable dense set in $\Lp^{1}(\Rd)$\,. Now for each $i\in\NN$, define a finite measure
$\nu_i$ on $(\Rd, \sB(\Rd))$, given by
$$\nu_i(A) = \int_{A} |f_i(x)|\D x \quad \forall \,\,\, A\in \sB(\Rd)\,.$$
Let $v\in\Usm$. Then, as in the proof of Theorem~\ref{TDPCP}, by successive application of Lusin's theorem (see \cite[Theorem~7.5.2]{D02-book}) and Tietze's extension theorem (see \cite[Theorem~4.1]{DG51}), for any $\epsilon_i >0$ there exists a closed set $K_i\in \Rd$ and a continuous function $v^{i}: \Rd \to \pV$ such that $v^{i}\equiv v$ on $K_i$ and $\nu_i(\Rd\setminus K_i) < \epsilon_{i}$\,. Hence, for any $g\in\cC_b(\Rd\times \Act)$, we have 
\begin{align*}\label{EBT1Remark}
&\arrowvert \int_{\Rd}f_i(x) \int_{\Act} g(x,\zeta)v(x)(\D \zeta)\D x - \int_{\Rd}f_i(x)\int_{\Act} g(x,\zeta)v^i(x)(\D \zeta)\D x \arrowvert \nonumber\\
&\leq \arrowvert \int_{\Rd\setminus K_i} f_i(x) \int_{\Act}g(x,\zeta)v(x)(\D \zeta)\D x - \int_{\Rd\setminus K_i} f_i(x) \int_{\Act} g(x,\zeta)v^i(x)(\D \zeta)\D x \arrowvert \nonumber\\
&\leq 2\|g\|_{\infty}\int_{\Rd\setminus K_i}|f_i(x)|\D x \nonumber\\
& = 2\|g\|_{\infty}\nu_i(\Rd\setminus K_i) \leq 2\|g\|_{\infty}\epsilon_i\,.
\end{align*} Since $\{f_i\}_{i\in\NN}$ is dense in $\Lp^{1}(\Rd)$, by choosing $\epsilon_i$ appropriately, we obtain our result\,. 
\end{proof} 
%%%%%%%%%%%%%%%%%%%%%%%%%%%%%%%%%%%%%%%%%%%%%%%%%%%%%%%%%%%%%%%%%%%%%%%%%%%%%%%%%%%%%%%%%%%%%%%%%%%%%%%%%%%%%%%%%%%%%%%%%%%%%%%%%%%%%%%%%%%%%%%%%%%%%%%%%%%%%%
\section{Near Optimality of Finite Models for Controlled Diffusions}\label{NOptiFinite}
First we prove the near optimality of quantized policies for the $\alpha$-discounted cost. 
\begin{theorem}\label{T1.2}
Suppose Assumptions (A1)-(A3) hold. Then for each $\epsilon >0$ there exists a policy $v_{\epsilon}^*\in \Usm$ with finite actions and piecewise constant policies $\bar{v}_{\epsilon}^* \in \Usm$ such that 
\begin{equation}\label{ET1.2A}
\cJ_{\alpha}^{v_{\epsilon}^*}(x, c) \leq \inf_{U\in \Uadm}\cJ_{\alpha}^{U}(x, c) + \epsilon \quad\text{and}\quad \cJ_{\alpha}^{\bar{v}_{\epsilon}^*}(x, c) \leq \inf_{U\in \Uadm}\cJ_{\alpha}^{U}(x, c) + \epsilon \quad\quad\text{for all}\,\, x\in\Rd\,.
\end{equation}
\end{theorem}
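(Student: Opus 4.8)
The plan is to combine the continuity result of Theorem~\ref{T1.1} with the two denseness results (Lemma~\ref{DenseBorkarTopo} for finite-action policies and Theorem~\ref{TDPCP} for piecewise constant policies), together with the classical fact that for the discounted cost with non-degenerate diffusions an optimal stationary Markov policy exists. First I would recall that under Assumptions~(A1)--(A3) the $\alpha$-discounted control problem admits an optimal policy in $\Usm$: the value function $V_\alpha\in\Sobl^{2,p}(\Rd)\cap\cC_b(\Rd)$ solves the HJB equation $\trace(a\grad^2 V_\alpha) + \min_{\zeta\in\Act}\bigl[b(x,\zeta)\cdot\grad V_\alpha + c(x,\zeta)\bigr] = \alpha V_\alpha$, and any measurable selector $v^*(x)\in\Argmin_{\zeta\in\Act}\bigl[b(x,\zeta)\cdot\grad V_\alpha(x) + c(x,\zeta)\bigr]$ belongs to $\Usm$ and satisfies $\cJ_\alpha^{v^*}(x,c) = V_\alpha(x) = \inf_{U\in\Uadm}\cJ_\alpha^{U}(x,c)$ for all $x$ (see \cite[Theorem~3.5.6 and Theorem~A.3.7]{ABG-book}); in particular the infimum over all admissible controls is attained within the stationary Markov class, so it suffices to approximate $v^*$.

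Next, fix $\epsilon>0$. By Lemma~\ref{DenseBorkarTopo} there is a sequence $\{v_n\}_n\subset\Usm$ of finite-action policies with $v_n\to v^*$ in the Borkar topology of $\Usm$; by Theorem~\ref{T1.1} the map $v\mapsto\cJ_\alpha^{v}(x,c)$ is continuous on $\Usm$, hence $\cJ_\alpha^{v_n}(x,c)\to\cJ_\alpha^{v^*}(x,c)$. (Here one should be slightly careful that continuity as stated is pointwise in $x$; since we only need the inequality for each fixed $x\in\Rd$ this is enough, and in fact the convergence in $\cC^{1,\beta}_{\mathrm{loc}}$ obtained in the proof of Theorem~\ref{T1.1} gives locally uniform convergence in $x$ if a uniform statement is wanted.) Therefore there is $n_0$ with $\cJ_\alpha^{v_{n_0}}(x,c)\le\cJ_\alpha^{v^*}(x,c)+\epsilon = \inf_{U\in\Uadm}\cJ_\alpha^{U}(x,c)+\epsilon$, and we set $v_\epsilon^*\df v_{n_0}$. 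Identically, by Theorem~\ref{TDPCP} there is a sequence $\{\bar v_m\}_m\subset\Usm$ of piecewise constant policies with $\bar v_m\to v^*$ in the Borkar topology, and the same continuity argument yields $m_0$ with $\cJ_\alpha^{\bar v_{m_0}}(x,c)\le\inf_{U\in\Uadm}\cJ_\alpha^{U}(x,c)+\epsilon$; set $\bar v_\epsilon^*\df\bar v_{m_0}$. This establishes both inequalities in \cref{ET1.2A}.

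The only genuine obstacle is the reduction to the stationary Markov class, i.e.\ the existence of an optimal $v^*\in\Usm$ with $\cJ_\alpha^{v^*}=\inf_{U\in\Uadm}\cJ_\alpha^{U}$; everything else is a soft combination of continuity and density. That reduction is standard for non-degenerate controlled diffusions under (A1)--(A3) — one verifies that the HJB equation has a solution in $\Sobl^{2,p}\cap\cC_b$ via the a priori estimates already used in the proof of Theorem~\ref{T1.1}, that a measurable minimizing selector exists (measurable selection theorem, using joint continuity of $b$ and $c$ in $\zeta$ and compactness of $\Act$), and that the Itô--Krylov verification argument identifies $V_\alpha$ with both $\cJ_\alpha^{v^*}$ and the infimum over $\Uadm$. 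One subtlety worth flagging is that the denseness theorems are stated for approximating a \emph{given} $v\in\Usm$, and the optimal selector $v^*$ is such a policy, so they apply directly; no further uniformity over policies is needed. If one instead wished to avoid invoking existence of an optimal stationary policy, an alternative is to take a near-optimal \emph{admissible} control, but since Borkar denseness is formulated on $\Usm$ (not on $\Uadm$) the cleanest route is the one above.
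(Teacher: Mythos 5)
Your proposal is correct and follows essentially the same route as the paper: existence of an optimal $v^*\in\Usm$ via \cite[Theorem~3.5.6]{ABG-book}, then continuity (Theorem~\ref{T1.1}) combined with the denseness results (Lemma~\ref{DenseBorkarTopo} and Theorem~\ref{TDPCP}) to pick near-optimal finite-action and piecewise constant policies. The extra detail you give on the HJB selector and on pointwise versus locally uniform convergence is consistent with, but not needed beyond, the paper's argument.
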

\begin{proof}
From \cite[Theorem~3.5.6]{ABG-book}, it follows that there exists $v^*\in \Usm$ such that $\cJ_{\alpha}^{v^*}(x, c) = \inf_{U\in \Uadm}\cJ_{\alpha}^{U}(x, c)$ for all $x\in \Rd$\,. Since the map $v\mapsto \cJ_{\alpha}^{v}(x, c)$ is continuous on $\Usm$ (see, Theorem~ \ref{T1.1}) and the space of quatized stationary policies are dense in $\Usm$ (see, Lemma~\ref{DenseBorkarTopo}), it follows that for each $\epsilon > 0$ there exists a quatized policy $v_{\epsilon}^*\in \Usm$ satisfying \cref{ET1.2A}\,. Similarly, since the peicewise constant policies are dense in $\Usm$ (see, Theorem~\ref{TDPCP}), we conclude that for any $\epsilon > 0$ there exists $\bar{v}_{\epsilon}^*\in\Usm$ which satisfies \cref{ET1.2A}\,. This completes the proof.
\end{proof}
We now show that for the cost upto an exit time, the quantized (finite action/ piecewise constant) policies are near optimal\,.
\begin{theorem}\label{T1.2ExitCost}
Suppose Assumptions (A1)-(A3) hold. Then for each $\epsilon >0$ there exists a policy $v_{\epsilon}^*\in \Usm$ with finite actions and piecewise constant policies $\bar{v}_{\epsilon}^* \in \Usm$ such that 
\begin{equation}\label{ET1.2ExitCostA}
\hat{\cJ}_{e}^{v_{\epsilon}^*}(x) \leq \inf_{U\in \Uadm}\hat{\cJ}_{e}^{U}(x) + \epsilon \quad\text{and}\quad \hat{\cJ}_{e}^{\bar{v}_{\epsilon}^*}(x) \leq \inf_{U\in \Uadm}\hat{\cJ}_{e}^{U}(x) + \epsilon \quad\quad\text{for all}\,\, x\in\Rd\,.
\end{equation}
\end{theorem}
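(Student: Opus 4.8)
The plan is to follow verbatim the three‑step template used in the proof of Theorem~\ref{T1.2}: (i) produce an optimal stationary Markov policy $v^*\in\Usm$ for the exit‑time criterion; (ii) invoke the continuity of $v\mapsto\hat{\cJ}_e^v$ on $\Usm$ established in Theorem~\ref{T1.1Exit}; and (iii) invoke the denseness of finite‑action policies (Lemma~\ref{DenseBorkarTopo}) and of piecewise constant policies (Theorem~\ref{TDPCP}) in $\Usm$ under the Borkar topology. Concatenating these, given $\epsilon>0$ one picks a sequence of quantized (resp.\ piecewise constant) policies $v_n\to v^*$ in $\Usm$; by Theorem~\ref{T1.1Exit} we get $\hat{\cJ}_e^{v_n}(\cdot)\to\hat{\cJ}_e^{v^*}(\cdot)=\hat{\cJ}_e^{*}(\cdot)$, so any sufficiently large index yields the desired $v_\epsilon^*$ (resp.\ $\bar v_\epsilon^*$).

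For step (i) I would characterize the optimal value $\hat{\cJ}_e^*$ as the unique solution $\psi^*\in\Sob^{2,p}(O)\cap\cC(\bar O)$, any $p>d$, of the Hamilton--Jacobi--Bellman Dirichlet problem
\begin{equation*}
\min_{\zeta\in\Act}\Bigl[\sL_{\zeta}\psi^*(x)-\delta(x,\zeta)\psi^*(x)+c(x,\zeta)\Bigr]=0 \quad\text{in } O,\qquad \psi^*=h \quad\text{on }\partial O,
\end{equation*}
whose solvability follows from standard elliptic theory for such Dirichlet problems together with Assumptions (A1)--(A3), $\delta\in\cC(\bar O\times\Act)$, $\delta\ge 0$, and $h\in\Sob^{2,p}(O)$ (cf.\ \cite{GilTru}, \cite{BL84-book}, \cite{ABG-book}). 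A measurable minimizing selector $v^*(x)\in\Argmin_{\zeta}\bigl[\sL_\zeta\psi^*(x)-\delta(x,\zeta)\psi^*(x)+c(x,\zeta)\bigr]$ exists since the bracket is jointly continuous in $(x,\zeta)$; then the It\^o--Krylov formula, used exactly as in the proof of Theorem~\ref{T1.1Exit}, gives $\psi^*=\hat{\cJ}_e^{v^*}$, while the HJB inequality gives $\psi^*\le\hat{\cJ}_e^{U}$ for every $U\in\Uadm$, so that $\hat{\cJ}_e^{v^*}=\hat{\cJ}_e^{*}$.

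For the uniformity over $x\in\Rd$ demanded by the statement I would note two points. First, if $x\notin O$ then $\tau(O)=0$ and $\hat{\cJ}_e^{U}(x)=h(x)$ independently of the control, so the inequality holds there trivially (with gap $0$). Second, on $\bar O$ the convergence $\hat{\cJ}_e^{v_n}\to\hat{\cJ}_e^{v^*}$ produced in Theorem~\ref{T1.1Exit} is not merely pointwise: the uniform $\Sob^{2,p}(O)$ bound obtained there, together with the compact embedding $\Sob^{2,p}(O)\hookrightarrow\cC^{1,\beta}(\bar O)$ for $p>d$ and the uniqueness of the limit, upgrades it to uniform convergence on $\bar O$; hence a single index $n$ handles every $x\in\bar O$ simultaneously, which is what $\hat{\cJ}_e^{v_\epsilon^*}(x)\le\hat{\cJ}_e^{*}(x)+\epsilon$ for all $x$ requires.

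The main obstacle is really step (i): one must confirm that the HJB Dirichlet problem for the exit‑time cost is genuinely solvable in $\Sob^{2,p}(O)$ and that the verification argument closes with the stated regularity of $\delta$ and $h$. This, however, is a now‑standard application of elliptic PDE theory plus a measurable selection, running parallel to the linear Dirichlet problem already handled for fixed $v$ in Theorem~\ref{T1.1Exit}, so no genuinely new difficulty arises beyond bookkeeping; continuity and denseness are then imported directly from the earlier results.
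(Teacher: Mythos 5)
Your proposal is correct and follows the same skeleton as the paper's proof: take an optimal stationary policy $v^*$ for the exit-time criterion, apply the continuity of $v\mapsto\hat{\cJ}_e^{v}$ from Theorem~\ref{T1.1Exit}, and approximate $v^*$ by finite-action and piecewise constant policies via Lemma~\ref{DenseBorkarTopo} and Theorem~\ref{TDPCP}. The only substantive difference is in step (i): the paper simply cites the existence of an optimal $v^*\in\Usm$ from \cite[p.~229]{B05Survey}, whereas you re-derive it by solving the HJB Dirichlet problem in $\Sob^{2,p}(O)\cap\cC(\bar O)$, choosing a measurable minimizing selector, and verifying with the It\^o--Krylov formula; this is a standard and valid route (the nondegeneracy on the bounded smooth domain $O$, $\delta\ge 0$ continuous, and $h\in\Sob^{2,p}(O)$ suffice), and it makes the argument self-contained at the cost of invoking nonlinear elliptic theory the paper outsources. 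Your two additional observations are also well taken and slightly sharpen what the paper leaves implicit: the trivial case $x\notin O$, and the upgrade of the convergence $\hat{\cJ}_e^{v_n}\to\hat{\cJ}_e^{v^*}$ to uniform convergence on $\bar O$ via the uniform $\Sob^{2,p}(O)$ bound, the compact embedding into $\cC^{1,\beta}(\bar O)$ for $p>d$, and uniqueness of the limit, which is exactly what is needed to get a single index, hence a single policy, serving all $x$ simultaneously in \cref{ET1.2ExitCostA}.
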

\begin{proof}
From \cite[p. 229]{B05Survey}, we know that there exists $v^*\in \Usm$ such that $\hat{\cJ}_{e}^{v^*}(x) = \inf_{U\in \Uadm}\hat{\cJ}_{e}^{U}(x)$\,. Now form the  continuity of the map $v\to \hat{\cJ}_{e}^{v}(x)$ (see Theorem~\ref{T1.1Exit}) and the density results (see Section~\ref{DensePol}), it is easy to see that for any given $\epsilon > 0$ there exists policies $v_{\epsilon}^*\in \Usm$ with finite actions and piecewise constant policies $\bar{v}_{\epsilon}^* \in \Usm$ satisfying \cref{ET1.2ExitCostA}\,. This completes the proof of the theorem\,.   
\end{proof}
%%%%%%%%%%%%%%%%%%%%%%%%%%%%%%%%%%%%%%%%%%%%%%%%%%%%%%%%%%%%%%%%%%%%%%%%%%%%%%%%
Next we prove the near optimality of the quantized policies for the ergodic cost under near-monotonicity assumption on the running cost\,. Let $$\Theta_{v} \df \{v_n\mid v_n \,\,\text{is the quantized policy defined as in \cref{DenseStra1} corresponding to}\,\, v \}$$ and
$$\bar{\Theta}_{v} \df \{\bar{v}_n\mid \bar{v}_n \,\,\text{is the quantized policy defined as in \cref{EBT4} corresponding to}\,\, v \}\,.$$
 In order to establish our result we are assuming that the invariant measures set $$\Gamma_{v^*} \df \{\eta_{v_n^*}\mid \eta_{v_n^*}\,\,\text{is the invariant measure corresponding to}\,\, v_n^*\in \Theta_{v^*}\}$$ and  
$$\bar{\Gamma}_{v^*} \df \{\eta_{\bar{v}_n^*}\mid \eta_{\bar{v}_n^*}\,\,\text{is the invariant measure corresponding to}\,\, \bar{v}_n^*\in \bar{\Theta}_{v^*}\}$$ are tight, where $v^*\in\Usm$ is an ergodic optimal control. The sufficient condition which assures the required tightness is the following: if there exists a non-negative inf-compact function $f\in\cC^2(\Rd)$ such that
$$\sL_{v_{n}^*} f(x) \leq \kappa_0 - f(x)\quad\text{and}\quad \sL_{\bar{v}_{n}^*} f(x) \leq \kappa_0 - f(x)$$ for some constant $\kappa_0 >0$\,.
\begin{theorem}\label{ErgodNearmOPT1}
Suppose that Assumptions (A1) - (A4) hold. Also, suppose that corresponding to the optimal policy $v^*\in \Usm$, the following set of invariant measures $\Gamma_{v^*}$ and  $\bar{\Gamma}_{v^*}$ are tight and the running cost $c$ is near monotone with respect to $\sup_{v_n^*\in\Theta_{v^*}}\sE_x(c, v_{n}^*)$ and $\sup_{\bar{v}_n^*\in\bar{\Theta}_{v^*}}\sE_x(c, \bar{v}_{n}^*)$, that is,
 $$\sup_{v_n^*\in\Theta_{v^*}}\sE_x(c, v_{n}^*) < \liminf_{\norm{x}\to\infty}\inf_{\zeta\in \Act} c(x,\zeta)\quad \text{and}\quad \sup_{\bar{v}_n^*\in\bar{\Theta}_{v^*}}\sE_x(c, \bar{v}_{n}^*) < \liminf_{\norm{x}\to\infty}\inf_{\zeta\in \Act} c(x,\zeta).$$
Then for any given $\epsilon >0$ there exists a policy $v_{\epsilon}\in \Usm$ with finite actions and a piecewise constant policy $\bar{v}_{\epsilon}\in \Usm$ such that
\begin{equation}\label{ENearmonOPTA1A}
\sE_x(c, v_{\epsilon}) \leq \sE^*(c) + \epsilon\quad \text{and}\quad \sE_x(c, \bar{v}_{\epsilon}) \leq \sE^*(c) + \epsilon\,. 
\end{equation}
\end{theorem}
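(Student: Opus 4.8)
The plan is to concatenate three ingredients already in place: existence of an ergodic-optimal policy inside the class of \emph{stable} stationary Markov controls, the denseness of quantized / piecewise-constant policies in $\Usm$ under the Borkar topology, and the continuity statement of Theorem~\ref{ergodicnearmono1}. First I would invoke \cite[Theorem~3.4.5]{ABG-book}: under (A1)--(A4) there is a stable $v^*\in\Usm$ with $\sE_x(c,v^*)=\sE^*(c)$ for every $x\in\Rd$; in particular $\sE_x(c,v^*)=\int_{\Rd}\int_{\Act}c(x,\zeta)v^*(x)(\D\zeta)\,\eta_{v^*}(\D x)$ does not depend on $x$, so $\inf_{\Rd}\sE_x(c,v^*)=\sE^*(c)$.

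For the finite-action assertion I would take the approximating sequence $\{v_n^*\}=\Theta_{v^*}$ defined by \cref{DenseStra1}; by Lemma~\ref{DenseBorkarTopo}, $v_n^*\to v^*$ in $\Usm$. Then I would verify the hypotheses of Theorem~\ref{ergodicnearmono1} for $\{v_n^*\}$: (i) each $v_n^*$ is stable with invariant measure $\eta_{v_n^*}$ --- this is built into the standing assumption that $\Gamma_{v^*}=\{\eta_{v_n^*}\}$ is a well-defined, tight family (or, alternatively, follows from the displayed Lyapunov condition together with \cite[Lemma~3.3.4]{ABG-book}); (ii) $\{\eta_{v_n^*}\}$ is tight --- assumed; (iii) $\sup_n\sE_x(c,v_n^*)<\liminf_{\norm{x}\to\infty}\inf_{\zeta\in\Act}c(x,\zeta)$ --- this is exactly the near-monotonicity of $c$ with respect to $\sup_{v_n^*\in\Theta_{v^*}}\sE_x(c,v_n^*)$ that we assume (the left-hand side is $x$-independent, each $v_n^*$ being stable, and $\Theta_{v^*}=\{v_n^*\}$). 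Theorem~\ref{ergodicnearmono1} then gives $\inf_{\Rd}\sE_x(c,v_n^*)\to\inf_{\Rd}\sE_x(c,v^*)=\sE^*(c)$, and since each $v_n^*$ is stable, $\inf_{\Rd}\sE_x(c,v_n^*)=\sE_x(c,v_n^*)$. Hence $\sE_x(c,v_n^*)\to\sE^*(c)$, and choosing $n$ large and $v_\epsilon\df v_n^*$ delivers the first inequality in \cref{ENearmonOPTA1A} for all $x$.

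For the piecewise-constant assertion the argument is identical: replace $\Theta_{v^*}$ by $\bar\Theta_{v^*}$ (the sequence $\{\bar v_n^*\}$ from \cref{EBT4}), use Theorem~\ref{TDPCP} instead of Lemma~\ref{DenseBorkarTopo} to get $\bar v_n^*\to v^*$ in $\Usm$, and invoke the assumed tightness of $\bar\Gamma_{v^*}$ together with the near-monotonicity of $c$ relative to $\sup_{\bar v_n^*\in\bar\Theta_{v^*}}\sE_x(c,\bar v_n^*)$; Theorem~\ref{ergodicnearmono1} then yields $\sE_x(c,\bar v_n^*)\to\sE^*(c)$, and one sets $\bar v_\epsilon\df\bar v_n^*$ for $n$ large.

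The only delicate point --- and the reason the extra hypotheses are present --- is making the assumptions of Theorem~\ref{ergodicnearmono1} hold along the approximating sequences: near-monotonicity of $c$ with respect to $\sE^*(c)$ alone would be insufficient, since the quantized policies may have strictly larger ergodic cost, and without the uniform bound $\sup_n\sE_x(c,v_n^*)<\liminf_{\norm{x}\to\infty}\inf_{\zeta\in\Act}c(x,\zeta)$ one loses the near-monotone regime needed for Theorem~\ref{NearmonotPoisso}; likewise, without the a priori tightness of $\Gamma_{v^*}$ the limit $\inf_{\Rd}\sE_x(c,v_n^*)$ may fail to equal $\inf_{\Rd}\sE_x(c,v^*)$, as the Remark following Theorem~\ref{ergodicnearmono1} (and \cite{AA13}) warns. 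Granting those assumptions, the proof is a direct composition of the optimality, denseness, and continuity results, requiring no further estimates.
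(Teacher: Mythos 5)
Your proposal is correct and follows essentially the same route as the paper's proof: existence of a stable ergodic-optimal $v^*\in\Usm$ (the paper cites \cite[Theorem~3.6.10]{ABG-book} rather than Theorem~3.4.5, an immaterial difference), denseness of the quantized sequences $\Theta_{v^*}$, $\bar\Theta_{v^*}$ from Lemma~\ref{DenseBorkarTopo} and Theorem~\ref{TDPCP}, and the continuity statement of Theorem~\ref{ergodicnearmono1} under the assumed tightness and uniform near-monotonicity. Your explicit verification of the hypotheses of Theorem~\ref{ergodicnearmono1} along the approximating sequences and the passage from $\inf_{\Rd}\sE_x$ to $\sE_x$ via stability only spell out details the paper leaves implicit.
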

\begin{proof}
From \cite[Theorem~3.6.10]{ABG-book}, we know there exits a stable $v^*\in \Usm$ such that $\sE_x(c, v^*) = \sE^*(c)$\,. Since, by our assumption, the set of invariant measures $\Gamma_{v^*}$ and  $\bar{\Gamma}_{v^*}$ are tight. Thus by the continuity result (see Theorem~\ref{ergodicnearmono1}) and the density results (see Lemma~\ref{DenseBorkarTopo}, Theorem~\ref{TDPCP}), we deduce that for each $\epsilon>0$ there exists $v_{\epsilon}\in \Usm$ with finite actions and piecewise constant policy $\bar{v}_{\epsilon}\in \Usm$ such that \cref{ENearmonOPTA1A} holds\,. This completes the proof.
\end{proof}
%%%%%%%%%%%%%%%%%%%%%%%%%%%%%%%%%%%%%%%%%%%%%%%%%%%%%%%%%%%%%%%%%%%%%%%%%%%%%%%
Now for the ergodic cost criterion, under the Lyapunov type stability assumption we prove near optimality of quantized policies. 
\begin{theorem}\label{TErgoOptApprox1}
Suppose that assumptions (A1) - (A3) and (A5) hold. Then for any given $\epsilon>0$ there exists a quantized policy $v_{\epsilon}\in \Usm$ with finite actions and a piecewise constant policy $\bar{v}_{\epsilon}^* \in \Usm$ such that
\begin{equation}\label{ENearmonOPTA1}
\sE_x(c, v_{\epsilon}) \leq \sE^*(c) + \epsilon \quad\text{and}\quad \sE_x(c, \bar{v}_{\epsilon}) \leq \sE^*(c) + \epsilon\,. 
\end{equation}
\end{theorem}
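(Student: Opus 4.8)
The plan is to follow the same template as the proof of Theorem~\ref{ErgodNearmOPT1}, but the argument is in fact cleaner here because Assumption~(A5) dispenses with any auxiliary tightness hypothesis: under the Lyapunov condition \cref{Lyap1} every stationary Markov control is stable, the ergodic cost $\sE_x(c,v)$ is independent of $x$ (it equals $\widehat{\rho}^{v}$ by Theorem~\ref{TErgoExis1}(i)), and Theorem~\ref{ergodicLyap1} already furnishes the continuity of $v\mapsto\inf_{\Rd}\sE_x(c,v)=\widehat{\rho}^{v}$ on the whole of $\Usm$ in the Borkar topology, with no extra assumptions on the approximating sequence.

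First I would invoke the existence of an ergodic optimal stationary Markov control: under (A1)--(A3) and (A5) there is $v^*\in\Usm$ with $\sE_x(c,v^*)=\sE^*(c)$ for every $x\in\Rd$, from the standard existence/verification theory for the ergodic HJB equation under a Lyapunov stability condition (see \cite{ABG-book}). Next, by Lemma~\ref{DenseBorkarTopo} the finite-action policies $\{v^*_n\}_n$ associated to $v^*$ as in \cref{DenseStra1} converge to $v^*$ in the Borkar topology, and by Theorem~\ref{TDPCP} the piecewise constant policies $\{\bar v^*_m\}_m$ built from $v^*$ as in \cref{EBT4} also converge to $v^*$ in the Borkar topology.

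Then I would apply Theorem~\ref{ergodicLyap1} separately to each of these two sequences. Since $v^*_n\to v^*$ in $\Usm$, we get $\sE_x(c,v^*_n)=\inf_{\Rd}\sE_y(c,v^*_n)\to\inf_{\Rd}\sE_y(c,v^*)=\sE^*(c)$, and likewise $\sE_x(c,\bar v^*_m)\to\sE^*(c)$. Hence, given $\epsilon>0$, choosing $n$ and $m$ large enough and setting $v_\epsilon\df v^*_n$ (finite actions) and $\bar v_\epsilon\df\bar v^*_m$ (piecewise constant) yields \cref{ENearmonOPTA1}, which completes the proof.

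There is no substantial obstacle in this argument; the only point that needs a line of care is the identification $\sE^*(c)=\widehat{\rho}^{v^*}$ together with the fact that $\sE_x(c,v)$ is constant in $x$ for every $v\in\Usm$ under (A5), so that $\inf_{\Rd}\sE_x(c,v^*_n)$ and $\inf_{\Rd}\sE_x(c,\bar v^*_m)$ are precisely the quantities controlled by Theorem~\ref{ergodicLyap1}. Everything else is a direct chaining of the continuity result with the two denseness theorems of Section~\ref{DensePol}.
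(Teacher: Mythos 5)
Your proposal is correct and follows essentially the same route as the paper: existence of an ergodic optimal $v^*\in\Usm$ under (A5) (the paper cites \cite[Theorem~3.7.14]{ABG-book}), the denseness of finite-action and piecewise constant policies from Lemma~\ref{DenseBorkarTopo} and Theorem~\ref{TDPCP}, and the continuity of $v\mapsto\inf_{\Rd}\sE_x(c,v)$ from Theorem~\ref{ergodicLyap1}. Your extra remark that $\sE_x(c,v)=\widehat{\rho}^{v}$ is constant in $x$ under (A5) (via Theorem~\ref{TErgoExis1}(i)) is a correct and harmless clarification of what the paper uses implicitly.
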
 
\begin{proof}
From \cite[Theorem~3.7.14]{ABG-book}, we know that there exists $v^*\in \Usm$ such that $\sE_x(c, v^*) = \sE^*(c)$. Now, since the space of quantized polices and piecewise constant policies are dense in $\Usm$ (see, Lemma~\ref{DenseBorkarTopo} and Theorem~\ref{TDPCP}) and the map $v\to \inf_{\Rd}\sE_x(c, v)$ is continuous on $\Usm$ (see, Theorem~\ref{ergodicLyap1}). For any given $\epsilon>0$, one can find a quantized policy $v_{\epsilon}\in\Usm$ with finite actions and a piecewise constant policy $\bar{v}_{\epsilon}^* \in \Usm$ such that \cref{ENearmonOPTA1} holds.   
\end{proof}

\begin{remark}\label{RContStaNear1}
In view of the continuity (see Section~\ref{CDiscCost}, Section~\ref{CErgoCost})
and the denseness (see Theorem~\ref{TDContP}) results, we have the near optimality of continuous stationary policies\,.
\end{remark} 
\section{Finite Horizon Cost: Time Discretization of Markov Policies and Near Optimality of Piecewise Constant Policies}\label{TimeDMarkov}

Recall \cref{FiniteCost1} as our cost criterion for the finite horizon setup. We will present three results in this section, where the ultimate goal is to arrive at near optimality of piecewise constant policies. While this approximation problem is a well-studied problem \cite{KD92}, \cite{HK-02A}, \cite{RF-16A}, our proof method is rather direct and appears to be new. Under uniform Lipschitz continuity and uniform boundedness assumptions on the diffusion coefficients and running cost function, in \cite{KD92}, \cite{HK-02A}, \cite{RF-16A} the authors have established similar approximation results using numerical procedures\,.  

\subsection*{Continuity of Finite Horizon Cost on Markov Policies under the Borkar Topology}
For simplicity, in this subsection we are assuming that $a, b, c$ are uniformly bounded (it is possible to relax these boundedness assumptions). In particular we are assuming that
\begin{itemize}
\item[\hypertarget{B1}{{(B1)}}]
The functions $a, b, c$ are are uniformly bounded, i.e., 
\begin{equation*}
\sup_{(x,\zeta)\in \Rd\times \Act}\left[\abs{b(x,\zeta)} + \norm{a(x)} + \sum_{i}^{d} \norm{\frac{\partial{a}}{\partial x_i}(x)} + \abs{c(x, \zeta)}\right] \,\le\, \mathrm{K}\,.
\end{equation*} for some positive constant $\mathrm{K}$\,. Moreover, $H\in \Sob^{2,p,\mu}(\Rd)\cap \Lp^{\infty}(\Rd)$\,,\,\, $p\ge 2$\,. 
\end{itemize} In view of \cite[Theorem~3.3, p. 235]{BL84-book}, the optimality equation (or, the HJB equation)
\begin{align*}
&\frac{\partial \psi}{\partial t} + \inf_{\zeta\in \Act}\left[\sL_{\zeta}\psi + c(x, \zeta) \right] = 0 \\
& \psi(T,x) = H(x)
\end{align*} admits a unique solution $\psi\in \Sob^{1,2,p,\mu}((0, T)\times\Rd)\cap \Lp^{\infty}((0, T)\times\Rd)$\,,\,\, $p\ge 2$\,. Thus, by It\^{o}-Krylov formula (see the verification results as in \cite[Theorem~3.5.2]{HP09-book}), we know the existence of an optimal Markov policy, that is, there exists $v^*\in \Um$ such that $\cJ_{T}(x, v^*) = \cJ_{T}^*(x)$\,.

In the following theorem, we show that the finite horizon cost is continuous in $\Um$ with respect to the Borkar topology (see Definition~\ref{BKTP1})\,. 
\begin{theorem}\label{TContFHC}
Suppose Assumptions (A1), (A3) and (B1) hold. Then the map $v\mapsto \cJ_{T}(x, v)$ from $\Um$ to $\RR$ is continuous. 
\end{theorem}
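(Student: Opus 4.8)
The plan is to run the argument of Theorem~\ref{T1.1} in the parabolic setting, replacing the elliptic Poisson equation by the linear terminal–value problem satisfied by the finite–horizon cost, and the elliptic $\Lp^p$–estimates and Sobolev embeddings by their parabolic analogues in the weighted spaces $\Sob^{1,2,p,\mu}$ already used above for the HJB equation.

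First I would fix a sequence $v_n\to v$ in $\Um$ and, for each $n$, set $\psi_n(t,x)\df\Exp_{t,x}^{v_n}\bigl[\int_t^T c(X_s,v_n(s,X_s))\,\D s+H(X_T)\bigr]$, so that $\cJ_{T}(x,v_n)=\psi_n(0,x)$. By the verification/representation results and the linear parabolic theory of \cite{BL84-book} invoked above for the HJB equation, $\psi_n$ is the unique element of $\Sob^{1,2,p,\mu}((0,T)\times\Rd)\cap\Lp^{\infty}((0,T)\times\Rd)$, $p\ge 2$, solving
\[
\frac{\partial\psi_n}{\partial t}+\sL_{v_n}\psi_n(t,x)+c(x,v_n(t,x))=0,\qquad \psi_n(T,\cdot)=H,
\]
where $\sL_{v_n}$ here carries the time–dependent drift $b(x,v_n(t,x))$. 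Since $c$ and $H$ are bounded and the coefficients $a,b$ are globally bounded with $a$ locally uniformly elliptic (Assumptions (B1), (A3)), the $\psi_n$ are uniformly bounded and the linear parabolic $\Lp^p$–estimate in the weighted space yields a bound $\norm{\psi_n}_{1,2,p,\mu}\le\kappa$ uniform in $n$. Fixing $p>d+2$, the parabolic Sobolev embedding (compact into a Hölder space controlling both $\psi$ and $\grad\psi$ on compacts, up to $t=T$), together with Banach--Alaoglu and a diagonal argument, produces a subsequence $\{v_{n_k}\}$ with $\psi_{n_k}\to\psi^*$ weakly in $\Sobl^{1,2,p}((0,T)\times\Rd)$ and strongly in $\cC^{1+\beta,(1+\beta)/2}_{\mathrm{loc}}$, for some $\psi^*\in\Sob^{1,2,p,\mu}((0,T)\times\Rd)\cap\Lp^{\infty}$.

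Next I would pass to the limit in the equation, exactly as in Theorem~\ref{T1.1}. Writing
\[
b(x,v_{n_k})\cdot\grad\psi_{n_k}-b(x,v)\cdot\grad\psi^* = b(x,v_{n_k})\cdot\grad(\psi_{n_k}-\psi^*)+\bigl(b(x,v_{n_k})-b(x,v)\bigr)\cdot\grad\psi^*,
\]
the first term tends to $0$ locally uniformly by the strong $\cC^{1+\beta,(1+\beta)/2}_{\mathrm{loc}}$–convergence and the local boundedness of $b$, while the second vanishes when tested against any $\phi\in\cC_{\mathrm c}^{\infty}((0,T)\times\Rd)$ by the Borkar topology on Markov policies (Definition~\ref{BKTP1}): for such $\phi$, each product $\phi(t,x)\,\partial_{x_i}\psi^*(t,x)$ lies in $\Lp^1\cap\Lp^2((0,T)\times\Rd)$ (compact support, $\grad\psi^*$ continuous hence bounded there) and $b^i\in\cC_b(\Rd\times[0,\infty)\times\Act)$ by (B1), so \cref{BorkarTopologyM} applies. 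Combined with the weak $\Sob^{1,2,p,\mu}$–convergence of $\partial_t\psi_{n_k}$ and $\grad^2\psi_{n_k}$, this shows $\psi^*$ solves the limiting linear problem $\partial_t\psi^*+\sL_{v}\psi^*+c(x,v(t,x))=0$ with $\psi^*(T,\cdot)=H$ (the terminal datum being retained since every $\psi_{n_k}$ carries it and the convergence reaches $t=T$). By the uniqueness of solutions to this linear terminal–value problem \cite{BL84-book} and the It\^o--Krylov stochastic representation, $\psi^*(t,x)=\Exp_{t,x}^{v}\bigl[\int_t^T c(X_s,v(s,X_s))\,\D s+H(X_T)\bigr]$, hence $\psi^*(0,\cdot)=\cJ_{T}(\cdot,v)$. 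Since the limit does not depend on the subsequence, the whole sequence converges and $\cJ_{T}(x,v_n)=\psi_n(0,x)\to\psi^*(0,x)=\cJ_{T}(x,v)$.

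The main obstacle is the parabolic regularity on the unbounded domain: one must secure the uniform bound and pass to the limit in the \emph{weighted} spaces $\Sob^{1,2,p,\mu}$, so that the terminal condition at $t=T$ survives in the limit and the representation formula can be applied to the limit $\psi^*$; and one must take $p>d+2$ so that the parabolic embedding gives strong \emph{local} convergence of the gradients $\grad\psi_{n_k}$, which is exactly what renders the drift term $b(x,v_{n_k})\cdot\grad\psi_{n_k}$ manageable and lets the Borkar topology close the argument. The remaining steps are routine time–dependent adaptations of the elliptic computations in Theorem~\ref{T1.1}.
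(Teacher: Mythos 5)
Your proposal is correct and follows essentially the same route as the paper: identify $\cJ_T(\cdot,v_n)$ with the unique $\Sob^{1,2,p,\mu}\cap\Lp^\infty$ solution of the linear terminal-value problem via \cite{BL84-book} and It\^o--Krylov, extract a uniform $\Sob^{1,2,p,\mu}$ bound from the boundedness of $c,H$ and the parabolic $\Lp^{p,\mu}$ estimate, pass to a weak limit plus strong convergence of gradients, use the Borkar topology on $\Um$ to identify the limiting drift term, and conclude by uniqueness and the stochastic representation of the limit equation. The only cosmetic difference is the compactness step (you use a local parabolic H\"older embedding with $p>d+2$ and a diagonal argument, the paper uses strong convergence in $\Sob^{0,1,p,\mu}$), which does not change the substance of the argument.
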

\begin{proof}
Let $v_n$ be a sequence in $\Um$ such that $v_n \to v$ in $\Um$, for some $v\in\Um$\,. From \cite[Theorem~3.3, p. 235]{BL84-book}, we have that for each $n\in\NN$ there exists a unique solution $\psi_n\in\Sob^{1,2,p,\mu}((0, T)\times\Rd)\cap \Lp^{\infty}((0, T)\times\Rd)$\,,\,\, $p\ge 2$ to the following Poisson equation
\begin{align}\label{TContFHC1A}
&\frac{\partial \psi_n}{\partial t} + \left[\sL_{v_n}\psi_n + c(x, v_n(t,x)) \right] = 0 \nonumber\\
& \psi_n(T,x) = H(x)\,.
\end{align} By It\^{o}-Krylov formula, we deduce that
\begin{align}\label{TContFHC1B}
\psi_{n}(t,x) = \Exp_x^{v_n}\left[\int_t^{T} c(X_s, v_n(s, X_s)) \D{s} + H(X_T)\right]
\end{align} 
%Thus, from \cref{TContFHC1B}, for any small $h \geq 0$ we get
%\begin{align*}
%\psi_{n}(t,x) = \Exp_x^{v_n}\left[\int_t^{t+h} c(X_s, v_n(s, X_s)) \D{s} +\int_{t+h}^{T} c(X_s, v_n(s, X_s)) \D{s} + H(X_T)\right]\,.
%\end{align*} This implies that
%\begin{align*}
%\abs{\psi_{n}(t,x) - \psi_{n}(t+h,x)} &\leq \Exp_x^{v_n}\left[\int_t^{t+h} c(X_s, v_n(s, X_s)) \D{s}\right]\\
%&\leq \abs{h}\norm{c}_{\infty}\,.
%\end{align*} This indeed gives us 
%\begin{equation}\label{TContFHC1C}
%\norm{\frac{\partial \psi_n}{\partial t}}_{\infty} \leq \norm{c}_{\infty}\,.
%\end{equation}
This gives us 
\begin{equation}\label{TContFHC1C}
\norm{\psi_n}_{\infty} \leq T\norm{c}_{\infty} + \norm{H}_{\infty}\,.
\end{equation} Rewriting \cref{TContFHC1A}, we get 
\begin{align*}
&\frac{\partial \psi_n}{\partial t} + \sL_{v_n}\psi_n + \lambda_0 \psi_n = \lambda_0 \psi_n - c(x, v_n(t,x))  \nonumber\\
& \psi_n(T,x) = H(x)\,,
\end{align*} for some fixed $\lambda_0 >0$\,. Thus, by parabolic pde estimate \cite[eq. (3.8), p. 234]{BL84-book}, we deduce that
\begin{equation}\label{TContFHC1D}
\norm{\psi_n}_{\Sob^{1,2,p,\mu}} \leq \kappa_1 \norm{\lambda_0 \psi_n - c(x, v_n(t,x))}_{\Lp^{p,\mu}}\,.
\end{equation} Hence, from \cref{TContFHC1C}, \cref{TContFHC1D}, it follows that
$\norm{\psi_n}_{\Sob^{1,2,p,\mu}} \leq \kappa_2$ for some positive constant $\kappa_2$ (independent of $n$)\,. Since $\Sob^{1,2,p,\mu}((0, T)\times\Rd)$ is a reflexive Banach space, as a corollary of Banach Alaoglu theorem, there exists $\psi^*\in\Sob^{1,2,p,\mu}((0, T)\times\Rd)$ such that along a subsequence (without loss of generality denoting by same sequence) 
\begin{equation}\label{TContFHC1E}
\begin{cases}
\psi_n \to & \psi^*\quad \text{in}\quad \Sob^{1,2,p,\mu}((0, T)\times\Rd)\quad\text{(weakly)}\\
\psi_n \to & \psi^*\quad \text{in}\quad \Sob^{0,1,p,\mu}((0, T)\times\Rd)\quad \text{(strongly)}\,.
\end{cases}       
\end{equation}  Since $v_n\to v$ in $\Um$, multiplying both sides of the \cref{TContFHC1A} by test function $\phi\in\cC_c^{\infty}((0, T)\times \Rd)$ and integrating, we get
\begin{align}\label{TContFHC1EA}
\int_{0}^{T}\int_{\Rd}\frac{\partial \psi_n}{\partial t}\phi(t,x)\D t \D x + & \int_{0}^{T}\int_{\Rd}\trace\bigl(a(x)\grad^2 \psi_n\bigr)\phi(t,x)\D t \D x \nonumber\\
& + \int_{0}^{T}\int_{\Rd}\{b(x,v_{n}(t,x))\cdot \grad \psi_n +  c(x, v_{n}(t,x))\}\phi(t,x)\D t \D x = 0\,.
\end{align}
In view of \cref{TContFHC1E}, letting $n\to \infty$, from \cref{TContFHC1EA} we obtain that 
\begin{align*}
\int_{0}^{T}\int_{\Rd}\frac{\partial \psi^*}{\partial t}\phi(t,x)\D t \D x + & \int_{0}^{T}\int_{\Rd}\trace\bigl(a(x)\grad^2 \psi^*\bigr)\phi(t,x)\D t \D x \nonumber\\
& + \int_{0}^{T}\int_{\Rd}\{b(x,v(t,x))\cdot \grad \psi^* +  c(x, v(t,x))\}\phi(t,x)\D t \D x = 0\,.
\end{align*}
This implies that $\psi^*\in\Sob^{1,2,p,\mu}((0, T)\times\Rd)$ satisfies
\begin{align}\label{TContFHC1F}
&\frac{\partial \psi^*}{\partial t} + \left[\sL_{v}\psi^* + c(x, v(t,x)) \right] = 0 \nonumber\\
& \psi(T,x) = H(x)\,.
\end{align} 
Again, by It\^{o}-Krylov formula, it follows that
\begin{align}\label{TContFHC1G}
\psi^{*}(t,x) = \Exp_x^{v}\left[\int_t^{T} c(X_s, v(s, X_s)) \D{s} + H(X_T)\right]\,.
\end{align} Therefore, from \cref{TContFHC1B} and \cref{TContFHC1G}, we conclude that $v\mapsto \cJ_{T}(x, v)$ from $\Um$ to $\RR$ is continuous.
\end{proof}

\subsection{Time Discretization of Markov Policies}
Following, and briefly modifying, our approach so far involving stationary policies, in this section we show that piece-wise constant Markov policies are dense in the space of Markov policies $\Um$\,. Also, using this result we deduce the near optimality of piece-wise constant Markov policies\,. 
\begin{theorem}\label{TDPCMP}
For any $v\in \Um$ there exists a sequence of piecewise constant policies $\{v_m\}_{m}$ such that
\begin{equation}\label{BorkarTopology3}
\lim_{m\to\infty}\int_{0}^{\infty}\int_{\Rd}f(x,t)\int_{\Act}g(x,t,\zeta)v_{m}(x,t)(\D \zeta)\D x \D t = \int_{0}^{\infty}\int_{\Rd}f(x,t)\int_{\Act}g(x,t,\zeta)v(x,t)(\D \zeta)\D x \D t
\end{equation}
for all $f\in L^1(\Rd\times [0, \infty))\cap L^2(\Rd\times [0, \infty))$ and $g\in \cC_b(\Rd\times [0, \infty)\times \Act)$ \,.
\end{theorem}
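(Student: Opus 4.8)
The proof will mirror that of Theorem~\ref{TDPCP} almost verbatim, carrying the time variable $t$ along as an additional coordinate of the ``state''. First I would exhaust $\Rd\times[0,\infty)$ by the increasing family of bounded measurable shells $E_{1}\df\bar{\sB}_{1}\times[0,1]$ and $E_{n}\df\bigl(\bar{\sB}_{n}\times[0,n]\bigr)\setminus\bigl(\bar{\sB}_{n-1}\times[0,n-1]\bigr)$ for $n\ge2$, so that $\Rd\times[0,\infty)=\bigsqcup_{n\ge1}E_{n}$ and $\abs{E_{n}}<\infty$. Restricting the measurable map $v$ to each $E_{n}$ and applying Lusin's theorem \cite[Theorem~7.5.2]{D02-book} followed by Tietze's extension theorem \cite[Theorem~4.1]{DG51} (the target $\pV$ being convex and compact), for each $\epsilon_{n}>0$ one obtains a continuous map $\tilde v_{n}^{\epsilon_{n}}\colon E_{n}\to\pV$ agreeing with $v$ outside a set of Lebesgue measure at most $\epsilon_{n}$.

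Next, as in Step~1 of the proof of Theorem~\ref{TDPCP}, I would quantize the compact metric space $(\pV,d_{\mathscr{P}})$ by a finite $\frac{1}{m}$-net $\widehat{\Lambda}_{m}=\{\mu_{m,1},\dots,\mu_{m,k_{m}}\}$ with nearest-neighbour map $\widehat{Q}_{m}$ and induced partition $\{\widehat{U}_{m,i}\}$ of diameter $<\frac{2}{m}$, pull this partition back through $\tilde v_{n}^{\epsilon_{n}}$ to a measurable partition $\{E_{n,i}^{m}\}_{i}$ of $E_{n}$, and define the piecewise constant policy $\hat v_{n,m}^{\epsilon_{n}}\df\sum_{i}\mu_{m,i}\Ind_{\{E_{n,i}^{m}\}}$. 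For test functions of the product form $\hat f(x,t)\hat g(\zeta)$ with $\hat f\in\Lp^{1}(\Rd\times[0,\infty))\cap\Lp^{2}(\Rd\times[0,\infty))$ and $\hat g\in\cC(\Act)$, H\"older's inequality together with $\abs{E_{n}\setminus K_{n}^{\epsilon_{n}}}\le\epsilon_{n}$ and $\diam(\widehat{U}_{m,i})<\frac{2}{m}$ yields the estimate analogous to \cref{EBT3}; patching over $n$ via $\bar v_{m}^{\epsilon}\df\sum_{n}\hat v_{n,m}^{\epsilon_{n}}$ and discarding the tail (using $\hat f\in\Lp^{1}$) then gives \cref{BorkarTopology3} for all such product test functions.

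Finally, to reach a general $f\in\Lp^{1}(\Rd\times[0,\infty))\cap\Lp^{2}(\Rd\times[0,\infty))$ and $g\in\cC_{b}(\Rd\times[0,\infty)\times\Act)$, I would reproduce Step~3: truncate to a bounded cylinder $\sB_{N}\times[0,N]\times\Act$; invoke the Stone--Weierstrass theorem on this compact set to approximate $g$ uniformly by finite sums $\sum_{i}r_{i}(x,t)p_{i}(\zeta)$ with $r_{i}\in\cC(\bar{\sB}_{N}\times[0,N])$ and $p_{i}\in\cC(\Act)$; replace each $p_{i}$ and each product $fr_{i}$ by members of fixed countable dense subsets of $\cC(\Act)$ and $\Lp^{1}(\Rd\times[0,\infty))$ respectively, along which the product-form estimate of the previous step is already available; and sum the resulting finitely many error terms exactly as in \cref{EBT10} and \cref{EBT11}. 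I expect this last step --- the Stone--Weierstrass and density bookkeeping --- to be the only genuine obstacle, precisely as in Theorem~\ref{TDPCP}: the convergence established for separated test functions $\hat f(x,t)\hat g(\zeta)$ must be promoted to a coupled $g(x,t,\zeta)$ while every constant (in particular the threshold index in $m$) is kept independent of the product approximation. All remaining manipulations are a routine transcription of the stationary argument with $x$ replaced by $(x,t)$; the policies $\bar v_{m}^{\epsilon}$ so produced are constant on a partition of $\Rd\times[0,\infty)$ into countably many measurable pieces and take values in the finite set $\widehat{\Lambda}_{m}$, hence are piecewise constant in the sense required, and, combined with the continuity of $v\mapsto\cJ_{T}(x,v)$ from Theorem~\ref{TContFHC}, this denseness yields the subsequent near-optimality of piecewise constant Markov policies for the finite horizon cost.
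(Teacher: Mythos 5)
Your proposal is correct and follows essentially the same route as the paper's own proof: the same shell decomposition of $\Rd\times[0,\infty)$ (the paper uses $\hat{D}_n=(\sB_n\times[0,n))\setminus(\sB_{n-1}\times[0,n-1))$), Lusin plus the Dugundji--Tietze extension on each shell, quantization of $(\pV,d_{\mathscr{P}})$ by a finite net with pulled-back partition, patching over shells with an $L^1$ tail cut-off, and the Stone--Weierstrass/dense-subset bookkeeping of Step~3 of Theorem~\ref{TDPCP} to pass from product test functions to general $g\in\cC_b(\Rd\times[0,\infty)\times\Act)$. No substantive difference from the paper's argument.
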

\begin{proof}
Let $\hat{\sB}_{0} = \emptyset$ and $\hat{\sB}_{n} = \sB_n\times [0, n)$. Then,  define $\hat{D}_{n} = \hat{\sB}_{n}\setminus \hat{\sB}_{n-1}$ for $n\in \NN$\,. Now, it is clear that $\Rd\times [0,\infty) = \cup_{n=1}^{\infty} \hat{D}_{n}$\,. Since $\bar{v}_{n}\,\df\, v\arrowvert_{\hat{D}_n} : \hat{D}_n \to \pV$ is a measurable map. As in Theorem~\ref{TDPCP}, by Lusin's theorem and Tietze's extension theorem, for any $\epsilon_n > 0$ there exists a compact set $\hat{K}_{n}^{\epsilon_n}\subset \hat{D}_n$ and a continuous function $\bar{v}_{n}^{\epsilon_n}: \hat{D}_n \to \pV$ such that $ \bar{v}_{n}^{\epsilon_n}\equiv \bar{v}_{n}$ on $\hat{K}_{n}^{\epsilon_n}$ and $\arrowvert(\hat{D}_n\setminus \hat{K}_{n}^{\epsilon_n}) \arrowvert < \epsilon_n$\,. 

Also, as in Theorem~\ref{TDPCP}, since $(\pV, d_{\mathscr{P}})$ is compact, for each $m\in \NN$ there exists a finite set $\widehat{\Lambda}_{m} = \{\mu_{m,1}, \mu_{m,2}, \dots , \mu_{m, k_m}\}$ and a quantizer $\widehat{Q}_{m}: \pV \to \widehat{\Lambda}_{m}$ which induces a partition $\{\widehat{U}_{m,i}\}_{i=1}^{k_m}$ of the space $\pV$\,. 

Now, for any $v\in\Um$ define $\hat{D}_{n,i}^m = (\bar{v}_{n}^{\epsilon_n})^{-1}(\widehat{U}_{m,i})$. It is easy to see that $\hat{D}_{n} = \cup_{i=1}^{k_m} \hat{D}_{n,i}^m$\,. Define
\begin{equation*}
\bar{v}_{n,m}^{\epsilon_n}(x) \df \sum_{i=1}^{k_m} \mu_{m,i}\Ind_{\{\hat{D}_{n,i}^m\}}(x)\quad\text{for all}\quad x\in \hat{D}_n\,\,\,\text{and}\,\,\,m\in \NN\,.
\end{equation*} Hence, as in the proof of Theorem~\ref{TDPCP} (see Step~$1$), for any $\hat{f}\in L^1(\Rd\times [0, \infty))\cap L^2(\Rd\times [0, \infty)), \hat{g}\in \cC_b(\Act)$, there exists a positive constant $\bar{M}_0$ (depending on $\hat{f}, \hat{g}$ and $\epsilon_n$) such that
\begin{align}\label{EBTM2}
\arrowvert \int_{\hat{D}_n}\hat{f}(x,t) \int_{\Act} \hat{g}(\zeta)\bar{v}_{n}(x,t)(\D \zeta)\D x \D t - \int_{\hat{D}_n}\hat{f}(x,t)\int_{\Act} \hat{g}(\zeta)\bar{v}_{n,m}^{\epsilon_n}(x,t)(\D \zeta)\D x \D t\arrowvert \leq \epsilon_n\,,
\end{align} for all $m\geq \bar{M}_0$\,.
%%%%%%%%%%%%%%%%%%%%%%%%%%%%%%%%%%%%%%%%%%%%%%%%%%%%%%%%%%%%%%%%%%%%%%%%%%%%%%%%%

Now, for any given $\epsilon > 0$, define 
\begin{equation}\label{EBTM3}
\Tilde{v}_{m}^{\epsilon} \df \sum_{n=1}^{\infty} \bar{v}_{n,m}^{\epsilon_n} \quad \text{for}\,\,\, m\in \NN\,. 
\end{equation} Since $\hat{f}\in\Lp^1(\Rd\times [0, \infty))$ there exists $N_0 \in \NN$ such that $\int_{\hat{\sB}_{N_0}^c}|\hat{f}(x,t)|\D x \D t < \frac{\epsilon}{4\|\hat{g}\|_{\infty}}$\,.
Then closely mimicking the argument of Theorem~\ref{TDPCP} (see Step~$2$), we have that for each $\epsilon >0$ there exists a positive constant $\hat{M}_0$ (depending on $\hat{f}, \hat{g}, \epsilon$) such that for all $m\geq \hat{M}_0$ 
\begin{equation}\label{EBTM4}
\arrowvert \int_{[0, \infty)}\int_{\Rd}\hat{f}(x,t)\int_{\Act} \hat{g}(\zeta)v(x,t)(\D \zeta)\D x \D t - \int_{[0, \infty)}\int_{\Rd}\hat{f}(x,t)\int_{\Act} \hat{g}(\zeta)\Tilde{v}_{m}^{\epsilon}(x,t)(\D \zeta)\D x \D t\arrowvert \leq \epsilon\,.
\end{equation}

Let $\{\hat{f}_k\}_{k\in\NN}$ and $\{h_{j}\}_{j\in\NN}$ be countable dense set in $\Lp^1(\Rd\times [0, \infty))$ and $\cC(\Act)$ respectively\,. Suppose that $f\in\Lp^{1}(\Rd\times [0, \infty))\cap \Lp^{2}(\Rd\times [0, \infty))$ and $g\in\cC_{b}(\Rd\times [0, \infty)\times \Act)$\,. Since $f\in \Lp^{1}(\Rd\times [0, \infty))$, for given $\epsilon > 0$ there exists $N_{1}\in \NN$ such that $\int_{\hat{\sB}_{N_1}^c} |f(x,t)|\D x \D t\leq \frac{\epsilon}{4\|g\|_{\infty}}$\,. We know that in $\cC_b(\bar{\hat{\sB}}_{N_1}\times \Act)$ the functions of the form   $\{\sum_{i}^{m} r_{i}(x,t)p_i(\zeta)\}_{m\in\NN}$ forms an algebra which contains constants, where $r_i\in \cC(\bar{\hat{\sB}}_{N_1})$ and $p_i \in \cC(\Act)$\,. Thus by Stone-Weierstrass theorem there exists $\hat{m}$ (large enough) such that 
\begin{equation}\label{EBTM5}
\sup_{\hat{\sB}_{N_1}\times \Act} |g(x,t,\zeta) - \sum_{i}^{\hat{m}} r_{i}(x,t)p_i(\zeta)| \leq \frac{\epsilon}{24\|f\|_{\Lp^1(\Rd\times [0, \infty))}}\,.
\end{equation}
Since $p_i \in \cC(\Act)$ one can choose $h_{j(i)} \in \cC(\Act)$ such that
\begin{equation}\label{EBTM6}
\sup_{\zeta\in \Act} |p_i(\zeta) - h_{j(i)}(\zeta)| \leq \frac{\epsilon}{24\|f\|_{\Lp^1(\Rd\times [0, \infty))}\|r_{i}\|_{\infty}}\,.
\end{equation}
Also, since $fr_i\in \Lp^1(\Rd\times [0, \infty))$ there exists $\hat{f}_{k(i)}$ such that
\begin{equation}\label{EBTM7}
\int_{\hat{\sB}_{N_1}} |f(x,t)r_i(x,t) - \hat{f}_{k(i)}(x,t)|\D x \D t \leq \frac{\epsilon}{24\|f\|_{\Lp^1(\Rd\times [0, \infty))}\|h_{i}\|_{\infty}}\,.
\end{equation} Thus, in view of \cref{EBTM5}, \cref{EBTM6}, \cref{EBTM7}, following the steps of Theorem~\ref{TDPCP} (see Step~$3$) we conclude that
\begin{align*}
\arrowvert \int_{0}^{\infty}\int_{\Rd}f(x,t)\int_{\Act} g(x,t,\zeta)v(x,t)(\D \zeta)\D x \D t - \int_{0}^{\infty}\int_{\Rd}f(x,t)\int_{\Act} g(x,t,\zeta)\bar{v}_{m}^{\epsilon}(x,t)(\D \zeta)\D x \D t\arrowvert 
\leq \epsilon\,,
\end{align*} for all $m\geq \hat{M}_1$, for some positive constant $\hat{M}_1$\,. This completes the proof of the theorem\,.
\end{proof}
\subsection*{Near Optimality of Piecewise Constant Policies for Finite Horizon Cost}

Now, from Theorem~\ref{TDPCMP} and Theorem~\ref{TContFHC}, we have the following near-optimality results\,.
\begin{theorem}\label{TFiniteOptApprox1}
Suppose that assumptions (A1),(A3) and (B1) hold. Then for any given $\epsilon>0$ there exists a piecewise constant policy $\bar{v}_{\epsilon}^* \in \Um$ such that
\begin{equation}\label{TFiniteOptApprox1A}
\cJ_{T}(x, \bar{v}_{\epsilon}^*) \leq \cJ_{T}^* + \epsilon \quad\text{for all} \quad x\in\Rd\,. 
\end{equation}
\end{theorem}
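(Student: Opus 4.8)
The plan is to deduce the statement from three facts already in hand: existence of an optimal Markov policy for the finite horizon problem under Assumptions (A1), (A3), (B1); denseness of piecewise constant Markov policies in $\Um$ under the Borkar topology (Theorem~\ref{TDPCMP}); and continuity of the finite horizon cost on $\Um$ (Theorem~\ref{TContFHC}). Given these, the argument is a soft one: approximate an optimal Markov policy by a Borkar-close piecewise constant Markov policy, and transfer near optimality through continuity.

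In detail, I would first recall from the verification discussion preceding Theorem~\ref{TContFHC} that the HJB equation admits a unique solution $\psi^*\in\Sob^{1,2,p,\mu}((0,T)\times\Rd)\cap\Lp^{\infty}((0,T)\times\Rd)$, $p\ge2$, and that applying the It\^o--Krylov formula to a measurable minimizing selector in the HJB equation yields a policy $v^*\in\Um$ with $\cJ_{T}(x,v^*)=\cJ_{T}^*(x)$ for every $x\in\Rd$. Next, I would apply Theorem~\ref{TDPCMP} to $v=v^*$ to obtain a sequence $\{v_m\}_m$ of piecewise constant Markov policies with $v_m\to v^*$ in $\Um$, i.e.\ satisfying \cref{BorkarTopology3}. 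Theorem~\ref{TContFHC} then gives $\cJ_{T}(x,v_m)\to\cJ_{T}(x,v^*)=\cJ_{T}^*(x)$, so for a given $\epsilon>0$ one fixes $m$ large and sets $\bar v_{\epsilon}^*:=v_m$, which is piecewise constant and satisfies \cref{TFiniteOptApprox1A}.

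The one point requiring care — and the main (if modest) obstacle — is that \cref{TFiniteOptApprox1A} is asserted for all $x\in\Rd$ with a single policy, while Theorem~\ref{TContFHC} is stated pointwise in $x$. To settle this I would reread the proof of Theorem~\ref{TContFHC}: the value functions $\psi_m(t,x)=\Exp_x^{v_m}\bigl[\int_t^{T}c(X_s,v_m(s,X_s))\D s+H(X_T)\bigr]$ satisfy the $m$-independent bounds $\norm{\psi_m}_{\infty}\le T\norm{c}_{\infty}+\norm{H}_{\infty}$ and $\norm{\psi_m}_{1,2,p,\mu}\le\kappa_2$; taking $p$ large and invoking the compact parabolic Sobolev embedding shows $\psi_m\to\psi^*$ locally uniformly on $[0,T]\times\Rd$, whence $\cJ_{T}(\cdot,v_m)=\psi_m(0,\cdot)\to\cJ_{T}^*(\cdot)$ uniformly on compact sets (and pointwise on all of $\Rd$), which suffices to extract the desired $\bar v_{\epsilon}^*$. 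The genuine analytic work — the Lusin/Tietze/Stone--Weierstrass construction of the approximating piecewise constant Markov policies and the parabolic-PDE stability estimates underpinning continuity — is already carried out in Theorems~\ref{TDPCMP} and~\ref{TContFHC}, so beyond this bookkeeping nothing further is needed.
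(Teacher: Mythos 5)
Your proposal is correct and follows essentially the same route as the paper: existence of an optimal Markov policy via the HJB verification argument preceding Theorem~\ref{TContFHC}, denseness of piecewise constant Markov policies (Theorem~\ref{TDPCMP}), and continuity of $v\mapsto\cJ_{T}(x,v)$ on $\Um$ (Theorem~\ref{TContFHC}). Your additional remark on upgrading pointwise to locally uniform convergence via the uniform $\Sob^{1,2,p,\mu}$ bounds is a point of care the paper's one-line proof does not even raise, and is a reasonable way to address the ``for all $x\in\Rd$'' phrasing of the statement.
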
 
\begin{proof}
From our previous discussion, we know that there exists $v^*\in \Um$ such that $\cJ_{T}(x, v^*) = \cJ_{T}^*$\,. Since the space of piecewise constant policies are dense in $\Um$ (see Theorem~\ref{TDPCMP}) and the map $v\mapsto \cJ_{T}(x, v)$ is continuous on $\Um$ (see Theorem~\ref{TContFHC}), for any given $\epsilon>0$, one can find a piecewise constant policy $\bar{v}_{\epsilon}^* \in \Um$ such that \cref{TFiniteOptApprox1A} holds\,.   
\end{proof}
\begin{remark}
In view of the existence results as in \cite[Chapter~4]{LSU67-book}, in obtaining the near optimality of piecewise constant Markov policies for finite horizon costs, one can relax the uniform boundedness assumption (B1), in particular, under (A1)-(A3) we can deduce similar results\,. Which extends the results of \cite{KD92}, \cite{HK-02A}, \cite{RF-16A} to a more general control model\,.
\end{remark}
%%%%%%%%%%%%%%%%%%%%%%%%%%%%%%%%%%%%%%%%%%%%%%%%%%%%%%%%%%%%%%%%%%%%%%%%%%%%%%%%
\section*{Conclusion}
We studied regularity properties of induced cost (under several criteria) on a controlled diffusion process with respect to a control policy space defined by Borkar \cite{Bor89}. We then studied implications of these properties on existence and, in particular, approximations for optimal controlled diffusions. Via such a unified approach, we arrived at very general approximation results for optimal control policies by quantized (finite action / piecewise constant) stationary control policies for a general class of controlled diffusions in the whole space $\Rd$\, as well as time-discretizations for the criteria with finite horizons. 

%\subsection*{Acknowledgement}
%%%%%%%%%%%%%%%%%%%%%%%%%%%%%%%%%%%%%%%%%%%%%%%%%%%%%%%%%%%%%%%%%%%%%%%%%%%%%%%%
%\bibliographystyle{abbrv}
\bibliography{Quantization}

\end{document}